\author{Adrien Boussicault, Simone Rinaldi and Samanta Socci}
\title{The number of directed $k$-convex polyominoes}
\newtheorem{theorem}{Theorem}
\newtheorem{proposition}[theorem]{Proposition}
\newtheorem{lemma}[theorem]{Lemma}
\newtheorem{corollary}[theorem]{Corollary}
\newcommand{\tmppp}{parallelogram polyomino}
\newcommand{\pp}{\tmppp}
\newcommand{\pps}{\tmppp es}
\newcommand{\SizeSquare}{1}
\newcommand{\LineWidth}{1}
\newcommand{\SizePoint}{.15*\SizeSquare}
\newcommand{\Point}[2]{(#1*\SizeSquare, #2*\SizeSquare)}
\newcommand{\AddSquare}[2]{
\draw[color=black, line width=\LineWidth] (#1*\SizeSquare-.5*\SizeSquare, #2*\SizeSquare-.5*\SizeSquare) -- (#1*\SizeSquare-.5*\SizeSquare, #2*\SizeSquare+.5*\SizeSquare) -- (#1*\SizeSquare+.5*\SizeSquare, #2*\SizeSquare+.5*\SizeSquare) -- (#1*\SizeSquare+.5*\SizeSquare, #2*\SizeSquare-.5*\SizeSquare) -- cycle;
}
\newcommand{\Ih}[4]{
\node[color=#4] at (#1*\SizeSquare, #2*\SizeSquare) {#3};
\filldraw[color=#4] ((#1*\SizeSquare+.5*\SizeSquare, #2*\SizeSquare+.5*\SizeSquare) circle (1.5*\SizePoint);
}
\newcommand\xleftrightarrow[2][]{%
  \ext@arrow 9999{\longleftrightarrowfill@}{#1}{#2}}
\newcommand\longleftrightarrowfill@{%
  \arrowfill@\leftarrow\relbar\rightarrow}
\begin{document}
\maketitle

\section{Introduction}

In the plane $\mathbb{Z}\times \mathbb{Z}$ a \emph{cell} is a unit square and a
\emph{polyomino} is a finite connected union of cells. Polyominoes are defined up to translations.
Since they have been introduced by Golomb \cite{golomb}, polyominoes have become quite popular combinatorial objects and have shown relations with many mathematical problems, such as tilings \cite{BN}, or games \cite{gardner} among many others.

Two of the most relevant combinatorial problems concern the enumeration of polyominoes according to their {\em area} (i.e., number of cells) or {\em semi-perimeter}. These two problems are both difficult to solve and still open. As a matter of fact, the
number $a_n$ of polyominoes with $n$ cells is known up to $n = 56$ \cite{jensen} and asymptotically, these
numbers satisfy the relation $\lim _n (a_n)^{1/n} = \mu, 3.96<\mu<4.64$, where the lower bound is a recent
improvement of \cite{BMRR}.

In order to probe further, several subclasses of polyominoes have been introduced on which to
hone enumeration techniques.
Some of these subclasses can be defined using the notions of connectivity and directedness: among them we recall the convex, directed, parallelogram polyominoes, which will be considered in this paper.
Formal definitions of these classes will be given in the next section.

In the literature, these objects have been widely studied using different techniques. Here, we just outline some results which will be useful for the reader of this paper:
\begin{description}
\item{(i)} The number $f_n$ of convex polyominoes with semi-perimeter $n \geq 2$ was obtained by Delest and Viennot in \cite{DV}, and it is:
$$f_{n+2} = (2n + 11)4^n - 4(2n + 1)\binom{2n}{n}, \qquad n \geq 0; \qquad f_0 = 1, \qquad f_1 = 2.$$
\item{(ii)} The number of directed convex polyominoes with semi-perimeter $n+2$ is
equal to ${{2n} \choose n}$ \cite{BousquetMelou96amethod,Bousquet1992,LinChang88}.
\item{(iii)} The number of parallelogram polyominoes with semi-perimeter $n+1$ is the $n$th {\em Catalan number} \cite{DV}.
\end{description}

\medskip

In this paper we present a new general approach for the enumeration of directed convex
polyominoes, which let us easily control several statistics.
This method relies on a bijection between directed convex
polyominoes with semi-perimeter equal to $n+2$ and triplets $(F_e,F_s,\lambda)$, where $F_e$ and $F_s$
are forests of $m_e$ and $m_s$ trees, respectively, with a total number of nodes equal to $n$, and $\lambda$ is a lattice path made of $m_e+1$ east and $m_s+1$ south unit steps (see Proposition \ref{prop:directed}).

Basing on this bijection, we develop a new method for the enumeration of directed convex polyominoes, according to several different parameters, including the semi-perimeter, the degree of convexity, the width, the height, the size of the last row/column and the number of corners. We point out that most of these statistics have already been considered in the literature (see, for instance \cite{BFR,BousquetMelou96amethod,LinChang88}), but what makes our method interesting, to our opinion, is that every statistic which can be read on the two forests $F_e$ and $F_s$, can be in principle computed. Basically, with $\cal C$ being a class of directed-convex polyominoes, our method consists of three steps:

\begin{enumerate}
\item provide a characterization of the two forests $F_e,F_s$ and of the path $\lambda$, for the polyominoes in $\cal C$;
\item determine the generating function -- according to the considered statistics -- of the forests $F_e,F_s$ and of the paths $\lambda$, for the polyominoes in $\cal C$;
\item obtain the generating function of $\cal C$ by means of the composition of the generating function of the paths $\lambda$ with the generating functions of the trees of $F_e,F_s$.
\end{enumerate}

Furthermore, the previously described bijection can be easily translated into a (new) bijection between directed convex polyominoes and Grand-Dyck paths (or bilateral Dyck paths). Other bijections between these two classes of objects can be found in the literature, see for instance, \cite{BFR} and \cite{Bousquet1992}.

\medskip

The most important and original result of the paper consists in applying our method to the enumeration of {\em directed $k$-convex polyominoes}, i.e. directed convex polyominoes which are also $k$-convex. Let us recall that in \cite{Castiglione2003290} it was proposed a classification of convex polyominoes based on the number of changes of direction in the paths connecting any two cells of a polyomino. More
precisely, a convex polyomino is {\itshape $k$-convex} if every pair of its cells can be connected by a monotone path
with at most $k$ changes of direction.

For $k=1$ we have the $L$-convex polyominoes, where any two cells can be connected by a path with at most one change of direction. In the recent literature $L$-convex polyominoes have been considered from several points of view: in \cite{Castiglione2003290} it is shown that they are a well-ordering according to the sub-picture order; in \cite{CFRR} the authors have investigated some tomographical aspects, and have discovered that $L$-convex polyominoes are uniquely determined by their horizontal and vertical projections. Finally, in \cite{lconv,CFRR2} it is proved that the number $l_n$ of $L$-convex polyominoes having semi-perimeter equal to $n + 2$ satisfies the recurrence relation
$$l_{n+2} = 4 l_{n+1} - 2 l_{n}, \qquad n\geq 1, \qquad l_0 = 1, \qquad l_1 = 2 \qquad  l_2 = 7.$$
For $k=2$ we have $2$-convex (or $Z$-convex) polyominoes, such that each two cells can be connected by a path with at most two changes of direction. Unfortunately, $Z$-convex polyominoes do not inherit most of the combinatorial properties of $L$-convex polyominoes. In particular, their enumeration resisted standard enumeration techniques and it was obtained in \cite{DRS} by applying the so-called {\em inflation method}.
The authors proved that the generating function of $Z$-convex polyominoes with
respect to the semi-perimeter is:
$$
\frac{2z^4(1-2z)^2d(z)}
{(1-4z)^2(1-3z)(1-z)}+\frac{z^2(1-6z+10z^2-2z^3-z^4)}
{(1-4z)(1-3z)(1-z)},
$$
where $
d(z)=\frac{1}{2} \left( 1-2z-\sqrt{1-4z} \right)
$
is the generating function of Catalan numbers \cite{DRS}. Hence the number of Z-convex polyominoes having semi-perimeter $n+2$ grows asymptotically as $\frac{n}{24} 4^n$.
However, the solution found for $2$-convex polyominoes seems to be not easily generalizable to a generic $k$, hence the problem of enumerating $k$-convex polyominoes for $k>2$ is still open and difficult to solve. Recently,
some efforts in the study of the asymptotic behavior of $k$-convex polyominoes have been made by Micheli and Rossin in \cite{MR}.

Thus, in order to simplify the problem of enumerating polyominoes satisfying the $k$-convexity constraint, the authors of \cite{BatFedRinSoc} provide a general method to enumerate an important subclass of $k$-convex polyominoes, that of $k$-parallelogram
polyominoes, i.e. $k$-convex polyominoes which are also parallelogram. In this case, the authors prove that the generating function is rational for any $k\geq 0$, and it can be expressed as follows:
\begin{equation}\label{eq:kparal}
z^2\left( \frac{F_{k+2}}{F_{k+3}} \right)^2 - z^2
\left( \frac{F_{k+2}}{F_{k+3}} - \frac{F_{k+1}}{F_{k+2}} \right)^2,
\end{equation}
where $F_k$ are the \emph{Fibonacci polynomials} are defined by the following recurrence relation
$$
F_0 = 0, \hspace{.5cm} F_1 = 1 \hspace{.5cm} \mbox{ and } \hspace{.5cm} F_{k+2} = F_{k+1} - z F_{k} \hspace{.5cm} \text{ for } \hspace{.5cm} k\ge0.
$$
At the end of \cite{BatFedRinSoc}  the authors observe that in \cite{BruKnuRic} it is shown that the generating function of trees with height less than or equal to $k$ can be also expressed in terms of the Fibonacci polynomials, and it is precisely equal to
\begin{equation}\label{eq:parallelogram}
\sum_{j\ge0} t_{j,\le k} z^j = \frac{z F_{k}}{F_{k+1}}.
\end{equation}
Thus, considering (\ref{eq:parallelogram}), it was proposed the problem of finding a bijection between $k$-parallelogram polyominoes and pairs of trees having height at most $k+2$, minus pairs of trees having both height exactly equal to $k+2$.

\medskip

As already mentioned, in this paper we succeed in solving this problem, as a specific sub-case of a more general framework: indeed we are able to provide a bijective proof for the number of directed $k$-convex polyominoes. In order to reach this goal, we rely on the observation that our bijection can be used to express the convexity degree of the polyomino in terms of the heights of the trees of the forests $F_e$ and $F_s$.
Basing on this observation, the application of our method eventually let us determine the generating function of directed $k$-convex polyominoes, for any $k\geq 1$. This is a rational function, which be suitably expressed, again, in terms of the
Fibonacci polynomials. More precisely, we prove that, for every $k\geq 1$, the generating function of directed $k$-convex polyominoes according to the semi-perimeter is equal to:
\begin{equation}\label{eq:kdirconv}
z^2\,  \left( \frac{F_{k+2}}{F_{2k+3}} \right)^2\,  F_{2k+2}.
\end{equation}
We point out that (\ref{eq:kdirconv}) is the first result in the literature concerning the $k$-convexity constraint on directed-convex polyominoes.  Due to the rationality of the generating function we can then apply standard techniques to provide the asymptotic behavior for the number of directed $k$-convex polyominoes.

Moreover, if we restrict ourselves to the case of parallelogram polyominoes, we observe that our bijection reduces to a bijection between parallelogram polyominoes with semi-perimeter and pairs of trees, and this gives a simple tool to present a combinatorial proof for (\ref{eq:kparal}).

\section{Notation and preliminaries}
In this section we recall some basic definitions about polyominoes, and other combinatorial objects, which will be used in the rest of the paper. 
First, we point out that, in the representaion of polyominoes, we use the convention that the south-west corner of the minimal bounding rectangle is placed at $(0,0)$. A \emph{column} (\emph{row}) of a polyomino is the intersection of the polyomino and an infinite strip of cells whose centers lie on a vertical (horizontal) line. A polyomino is said to be \emph{column-convex} (\emph{row-convex}) when its intersection with any vertical (horizontal) line is connected. Figure~\ref{polyominoes}$(b)$, $(c)$ show a column convex and a row convex polyomino, respectively. A polyomino is \emph{convex} if it is both column and row convex (see Fig.~\ref{polyominoes}$(d)$).
The \emph{semi-perimeter} of a polyomino is naturally defined as half the perimeter, while the \emph{area} is the number of its cells.
As a matter of fact, the semi-perimeter of a convex polyomino is given by the sum of the number of its rows and its columns.

\begin{figure}[htb]
\begin{center}
\includegraphics[width=.7\textwidth]{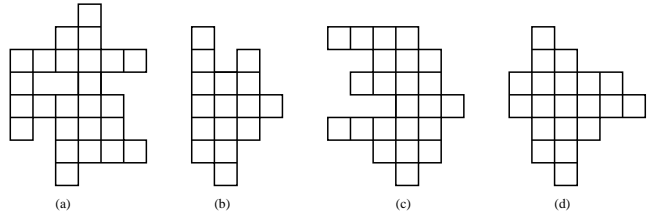}
\caption{Examples of polyominoes. \label{polyominoes}}
\end{center}
\end{figure}

Let $P$ be a convex polyomino whose minimal bounding rectangle has dimension 
$l_1 \times l_2$. We number the $l_1$ columns and the $l_2$ rows from left to 
right and from bottom to top, respectively. 
Thus, we consider the bottom 
(resp. top) row of $P$ as its first (resp. last) row, and the leftmost 
(resp. rightmost) column of $P$ as its first (resp. last) column.
By convention, we will often write $(i,j)$ to denote the cell of $P$, whose 
north-west corner has coordinates equal to $(i,j)$.
With that convention, the cell is at the intersection of the $i$-th column and 
the $j$-th row.

A \emph{path} is a self-avoiding sequence of unit steps of four types: north 
$n=(0,1)$, south $s=(0,-1)$, east $e=(1,0)$, and west $w=(-1,0)$. 
A path is said to be \emph{monotone} if it is comprised only of steps of two
types. 

A polyomino is said to be \emph{directed} when each of its cells can be reached from a distinguished cell, called the root (denoted by $S$), by a path which is contained in the polyomino and uses only north and east unit steps. 
A polyomino is \emph{directed convex} if it is both directed and convex. 

We recall that a \emph{\pp} is a polyomino whose boundary can be decomposed in two paths, the upper and the lower paths, which are comprised of north and east unit steps and meet only at their starting and final points. 
It is clear that parallelogram polyominoes are also directed convex polyominoes, while the converse does not hold.

Figure~\ref{parallelograms} shows all directed convex polyominoes with semi-perimeter equal to $4$, where only the rightmost polyomino is not a parallelogram one.

\begin{figure}[htb]
\begin{center}
\includegraphics[width=.5\textwidth]{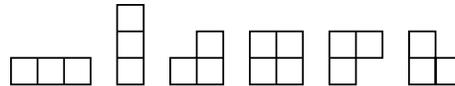}
\end{center}
\caption{The six directed convex polyominoes with semi-perimeter $4$. \label{parallelograms}}
\end{figure}

Now, we introduce another class of objects, which will be useful in this paper.
An \emph{ordered tree} is a rooted tree for which an ordering is specified for the children of each vertex. In this paper we shall say simply \emph{tree} instead of \emph{ordered tree}. The \emph{size} of a tree is the number of nodes.
The \emph{height} $h(T)$ of a tree $T$ is the number of nodes on a maximal simple path starting at the root of $T$.

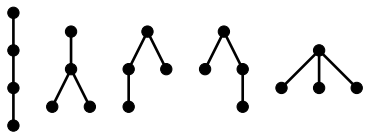
\begin{figure}[!ht]
$$
\begin{array}{c}
\begin{tikzpicture}[scale=.5]
\draw[color=black, line width=\LineWidth] (0,0) -- (0, -1);
\draw[color=black, line width=\LineWidth] (0,-1) -- (0, -2);
\draw[color=black, line width=\LineWidth] (0,-2) -- (0, -3);

\foreach \x/\y in { 0/0, 0/-1, 0/-2, 0/-3 }{
    \filldraw[color=black] (\x, \y) circle (\SizePoint);
}
\end{tikzpicture}
\end{array}
\begin{array}{c}
\begin{tikzpicture}[scale=.5]
\draw[color=black, line width=\LineWidth] (0,0) -- (0, -1);
\draw[color=black, line width=\LineWidth] (0,-1) -- (-0.5, -2);
\draw[color=black, line width=\LineWidth] (0,-1) -- (0.5, -2);

\foreach \x/\y in { 0/0, 0/-1, 0.5/-2, -0.5/-2 }{
    \filldraw[color=black] (\x, \y) circle (\SizePoint);
}
\end{tikzpicture}
\end{array}
\begin{array}{c}
\begin{tikzpicture}[scale=.5]
\draw[color=black, line width=\LineWidth] (-0.5,-1) -- (-0.5, -2);
\draw[color=black, line width=\LineWidth] (0,0) -- (-0.5, -1);
\draw[color=black, line width=\LineWidth] (0,0) -- (0.5, -1);

\foreach \x/\y in { 0/0, 0.5/-1, -0.5/-1, -0.5/-2 }{
    \filldraw[color=black] (\x, \y) circle (\SizePoint);
}
\end{tikzpicture}
\end{array}
\begin{array}{c}
\begin{tikzpicture}[scale=.5]
\draw[color=black, line width=\LineWidth] (0.5,-1) -- (0.5, -2);
\draw[color=black, line width=\LineWidth] (0,0) -- (-0.5, -1);
\draw[color=black, line width=\LineWidth] (0,0) -- (0.5, -1);

\foreach \x/\y in { 0/0, 0.5/-1, -0.5/-1, 0.5/-2 }{
    \filldraw[color=black] (\x, \y) circle (\SizePoint);
}
\end{tikzpicture}
\end{array}
\begin{array}{c}
\begin{tikzpicture}[scale=.5]
\draw[color=black, line width=\LineWidth] (0,0) -- (-1, -1);
\draw[color=black, line width=\LineWidth] (0,0) -- (0, -1);
\draw[color=black, line width=\LineWidth] (0,0) -- (1, -1);

\foreach \x/\y in { 0/0, 1/-1, 0/-1, -1/-1 }{
    \filldraw[color=black] (\x, \y) circle (\SizePoint);
}
\end{tikzpicture}
\end{array}
$$
\caption{Ordered trees of size $4$. \label{fig:ordered_tree_size_3}}
\end{figure}

All trees with size equal to $4$ are shown in Fig.~\ref{fig:ordered_tree_size_3}, while  Figure~\ref{fig:ordered_tree_heigt_4} depicts some trees with height equal to $4$.

\begin{figure}[!ht]
$$
\begin{array}{c}
\begin{tikzpicture}[scale=.5]
\draw[color=black, line width=\LineWidth] (0,0) -- (0, -1);
\draw[color=black, line width=\LineWidth] (0,-1) -- (0, -2);
\draw[color=black, line width=\LineWidth] (0,-2) -- (0, -3);

\foreach \x/\y in { 0/0, 0/-1, 0/-2, 0/-3 }{
    \filldraw[color=black] (\x, \y) circle (\SizePoint);
}
\end{tikzpicture}
\end{array}
\begin{array}{c}
\begin{tikzpicture}[scale=.5]
\draw[color=black, line width=\LineWidth] (0,1) -- (0, 0);
\draw[color=black, line width=\LineWidth] (-0.5,-1) -- (-0.5, -2);
\draw[color=black, line width=\LineWidth] (0,0) -- (-0.5, -1);
\draw[color=black, line width=\LineWidth] (0,0) -- (0.5, -1);

\foreach \x/\y in { 0/1, 0/0, 0.5/-1, -0.5/-1, -0.5/-2 }{
    \filldraw[color=black] (\x, \y) circle (\SizePoint);
}
\end{tikzpicture}
\end{array}
\begin{array}{c}
\begin{tikzpicture}[scale=.5]
\draw[color=black, line width=\LineWidth] (0,0) -- (-1, -1);
\draw[color=black, line width=\LineWidth] (0,0) -- (0, -1);
\draw[color=black, line width=\LineWidth] (0,0) -- (1, -1);
\draw[color=black, line width=\LineWidth] (-1,-1) -- (-1, -2);
\draw[color=black, line width=\LineWidth] (1,-1) -- (1, -2);
\draw[color=black, line width=\LineWidth] (1,-2) -- (0.5, -3);
\draw[color=black, line width=\LineWidth] (1,-2) -- (1.5, -3);

\foreach \x/\y in { 0/0, -1/-1, 0/-1, 1/-1, -1/-1, -1/-2, 1/-2, .5/-3, 1.5/-3 }{
    \filldraw[color=black] (\x, \y) circle (\SizePoint);
}
\end{tikzpicture}
\end{array}
$$
\caption{Examples of ordered trees with height $4$ \label{fig:ordered_tree_heigt_4}}
\end{figure}
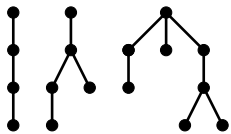

Now, let $t_{n,\le k}$ be the number of trees of size $n$ and height less than or equal to $k$.
In \cite{BruKnuRic} the generating function $\mathcal{T}_{\le k}$ of trees with height less than or equal to $k$ is given by N.G. de Bruijn, D.E. Knuth and S.O. 
Rice:
$$
\mathcal{T}_{\le k} = \sum_{j\ge0} t_{j,\le k} z^j = \frac{z F_{k}}{F_{k+1}}
$$
where $F_{k}$ are the Fibonacci polynomials.
The \emph{Fibonacci polynomials} are defined by the following recurrence relation:
$$
F_0 = 0, \hspace{.5cm} F_1 = 1 \hspace{.5cm} \mbox{ and } \hspace{.5cm} F_{k+2} = F_{k+1} - z F_{k} \hspace{.5cm} \text{ for } \hspace{.5cm} k\ge0.
$$

This result can be simply explained by observing that a tree with height less than or equal to $k+1$ can be obtained by taking a root node and attaching zero or more subtrees each of which has height less than or equal to $k$.
Hence, $$\mathcal{T}_{\leq k+1} = \sum_{j\ge 0} z ( \mathcal{T}_{\leq k} )^j = \frac{z}{1- \mathcal{T}_{\leq k}} .$$
Now, we can  easily check that $\mathcal{T}_{\leq k} = \frac{z F_k}{F_{k+1}}$ is the correct solution.

Since $F_k$ is a sequence of the first order, it is easy to obtain the formula:
$$
F_k \, = \, \frac{1}{\sqrt{1-4z}} 
\left(
	\left(
		\frac{ 1+\sqrt{1-4z} }{ 2 }
	\right)^k
	-
	\left(
		\frac{ 1-\sqrt{1-4z} }{ 2 }
	\right)^k
\right).
$$

We define by $\mathcal{T}_{=k}$ the generating function of trees 
with height equal to $k$.
Easily we can obtain $\mathcal{T}_{=k}$ as the difference between the generating function $\mathcal{T}_{\leq k}$ of the class of trees with height less than or equal to $k$ and the generating function $\mathcal{T}_{\leq k-1}$ of the class of trees having height less than or equal to $k-1$; namely, $\mathcal{T}_{=k} = \mathcal{T}_{\leq k} - \mathcal{T}_{\leq k-1}$.

It is possible to obtain $\mathcal{T}_{=k}$ in a different way:
let  $T$ be a tree having height equal to $k$, and denote by $e$ the rightmost node of $T$ having height $k$ with respect to the root of $T$. We consider the maximal chain starting at $e$ and ending at the root of $T$.
Now each node of this chain (different from the node $e$) having height equal to $k-i+1$ (with $i=2,\ldots k$) can be considered the root of two trees having heights $i$ and $i-1$, respectively (see Figure~\ref{fig:tree_height_k}). This argument leads to a bijection between trees with height equal to $k$ and the sequence $(T_1,T_2,\ldots,T_{2(k-1)})$ of trees such that $T_{i}$ and $T_{i+1}$ are trees having height $i$ and $i-1$, respectively. Hence, 
$$
\mathcal{T}_{=k}
=
z\prod_{i=2}^k \mathcal{T}_{\leq i} \mathcal{T}_{\leq i-1} z^{-1}
=
z\prod_{i=2}^{k} z \frac{\cancel{F_i}}{F_{i+1}} \frac{F_{i-1}}{\cancel{F_i}}
=
\frac{z^k}{F_{k}F_{k+1}}.
$$

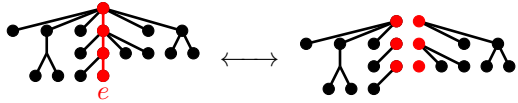
\begin{figure}[!ht]
\centering 
$
\begin{array}{c}
\begin{tikzpicture}[scale=.5]
\newcommand{\sizeGridTree}{.3}
\draw[color=black, line width=\LineWidth] (-3*\sizeGridTree,4*\sizeGridTree) -- (5*\sizeGridTree, 6*\sizeGridTree);
\draw[color=black, line width=\LineWidth] (-1*\sizeGridTree,0*\sizeGridTree) -- (0*\sizeGridTree, 2*\sizeGridTree);
\draw[color=black, line width=\LineWidth] (1*\sizeGridTree,0*\sizeGridTree) -- (0*\sizeGridTree,2*\sizeGridTree);
\draw[color=black, line width=\LineWidth] (3*\sizeGridTree,0*\sizeGridTree) -- (5*\sizeGridTree,2*\sizeGridTree);
\draw[color=black, line width=\LineWidth] (0*\sizeGridTree,2*\sizeGridTree) -- (0*\sizeGridTree,4*\sizeGridTree);
\draw[color=black, line width=\LineWidth] (3*\sizeGridTree,2*\sizeGridTree) -- (5*\sizeGridTree,4*\sizeGridTree);
\draw[color=black, line width=\LineWidth] (7*\sizeGridTree,2*\sizeGridTree) -- (5*\sizeGridTree,4*\sizeGridTree);
\draw[color=black, line width=\LineWidth] (9*\sizeGridTree,2*\sizeGridTree) -- (5*\sizeGridTree,4*\sizeGridTree);
\draw[color=black, line width=\LineWidth] (11*\sizeGridTree,2*\sizeGridTree) -- (12*\sizeGridTree,4*\sizeGridTree);
\draw[color=black, line width=\LineWidth] (13*\sizeGridTree,2*\sizeGridTree) -- (12*\sizeGridTree,4*\sizeGridTree);
\draw[color=black, line width=\LineWidth] (0*\sizeGridTree,4*\sizeGridTree) -- (5*\sizeGridTree,6*\sizeGridTree);
\draw[color=black, line width=\LineWidth] (3*\sizeGridTree,4*\sizeGridTree) -- (5*\sizeGridTree,6*\sizeGridTree);
\draw[color=black, line width=\LineWidth] (9*\sizeGridTree,4*\sizeGridTree) -- (5*\sizeGridTree,6*\sizeGridTree);
\draw[color=black, line width=\LineWidth] (12*\sizeGridTree,4*\sizeGridTree) -- (5*\sizeGridTree,6*\sizeGridTree);

\foreach \x/\y in { -1/0, 1/0, 3/0, 5/0, 3/2, 5/2, 7/2, 9/2, 11/2, 13/2, 0/4, 3/4, 5/4, 9/4, 12/4, 5/6, -3/4 }{
    \filldraw[color=black] (\x*\sizeGridTree, \y*\sizeGridTree) circle (\SizePoint);
}

\draw[color=red, line width=\LineWidth] (5*\sizeGridTree,0*\sizeGridTree) -- (5*\sizeGridTree,2*\sizeGridTree);
\draw[color=red, line width=\LineWidth] (5*\sizeGridTree,2*\sizeGridTree) -- (5*\sizeGridTree,4*\sizeGridTree);
\draw[color=red, line width=\LineWidth] (5*\sizeGridTree,4*\sizeGridTree) -- (5*\sizeGridTree,6*\sizeGridTree);

\foreach \x/\y in { 5/0, 5/2, 5/4, 5/6 }{
    \filldraw[color=red] (\x*\sizeGridTree, \y*\sizeGridTree) circle (\SizePoint);
}

\node[color=red, below=1pt] at (5*\sizeGridTree,0*\sizeGridTree) {$e$}; 
\end{tikzpicture}
\end{array}
\xleftrightarrow{\hspace{.5cm}}{}
\begin{array}{c}
\begin{tikzpicture}[scale=.5]
\newcommand{\sizeGridTree}{.3}
\draw[color=black, line width=\LineWidth] (-3*\sizeGridTree,4*\sizeGridTree) -- (5*\sizeGridTree, 6*\sizeGridTree);
\draw[color=black, line width=\LineWidth] (-1*\sizeGridTree,0*\sizeGridTree) -- (0*\sizeGridTree, 2*\sizeGridTree);
\draw[color=black, line width=\LineWidth] (1*\sizeGridTree,0*\sizeGridTree) -- (0*\sizeGridTree,2*\sizeGridTree);
\draw[color=black, line width=\LineWidth] (3*\sizeGridTree,0*\sizeGridTree) -- (5*\sizeGridTree,2*\sizeGridTree);
\draw[color=black, line width=\LineWidth] (0*\sizeGridTree,2*\sizeGridTree) -- (0*\sizeGridTree,4*\sizeGridTree);
\draw[color=black, line width=\LineWidth] (3*\sizeGridTree,2*\sizeGridTree) -- (5*\sizeGridTree,4*\sizeGridTree);
\draw[color=black, line width=\LineWidth] (9*\sizeGridTree,2*\sizeGridTree) -- (7*\sizeGridTree,4*\sizeGridTree);
\draw[color=black, line width=\LineWidth] (11*\sizeGridTree,2*\sizeGridTree) -- (7*\sizeGridTree,4*\sizeGridTree);
\draw[color=black, line width=\LineWidth] (13*\sizeGridTree,2*\sizeGridTree) -- (14*\sizeGridTree,4*\sizeGridTree);
\draw[color=black, line width=\LineWidth] (15*\sizeGridTree,2*\sizeGridTree) -- (14*\sizeGridTree,4*\sizeGridTree);
\draw[color=black, line width=\LineWidth] (0*\sizeGridTree,4*\sizeGridTree) -- (5*\sizeGridTree,6*\sizeGridTree);
\draw[color=black, line width=\LineWidth] (3*\sizeGridTree,4*\sizeGridTree) -- (5*\sizeGridTree,6*\sizeGridTree);
\draw[color=black, line width=\LineWidth] (11*\sizeGridTree,4*\sizeGridTree) -- (7*\sizeGridTree,6*\sizeGridTree);
\draw[color=black, line width=\LineWidth] (14*\sizeGridTree,4*\sizeGridTree) -- (7*\sizeGridTree,6*\sizeGridTree);

\foreach \x/\y in { -1/0, 1/0, 3/0, 3/2, 5/2, 9/2, 11/2, 13/2, 15/2, 0/4, 3/4, 5/4, 11/4, 14/4, 5/6, -3/4 }{
    \filldraw[color=black] (\x*\sizeGridTree, \y*\sizeGridTree) circle (\SizePoint);
}

\foreach \x/\y in { 5/2, 5/4, 5/6 }{
    \filldraw[color=red] (\x*\sizeGridTree, \y*\sizeGridTree) circle (\SizePoint);
}
\foreach \x/\y in { 7/2, 7/4, 7/6 }{
    \filldraw[color=red] (\x*\sizeGridTree, \y*\sizeGridTree) circle (\SizePoint);
}
\end{tikzpicture}
\end{array}
$
\caption{Split a tree with height $k$ in $2(k\!-\!1)$ trees.\label{fig:tree_height_k}}
\end{figure}

Now, using the previous argument, we provide a different proof for the result of Proposition \ref{prop:two_way_fk} (already obtained in $\cite{BruKnuRic}$).
\begin{proposition}
\label{prop:two_way_fk}
Let $F_k$ be the Fibonacci polynomials. We have:
$$
F_k^2 - F_{k+1}F_{k-1} = z^{k-1}.
$$
\end{proposition}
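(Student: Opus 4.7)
The plan is to identify two different expressions for $\mathcal{T}_{=k}$, the generating function of trees of height exactly $k$, and equate them. The identity $F_k^2 - F_{k+1}F_{k-1} = z^{k-1}$ then drops out by clearing denominators.

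First, I would write $\mathcal{T}_{=k}$ as a telescoping difference. Since a tree has height at most $k$ iff it has height at most $k-1$ or exactly $k$, we have $\mathcal{T}_{=k} = \mathcal{T}_{\leq k} - \mathcal{T}_{\leq k-1}$. Substituting the closed form $\mathcal{T}_{\leq k} = zF_k/F_{k+1}$ of de Bruijn--Knuth--Rice and putting the two fractions over the common denominator $F_k F_{k+1}$ yields
$$\mathcal{T}_{=k} \;=\; \frac{zF_k}{F_{k+1}} - \frac{zF_{k-1}}{F_k} \;=\; \frac{z\bigl(F_k^2 - F_{k+1}F_{k-1}\bigr)}{F_k F_{k+1}}.$$

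Second, I would invoke the combinatorial expression for $\mathcal{T}_{=k}$ already obtained just before the proposition by the splitting bijection (Figure~\ref{fig:tree_height_k}), namely
$$\mathcal{T}_{=k} \;=\; \frac{z^k}{F_k F_{k+1}}.$$

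Finally, equating the two expressions and cancelling the common factor $z/(F_k F_{k+1})$ yields $F_k^2 - F_{k+1}F_{k-1} = z^{k-1}$, which is exactly the claim. There is no real obstacle: the heavy lifting was done in deriving the product formula $\mathcal{T}_{=k} = z^k/(F_k F_{k+1})$ via the bijection that splits a height-$k$ tree along its rightmost longest branch into $2(k-1)$ trees of prescribed heights; once that formula is in hand, the identity is a two-line algebraic comparison. The only care required is in the boundary cases ($k=1$, where $F_0 = 0$ forces a check that both sides equal $1$ once interpreted correctly), but this follows immediately from $F_1 = 1$, $F_2 = 1$, and $F_0 = 0$.
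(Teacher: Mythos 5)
Your proof is correct and follows essentially the same route as the paper: both equate the telescoping expression $\mathcal{T}_{=k}=\mathcal{T}_{\le k}-\mathcal{T}_{\le k-1}=\frac{zF_k}{F_{k+1}}-\frac{zF_{k-1}}{F_k}$ with the product formula $\mathcal{T}_{=k}=\frac{z^k}{F_kF_{k+1}}$ from the splitting bijection and clear denominators. Your extra remark on the boundary case $k=1$ is a harmless addition the paper omits.
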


\begin{proof}
From the previous result we know that $\mathcal{T}_{=k}=\mathcal{T}_{\leq k}-\mathcal{T}_{\leq k-1}=\frac{z^k}{F_{k}F{k+1}}$, whence $\frac{zF_{k}}{F_{k+1}} - \frac{zF_{k-1}}{F_{k}}=\frac{z^k}{F_{k}F_{k+1}}$. So we can conclude the statement.
\end{proof}

We recall that an \emph{ordered forest} (briefly a \emph{forest}) is a $t$-uple of ordered trees.
The size of a forest is the sum of the sizes of its trees. It is known that the forests of size $n$ are in bijection with the trees of size $n+1$: to uniquely obtain a forest from a tree, we just need to remove the root of the tree.

\section{A bijective construction for directed convex polyominoes}

We start presenting a map $\Phi$, which builds a pair of forests from a given parallelogram polyomino. We point out that the mapping $\Phi$ is borrowed from \cite{abbs14}. 

In a parallelogram polyomino $P$, let $V_P$ be the set of dots defined as follows:
\begin{itemize}
\item we enlighten $P$ from east to west and from north to south;
\item we put a dot in the enlightened cells of $P$ except for the rightmost cell (denoted by $E$) of the top row of $P$.
\end{itemize}

\noindent We define the pair of forests $\Phi(P)=(F_e,F_s)$ using the following rules:
\begin{itemize}
\item the set of nodes of $F_e$ and $F_s$ is $V_P$ and the roots of the trees of $F_e$ (resp. $F_s$) are the dots in the cells of the top row (resp. rightmost column) of $P$;
\item in a row (resp. column) of $P$, except for the top row and the rightmost column, the rightmost (resp. topmost) node is the 
father of each other node in the same row (resp. column), and these nodes are brothers ordered from east to west (resp. north to south);
\end{itemize}

Figure~\ref{phi} shows an example of $\Phi$. Let the size of a pair $(F_e,F_s)$ of forests be given by the sum of the sizes of $F_e$ and $F_s$.
\begin{proposition}
\label{prop:bijection_pp_pair}
The map $\Phi$ is a bijection between \pps\ with semi-perimeter $n$ and pairs of 
forests with size $n-2$.
\end{proposition}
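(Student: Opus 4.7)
My plan is to prove $\Phi$ is a bijection by constructing an explicit inverse $\Psi:(F_e,F_s)\mapsto P$ and checking that both compositions $\Phi\circ\Psi$ and $\Psi\circ\Phi$ are identities. The key intermediate facts are a cardinality match $|V_P|=n-2$, the well-definedness of the parent assignments that make up $\Phi$, and a reconstruction rule for $\Psi$.

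First I would verify $|V_P|=n-2$. Writing $r$ and $c$ for the numbers of rows and columns (so $n=r+c$), each row contributes a unique rightmost cell and each column a unique topmost cell, giving $r+c$ enlightened cells counted with multiplicity. I claim the only cell counted twice is $E=(c,r)$: if $(i,j)$ were both rightmost in row $j$ and topmost in column $i$ with $i<c$, then the connectivity of the parallelogram polyomino together with the monotonicity of its upper boundary path would force $(i+1,j)\in P$, contradicting the rightmost-in-row assumption; the symmetric argument rules out $j<r$. Hence we count $r+c-1$ distinct enlightened cells and, after removing $E$, obtain exactly $n-2$ dots.

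Next I would check that $\Phi$ produces well-defined forests. Every dot other than $E$ is either of \emph{type R} (rightmost in its row but not topmost in its column) or of \emph{type T} (topmost in its column but not rightmost in its row) by the previous uniqueness argument. A type-T dot is assigned a parent by the row rule (the rightmost dot of its row), and a type-R dot by the column rule (the topmost dot of its column). Following parents strictly moves the underlying cell east (on a type-T step) or strictly north (on a type-R step), so no cycle can form; every chain terminates at either a top-row dot (a root of $F_e$) or a rightmost-column dot (a root of $F_s$), thereby partitioning $V_P$ into the two forests and guaranteeing that each is indeed a forest.

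Finally, to define $\Psi$, I would reconstruct $P$ from a pair $(F_e,F_s)$ of total size $n-2$ as follows. The ordered roots of $F_e$, together with $E$, form the top row of $P$; symmetrically, the ordered roots of $F_s$, together with $E$, form the rightmost column. Each non-root dot is placed by following its alternating type-T/type-R ancestry chain back to a root, which determines the dot's row and column coordinates. The main obstacle will be showing that $\Psi(F_e,F_s)$ is always a valid parallelogram polyomino, i.e.\ that the resulting cells form a connected region whose upper and lower boundaries are monotone paths in $\{N,E\}^*$. I expect this to follow from the fact that the sibling orderings imposed by $\Phi$ (east-to-west in $F_e$ and north-to-south in $F_s$) exactly encode the coordinate data needed to enforce monotonicity. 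Once $\Psi$ is well-defined, the identities $\Phi\circ\Psi=\mathrm{id}$ and $\Psi\circ\Phi=\mathrm{id}$ follow by tracing through the two constructions and checking cell-by-cell.
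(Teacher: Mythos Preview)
Your verification that $|V_P|=n-2$ and that $\Phi$ is well-defined tracks the paper's argument closely (and is in fact more explicit about why only $E$ is simultaneously rightmost-in-row and topmost-in-column). Where you diverge is in establishing bijectivity. You propose building an explicit inverse $\Psi$ and checking both compositions; the paper instead proves injectivity directly---two distinct parallelogram polyominoes differ at some first step of an upper or lower boundary path, which forces a discrepancy in the number of children at the corresponding node---and then obtains surjectivity by a pure counting argument: pairs of forests of total size $n-2$ are equinumerous with pairs of trees of total size $n$, hence with single trees of size $n$, and these are already known (via Catalan numbers) to be equinumerous with parallelogram polyominoes of semi-perimeter $n$.

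The paper's route is shorter precisely because it sidesteps what you correctly flag as ``the main obstacle'': verifying that the cell placement produced by $\Psi(F_e,F_s)$ is always a bona fide parallelogram polyomino. Your route has the advantage of being self-contained (no appeal to the external Catalan enumeration) and of producing an explicit inverse, which is valuable when one later wants to transport statistics back and forth. But note that the obstacle is genuine and your sketch does not yet resolve it: ``following the alternating ancestry chain back to a root'' determines only one coordinate of each dot at a time, and showing that the resulting set of enlightened cells extends to a region bounded by two non-crossing monotone paths requires a careful inductive or recursive argument that you have not supplied. If you pursue this direction, one clean way is to build $P$ column by column (or row by row) from the forests, maintaining as an invariant that the partial upper and lower boundaries remain monotone and non-crossing.
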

\begin{proof}
By definition of $\Phi$, for each node $x$ we have:

\begin{itemize}
\item if $x$ is one of nodes of the top (resp. rightmost) row (resp. column) then $x$ is the root of a tree in $F_e$ (resp. $F_s$);
\item otherwise, by construction, $x$ has exactly one father, in fact: there is exactly one topmost node in the same column of $x$ or 
one rightmost node in the same row of $x$, but not both at the same time. 
\end{itemize}

Since a father always lies above or to the right of its sons, there
is no cycle in the graph obtained by means of $\Phi$
and the only nodes without fathers are the nodes of the top row and the rightmost column of $P$, $\Phi$ produces two ordered forests $F_e$ and $F_s$.
The size of $(F_e,F_s)$ is equal to the number of dots enlightened in the \pp\, which is exactly equal to the semi-perimeter of the \pp\ minus two.
Now we will prove that $\Phi$ is injective.
Let $P_1$ and $P_2$ be two \pps\ such that $P_1\neq P_2$. Since $P_1$ is different from $P_2$ then there exists a first step of the lower path (or the upper path) of $P_1$ different to the corresponding step in $P_2$ and so in one of the two forests of $\Phi(P_1)$ (or of $\Phi(P_2)$) we can find a father with more sons that the corresponding father in the corresponding tree of $\Phi(P_2)$ (or of $\Phi(P_1)$). We deduce that $\Phi(P_1) \neq \Phi(P_2)$.
Now we show surjectivity.
Since forests of size $n$ are bijective to trees of size $n+1$, then pairs of forests of size $n$ are bijective to pairs of trees with size $n+2$. Now, pairs of trees with size $n+2$ are in bijection with trees of size $n+2$ (see Fig.~\ref{phi2}). Since it is known that \pps\ and ordered trees are equinumerous \cite{Stanley}, we can conclude that $\Phi$ is a bijection.
\end{proof}

\begin{center}
\begin{figure}[htb]
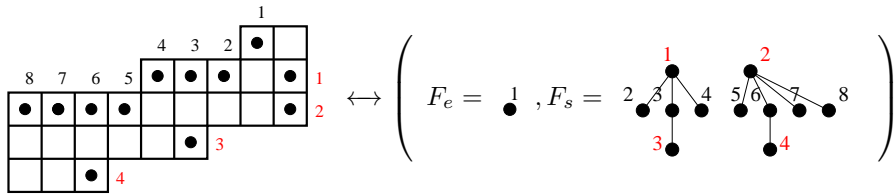

$
\begin{array}{c}
\includegraphics[width=.35\linewidth]{phiA}
\end{array}
\xleftrightarrow{\hspace{.2cm}}
\left(
\begin{array}{r}
F_e = 
\begin{array}{c}
\includegraphics[width=.02\linewidth]{phiB}
\end{array}
,
F_s =
\begin{array}{c}
\includegraphics[width=.25\linewidth]{phiC}
\end{array}
\end{array}
\right)
$
\caption{The bijection $\Phi$.}\label{phi}
\end{figure}
\end{center}

\begin{figure}[htb]
\begin{center}
\includegraphics[width=.75\textwidth]{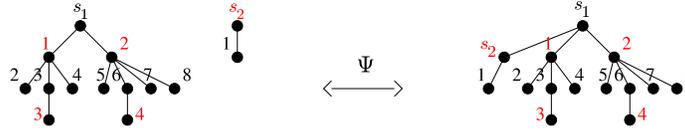}
\end{center}
\caption{The bijection $\Psi$ between parallelogram polyominoes and pairs of forests. \label{phi2}}
\end{figure}

Given a parallelogram polyomino $P$, we denote by $\alpha(P)$ (resp. $\beta(P)$) the number of cells in the top row (resp. rightmost column) of $P$ minus one.

Given a directed convex polyomino $D$ with a minimal bounding rectangle of size
$l_1 \times l_2$, we define $P_D$ to be the parallelogram polyomino obtained 
from $D$, extending the side of $D$ with ordinate equal to $l_2$ in the east 
direction to the point with coordinate $(l_1,l_2)$ and extending the side of 
$D$ with abscissa equal to $l_1$ in north direction to the point with
coordinate $(l_1,l_2)$(see Fig.~\ref{fig:PD}).

For instance, for the parallelogram polyomino $P_D$ in Fig.~\ref{fig:PD} we have $\alpha(P_D)=5$ and $\beta(P_D)=4$.

A monotone path starting with an east step and ending with a south step is said to be a \emph{cut of $P$} if 
the number of east steps is equal to $\alpha(P)+1$ and the number of south
steps is equal to $\beta(P)+1$.

We can easily check that in a directed convex polyomino $D$, the path $\lambda_D$ starting from the leftmost
corner of the top row (with an east step) and following clockwise the boundary of $D$ until it reaches the lowest corner of the rightmost column (with a south step) is a cut of $P_D$. 
For instance, with $D$ being the directed convex polyomino in Fig.~\ref{fig:PD}, $\lambda_D=e^2se^2se^2s^3$.

\begin{proposition}\label{prop:directed}
Directed convex polyominoes of semi-perimeter $n+2$ are in bijection with 
triplets $(F_e,F_s,\lambda)$ such that $F_e$ and $F_s$ are forests and
$\lambda$ is a monotone path, starting with an east step and ending with a south step, and having $m_e+1$ east steps and $m_s+1$ south steps. The integers $m_e$ and $m_s$ are respectively the numbers of trees in 
$F_e$ and $F_s$. 
The sum of the sizes of $F_e$ and of $F_s$ is equal to $n$.
\end{proposition}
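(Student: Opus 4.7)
The plan is to compose the bijection $\Phi$ of Proposition \ref{prop:bijection_pp_pair} with the decomposition $D \mapsto (P_D, \lambda_D)$ introduced just above: define $\Theta(D) = (F_e, F_s, \lambda_D)$, where $(F_e, F_s) = \Phi(P_D)$. Since $P_D$ and $D$ share the same bounding rectangle, $P_D$ has semi-perimeter $n+2$, so by Proposition \ref{prop:bijection_pp_pair} the pair $(F_e, F_s)$ has total size $n$. By the construction of $\Phi$, the roots of $F_e$ are exactly the dots in the top row of $P_D$, which number $\alpha(P_D)$ (the rightmost cell $E$ of the top row carries no dot), so $m_e = \alpha(P_D)$ and similarly $m_s = \beta(P_D)$. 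Since $\lambda_D$ is a cut of $P_D$, it has $\alpha(P_D)+1 = m_e+1$ east and $\beta(P_D)+1 = m_s+1$ south steps, starts with an east step, and ends with a south step, as required.

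For the inverse, given a triplet $(F_e, F_s, \lambda)$ with the stated parameters, I set $P = \Phi^{-1}(F_e, F_s)$, a parallelogram polyomino with $\alpha(P) = m_e$ and $\beta(P) = m_s$, and place $\lambda$ starting at the northwest corner $A$ of the top row of $P$. The step counts of $\lambda$ force it to end at the southeast corner $B$ of the rightmost column of $P$. A key geometric observation is that the axis-aligned rectangle with northwest corner $A$ and southeast corner $B$ lies entirely inside $P$: its northeast corner coincides with the northeast corner of $P$'s bounding box, and within this rectangle the only portions of $P$'s boundary are the final east segment of the upper path of $P$, from $A$ to the northeast corner, and the final north segment of the lower path, from $B$ to the same corner. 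Being monotone southeast from $A$ to $B$, the path $\lambda$ is confined to this rectangle and hence stays inside $P$. I then define $D$ to be the set of cells of $P$ lying weakly to the southwest of $\lambda$.

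The remaining task, which I expect to be the most technical part, is verifying that this $D$ is a directed convex polyomino with $P_D = P$ and $\lambda_D = \lambda$. Row- and column-convexity of $D$ follow from the monotone southeast shape of $\lambda$ combined with the convexity of $P$; directedness holds because the root (southwest corner) of $P$ remains in $D$, and every cell of $D$ can be reached from it by north and east steps, inherited from the analogous property in $P$. By how $\lambda$ starts and ends, $D$ and $P$ share the same bounding rectangle, and the leftmost cell of $D$'s top row and the bottommost cell of $D$'s rightmost column occupy the same positions as in $P$; extending the top side of $D$ east and its right side north then reproduces exactly $P$, so $P_D = P$. Finally, by construction $\lambda$ coincides with the northeast staircase boundary of $D$, so $\lambda_D = \lambda$. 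Mutual inversion of $\Theta$ and its reverse is then immediate, using the bijectivity of $\Phi$, which completes the proof.
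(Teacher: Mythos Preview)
Your proof is correct and follows essentially the same approach as the paper: compose the decomposition $D\mapsto(P_D,\lambda_D)$ with the bijection $\Phi$, and for the inverse apply $\Phi^{-1}$ and then carve $D$ out of $P$ using $\lambda$ as a cut. The paper's own argument is considerably more terse---it simply asserts that from $(P,\lambda)$ ``we can determine a unique directed convex polyomino''---whereas you supply the geometric verification (that the rectangle on $A$ and $B$ lies in $P$, that the resulting $D$ is directed convex, and that $P_D=P$, $\lambda_D=\lambda$) that the paper leaves implicit.
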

\begin{proof}
A directed convex polyomino $D$ of semi-perimeter $n+2$ is uniquely determined by $P_D$ and $\lambda_D$. 
Now, using the bijection $\Phi$, the \pp\ $P_D$ is uniquely determined by a pair of 
forests $(F_e,F_s)$, with the property that the number of roots of 
$F_e$ (resp. $F_s$) is equal to $\alpha(P_D)$ (resp. $\beta(P_D)$).
By construction of $\phi$, the sum of the sizes of $F_e$ and of $F_s$ is equal to $n$.
Furthermore the cut has $\alpha(P_D) + 1$ east steps and $\beta(P_D) + 1$ 
south steps.

Conversely, we consider a triplet $(F_e, F_s, \lambda)$ satisfying the conditions
of Proposition~\ref{prop:directed}, we can build the \pp\ $\Phi^{-1}(F_e,F_s)$. We have that $\alpha(P)=m_e$ and $\beta(P)=m_s$. We deduce that $\lambda$ is a cut of $P$ and we can determine an unique directed convex polyomino.
\end{proof}

We recall that a \emph{bilateral Dyck path} is a directed path on $\mathbb{Z}\times \mathbb{Z}$ starting at $(0,0)$ in the $(x,y)$-plane and ending on the line $y=0$, which has unit steps in the $(1,1)$ (up step $u$) and $(1,-1)$ (down step $d$) directions. In the literature these objects are also referred to as free Dyck paths or Grand-Dyck paths.

A \emph{Dyck path} is a bilateral path which has no vertices with negative $y$-coordinates.

The \emph{semi-length} of a bilateral Dyck path is half the number of its steps. It is rather straightforward that the number of bilateral Dyck paths of semi-length $n$ is equal to ${2n} \choose {n}$. 
\begin{proposition}\label{prop:bilateral}
The number of bilateral Dyck paths of semi-length $n$ is equal to the number of 
triplets $(F_e,F_s,\lambda)$ where $F_e$ and $F_s$ are forests,
$\lambda$ is a monotone path, having $m_e$ east steps and $m_s$ south steps, and the integers $m_e$ and $m_s$ are respectively the number of trees in $F_e$ and $F_s$. 
The sum of the sizes of $F_e$ and of $F_s$ is equal to $n$.
\end{proposition}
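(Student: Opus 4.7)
The plan is to construct an explicit bijection $\Theta$ between bilateral Dyck paths of semi-length $n$ and the triplets $(F_e,F_s,\lambda)$ described in the statement. The idea rests on two standard ingredients: the classical bijection between (positive) Dyck paths of semi-length $p$ and ordered forests of size $p$, in which the primitive subpaths---those touching $y=0$ only at their endpoints---correspond to the trees of the forest; together with the obvious decomposition of a bilateral Dyck path at its returns to $y=0$ into an alternating sequence of maximal arcs that lie weakly above or weakly below the $x$-axis.

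First I would define $\Theta$: given a bilateral Dyck path $\omega$ of semi-length $n$, split $\omega$ at its returns to $y=0$ and assemble, on the one hand, the subsequence of arcs staying weakly above (a positive Dyck path of semi-length $p$ with $m_e$ primitive pieces) and, on the other, the subsequence of arcs staying weakly below (which, after reflection across the $x$-axis, is a positive Dyck path of semi-length $q$ with $m_s$ primitive pieces), where $p+q=n$. Applying the classical Dyck-path/forest bijection to each of these positive Dyck paths produces forests $F_e$ and $F_s$, having $m_e$ and $m_s$ trees, respectively. Finally the order in which above- and below-arcs alternate in $\omega$ is recorded as a monotone path $\lambda$ with $m_e$ east steps and $m_s$ south steps, an east (resp.\ south) step at position $i$ signalling the $i$-th arc being above (resp.\ below) the axis.

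The inverse $\Theta^{-1}$ is immediate: reading $\lambda$ from left to right, each east step consumes the next tree of $F_e$ and appends the corresponding primitive positive arc, while each south step consumes the next tree of $F_s$ and appends the corresponding primitive negative arc. The concatenation is a bilateral Dyck path of semi-length $p+q=n$, and the two constructions are patently mutual inverses. This simultaneously gives the enumeration and an explicit bijection compatible with the statistics $(m_e,m_s)$ and the partition of $n$ as $|F_e|+|F_s|$.

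There is no real obstacle beyond carefully matching the primitive decomposition of each positive arc with the roots of the corresponding forest so that $F_e$ (resp.\ $F_s$) has exactly $m_e$ (resp.\ $m_s$) trees. A short alternative, should a bijective construction feel heavy, is to invoke Proposition~\ref{prop:directed} directly: prepending an east step and appending a south step to $\lambda$ yields a trivial bijection between the triplets above and those counted by Proposition~\ref{prop:directed}; the latter enumerate directed convex polyominoes of semi-perimeter $n+2$, whose number is $\binom{2n}{n}$ by item (ii) of the introduction, matching the number of bilateral Dyck paths of semi-length $n$ noted just above.
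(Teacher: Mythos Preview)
Your bijection is correct and is essentially the same construction as the paper's: both decompose a bilateral Dyck path at its returns to the $x$-axis into primitive arcs, send each positive arc to a tree of $F_e$ and each (reflected) negative arc to a tree of $F_s$ via the standard tree--Dyck correspondence, and record the left-to-right order of arc signs as the path $\lambda$. The paper simply presents the map in the opposite direction (triplet to bilateral path) and writes the tree--Dyck step as $D(T)=u\,D'(T)\,d$; your alternative shortcut via Proposition~\ref{prop:directed} and the known count $\binom{2n}{n}$ is also valid but not what the paper does.
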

\begin{proof}
Let $\chi=(F_e,F_s,\lambda)$ be a triplet satisfying the hypotheses.
We denote by $e_i$ (resp. $s_i$) the $i$-th east (resp. south) step of $\lambda$. We remark that the number of east (resp. south) steps of $\lambda$ is equal to the number of trees in $F_e$ (resp. $F_s$). We also recall that a tree $T$ can be represented by a Dyck path $D'(T)$, applying a standard mapping: we turn around the tree and we put an up step $u$ the first time we follow an edge of $T$ and a down step $d$ the second time. Then we build the elevated Dyck path $D(T)=uD'(T)d$. The semi-length of $D(T)$ is equal to the size of $T$. 

Now we associate to the $i$-th tree $T_i$ of $F_e$ the Dyck path $D(T_i)$, while to the $j$-th tree $T_j$ of $F_s$ the path $\overline{D}(T_j)$, obtained from $D(T_j)$ by exchanging $u$ steps with $d$ steps and viceversa. Now we define the bilateral Dyck path $B(\chi)$ as the concatenation of the paths $B_1\ldots B_{m_e + m_s}$, where:
\begin{itemize}
	\item $B_j=D(T_i)$ if the $j$-th step of $\lambda$ is $e_i$;
	\item $B_j=\overline{D}(T_i)$ if the $j$-th step of $\lambda$ is $s_i$,
\end{itemize}
where $j$ runs from $1$ to $m_e+m_s$.
We can proceed in the inverse way to obtain the triplet $(F_e,F_s,\lambda)$ starting from a given bilateral Dyck path $B$. We can see an instance of this bijection in Fig. \ref{fig:bil}.
\end{proof}

\begin{center}
\begin{figure}[htb]
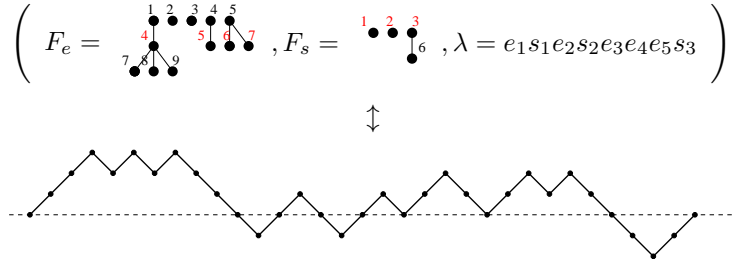

$$
\left(
\begin{array}{r}
F_e = 
\begin{array}{c}
\includegraphics[width=.15\linewidth]{Dir3}
\end{array}
,
F_s =
\begin{array}{c}
\includegraphics[width=.075\linewidth]{Dir2}
\end{array}
, 
\lambda = 
e_1s_1e_2s_2e_3e_4e_5s_3
\end{array}
\right)\\
$$
$$
\updownarrow $$
$$
\begin{array}{r}
\includegraphics[width=.8\linewidth]{bilateral}
\end{array}
$$
\caption{A triplet $(F_e,F_s,\lambda)$ and the corresponding bilateral Dyck path. \label{fig:bil}}
\end{figure}
\end{center}

\begin{corollary}
The number of bilateral Dyck paths of semi-length $n$ is equal to the number of directed convex polyominoes of semi-perimeter $n+2$.
\end{corollary}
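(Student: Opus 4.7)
The plan is to chain together Propositions \ref{prop:directed} and \ref{prop:bilateral} by reconciling the small discrepancy in the two classes of triplets they describe. Both propositions identify a set of objects with triplets $(F_e, F_s, \lambda)$ where $F_e, F_s$ are forests whose sizes sum to $n$, but the monotone paths $\lambda$ differ: in Proposition \ref{prop:directed} we have $m_e+1$ east steps and $m_s+1$ south steps, with the additional constraint that $\lambda$ begins with an east step and ends with a south step, while in Proposition \ref{prop:bilateral} we have merely $m_e$ east and $m_s$ south steps.

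First I would observe that these two classes of triplets are trivially equinumerous via the map that strips off the forced initial east step and the forced terminal south step of $\lambda$ in the setting of Proposition \ref{prop:directed}. What remains is a monotone path with exactly $m_e$ east steps and $m_s$ south steps, which is precisely the path appearing in Proposition \ref{prop:bilateral}. The inverse map prepends an east step and appends a south step; no information is lost since these steps are forced by the statement of Proposition \ref{prop:directed}.

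Composing the three bijections now gives the corollary: a directed convex polyomino of semi-perimeter $n+2$ corresponds by Proposition \ref{prop:directed} to a triplet with path of length $(m_e+1)+(m_s+1)$; stripping the first and last steps yields a triplet of the kind described in Proposition \ref{prop:bilateral}; and this triplet corresponds to a bilateral Dyck path of semi-length $n$. Each step is a bijection, so the resulting composition is too, and the sizes match since the total size of $(F_e, F_s)$ is $n$ in both settings.

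There is essentially no obstacle here; the corollary is immediate from the preceding two propositions, and the only subtlety to make explicit is the trivial bookkeeping of the two extra steps of $\lambda$ in Proposition \ref{prop:directed}. I would present the proof as a one-line application: combine Propositions \ref{prop:directed} and \ref{prop:bilateral}, noting that the families of triplets they parameterize are in obvious bijection via addition/removal of the initial east step and final south step of $\lambda$.
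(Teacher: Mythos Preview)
Your proposal is correct and follows essentially the same approach as the paper: the paper's proof also composes Propositions~\ref{prop:directed} and~\ref{prop:bilateral} via the map $\lambda\mapsto e\,\lambda\,s$ (equivalently, your stripping of the forced first east and last south step) to pass between the two families of triplets. The only cosmetic difference is direction: the paper starts from a bilateral Dyck path and builds the polyomino, whereas you describe the inverse.
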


\begin{proof}
Given a bilateral Dyck path $B$, let $(F_e,F_s,\lambda)$ be the triplet corresponding to $B$ according to the bijection described in Proposition \ref{prop:bilateral}. Now,  let us define $\lambda'=e\,\lambda\,s$. Finally, according to Proposition \ref{prop:directed} there is a unique directed convex polyomino $P$ corresponding to the triplet $(F_e,F_s,\lambda')$, then $P$ bijectively corresponds to $B$.
\end{proof}

Figure \ref{fig:bil} shows the bilateral Dyck path bijectively associated with the directed convex polyomino in Figure \ref{fig:Dir}.

\begin{center}
\begin{figure}[htb]
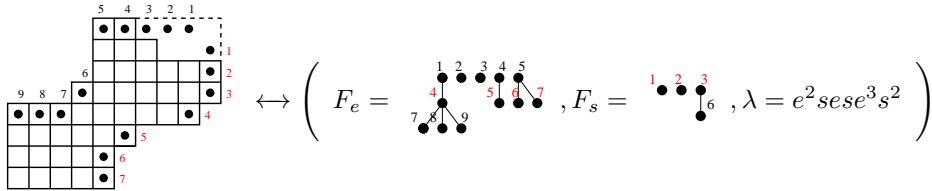

$
\begin{array}{c}
\includegraphics[width=.25\linewidth]{Dir1}
\end{array}
\xleftrightarrow{\hspace{.2cm}}
\left(
\begin{array}{r}
F_e = 
\begin{array}{c}
\includegraphics[width=.15\linewidth]{Dir3}
\end{array}
,
F_s =
\begin{array}{c}
\includegraphics[width=.075\linewidth]{Dir2}
\end{array}
, 
\lambda = 
e^2sese^3s^2
\end{array}
\right)
$
\caption{
A directed convex polyomino with the corresponding triplet $(F_e,F_s,\lambda).$ \label{fig:Dir}
}
\end{figure}
\end{center}
As a consequence of Proposition \ref{prop:directed} we have a refinement of Proposition~\ref{prop:bijection_pp_pair}.

\begin{corollary}\label{cor:parallelogram}
A parallelogram polyomino of semi-perimeter $n+2$ is uniquely determined by a pair $(F_e,F_s)$ of ordered forests, where $F_e$ (resp. $F_s$) has exactly $\alpha(P)$ (resp. $\beta(P)$) ordered trees and the sum of the sizes of $F_e$ and of $F_s$ is equal to $n$.
\end{corollary}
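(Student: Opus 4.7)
The plan is to obtain Corollary \ref{cor:parallelogram} as a direct specialization of Proposition \ref{prop:directed} to the case where the directed convex polyomino is already parallelogram. First I would note that for a parallelogram polyomino $P$ the construction $D\mapsto P_D$ is trivial: because the top row of $P$ already reaches the northeast corner of the bounding rectangle and the rightmost column already descends from that same corner, no extension cells are added and $P_P=P$. Consequently, applying Proposition \ref{prop:directed} with $D=P$ yields a triplet $(F_e,F_s,\lambda_P)$ in which $F_e$ has exactly $\alpha(P)$ trees, $F_s$ has exactly $\beta(P)$ trees, and the sizes of $F_e$ and $F_s$ sum to $n$.

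The only remaining point is to observe that, under the parallelogram hypothesis, the cut $\lambda_P$ carries no independent information. Following $\lambda_P$ clockwise from the leftmost corner of the top row, one crosses the entire top row eastward with $\alpha(P)+1$ consecutive east steps, arrives at the northeast corner, and then descends the rightmost column with $\beta(P)+1$ consecutive south steps; hence $\lambda_P=e^{\alpha(P)+1}s^{\beta(P)+1}$ is forced by the shape of $P$. Conversely, given any pair $(F_e,F_s)$ with $\alpha$ trees in $F_e$ and $\beta$ trees in $F_s$, I would set $\lambda:=e^{\alpha+1}s^{\beta+1}$, invoke Proposition \ref{prop:directed} to recover a unique directed convex polyomino $D$, and note that the equality $\lambda_D=\lambda$ forces $D=P_D$, i.e.\ $D$ is parallelogram. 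This completes the bijective correspondence asserted by the corollary.

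I do not anticipate any serious obstacle: the whole content is the observation that the path coordinate of Proposition \ref{prop:directed} trivializes for parallelograms. The only steps that deserve a sentence of explicit verification are the equality $P_P=P$ and the explicit form $\lambda_P=e^{\alpha(P)+1}s^{\beta(P)+1}$, both of which are immediate from the fact that the upper and lower paths of a parallelogram polyomino meet exactly at the two extremal corners of the bounding rectangle.
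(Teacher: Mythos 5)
Your proposal is correct and follows essentially the same route as the paper: the paper's proof is the single observation that a directed convex polyomino $D$ is parallelogram if and only if $\lambda_D=e^{\alpha(P)+1}s^{\beta(P)+1}$, which is exactly the point you make (with the added, harmless, verification that $P_P=P$). The corollary then follows by specializing Proposition~\ref{prop:directed} just as you describe.
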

\begin{proof}
We just need to observe that, a directed convex poly omino $D$ is a parallelogram 
polyomino if and only if $\lambda_D=e^{\alpha(P)+1}s^{\beta(P)+1}$.
\end{proof}

\section{A general method for the enumeration of directed convex polyominoes}
In this section,  we present a general method, based on Proposition~\ref{prop:directed},
to enumerate several statistics on families of directed convex polyominoes. Let $\cal C$ be a class of directed convex polyominoes. Our method consists of 3 steps:

\smallskip

{ \centering
\fbox{ \noindent \begin{minipage}{.97\linewidth}
\textbf{Step 1}:
Determine a combinatorial characterization of the cut $\lambda$ and the two forests 
$F_e$ and $F_s$ associated with the considered class $\cal C$ of directed convex polyominoes;

\textbf{Step 2:}
Determine the generating functions with respect to the studied statistics for all the 
cuts $\lambda$, and for all the trees of $F_e$ and $F_s$ associated with the class $\cal C$;

\textbf{Step 3:}
The generating function of the class $\cal C$ is then obtained by performing the composition of the generating function of the cuts with all the generating functions of the trees of $F_e$ and $F_s$.
\end{minipage}
}

}

\smallskip

This method is generic and very simple, and it allows us to obtain generating functions for several subclasses of directed convex polyominoes according to several different parameters, such as: the semi-perimeter, 
the degree of convexity, the width, the height, the size of the last 
row/column in $D$/$P_D$.

Let us begin with a classical example. We will count directed convex 
polyominoes according to the semi-perimeter.

\begin{proposition}
The generating function of directed convex polyominoes with respect to the semi-perimeter is given by
$$
\frac{z^2}{\sqrt{1-4z}}.
$$
\end{proposition}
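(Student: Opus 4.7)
The plan is to apply the three-step scheme announced just above, with $\mathcal{C}$ being the full class of directed convex polyominoes, in which case Step~1 imposes no constraint at all on the triplet $(F_e,F_s,\lambda)$. By Proposition~\ref{prop:directed}, polyominoes of semi-perimeter $n+2$ are in bijection with arbitrary such triplets with forest sizes summing to $n$, where $\lambda$ is any monotone path starting with an east step and ending with a south step, having $m_e+1$ east and $m_s+1$ south steps, and $m_e,m_s$ are the numbers of trees in $F_e,F_s$.

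For Step~2, I would use that ordered trees by number of nodes have the Catalan generating function
$$T(z) = \frac{1-\sqrt{1-4z}}{2},$$
so that the generating function of ordered forests with exactly $m$ trees, counted by total size, is $T(z)^m$. The number of admissible paths $\lambda$ with prescribed $(m_e,m_s)$ equals $\binom{m_e+m_s}{m_e}$, since the first east step and the last south step are forced, and the remaining $m_e+m_s$ interior steps may be ordered arbitrarily.

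Step~3 is then just a routine assembly. After factoring out $z^2$ to account for the ``$+2$'' in the semi-perimeter relative to the total forest size, one obtains
$$z^2 \sum_{m_e,m_s \geq 0} \binom{m_e+m_s}{m_e}\, T(z)^{m_e+m_s} \;=\; z^2 \sum_{N \geq 0} 2^N\, T(z)^N \;=\; \frac{z^2}{1-2T(z)}.$$
Since $T$ satisfies $T = z/(1-T)$, a direct computation gives $1-2T(z)=\sqrt{1-4z}$, yielding the claimed formula $z^2/\sqrt{1-4z}$.

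I do not anticipate any real obstacle: the whole argument boils down to a geometric sum that closes up thanks to the Catalan identity. The only point requiring care is the enumeration of the admissible paths $\lambda$, where the constraints of starting east and ending south must be incorporated correctly so that only the $m_e+m_s$ interior steps contribute to the binomial coefficient. As a sanity check, extracting the coefficient of $z^{n+2}$ recovers $\binom{2n}{n}$, matching the classical count cited in item~(ii) of the introduction.
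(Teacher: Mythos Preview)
Your proof is correct and follows essentially the same approach as the paper's: both apply the three-step scheme using the bijection of Proposition~\ref{prop:directed}, the Catalan generating function $\mathcal{T}=(1-\sqrt{1-4z})/2$ for the trees, and arrive at $z^2/(1-2\mathcal{T})$. The only cosmetic difference is that the paper encodes the cut via a generating function $\mathcal{G}(s,e,z_s,z_e)=\dfrac{z_sz_e}{1-(s+e)}$ and then substitutes, whereas you expand this as the explicit binomial sum $\sum_{m_e,m_s}\binom{m_e+m_s}{m_e}T^{m_e+m_s}$; these are the same computation.
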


\begin{proof} 
Let us apply the 3 steps of our method:

\noindent \textbf{Step 1:} 
According to Proposition~\ref{prop:directed}, directed convex polyominoes are in bijection with 
triplets $(F_e, F_s, \lambda)$ where: to each east step of $\lambda$ is bijectively associated a tree of $F_e$, except 
for the last east step of $\lambda$, and to each south step of $\lambda$ is associated a trees of $F_s$, except for the first 
south step. There is no constraint on the trees of $F_e$ and $F_s$.

\noindent \textbf{Step 2:} 
The generating function $\mathcal{T}$ for each ordered tree in $F_e$ and in 
$F_s$ is the following: 
$$
\mathcal{T} = \frac{1-\sqrt{1-4z}}{2}.
$$
The generating function for the cut is : 
$$
\mathcal{G}(s,e,z_s,z_e) = \frac{z_s\,z_e}{1-(s+e)},
$$
where $z_s$ (resp. $z_e$) represents the first (resp. last) south (resp. east) step of the cut.
while $e$ and $s$ represent all the other east and south steps of the cut.

\noindent \textbf{Step 3:} The generating function of directed convex polyominoes is
$$
\mathcal{G}(\mathcal{T},\mathcal{T},z,z) = \frac{z\,z}{1-2\mathcal{T}} = \frac{z^2}{\sqrt{1-4z}}.
$$ 

\end{proof}

\begin{proposition}
The generating function of directed convex polyominoes according to the semiperimeter,
the size of the top row and the size of the rightmost column of the associated parallelogram polyominoes is
$$
\frac{2xyz^2}{2-(x+y)(1-\sqrt{1-4.z})}.
$$
\end{proposition}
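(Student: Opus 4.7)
The plan is to apply verbatim the three-step method of this section, refining the computation of the previous proposition by adding two variables $x$ and $y$ that keep track of the top row and rightmost column of the associated parallelogram polyomino. No new bijection is required: everything flows from Proposition~\ref{prop:directed} combined with the already-computed generating functions $\mathcal{T}=(1-\sqrt{1-4z})/2$ and $\mathcal{G}(s,e,z_s,z_e)=z_s z_e/(1-s-e)$.

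For Step~1, the key observation is that the top row and the rightmost column of $P_D$ are read off the cut $\lambda$ alone: their sizes equal $\alpha(P_D)+1 = m_e+1$ and $\beta(P_D)+1 = m_s+1$, i.e.\ the numbers of east and south steps of $\lambda$. In particular, these two statistics are insensitive to the trees of $F_e$ and $F_s$. For Step~2, the tree generating function $\mathcal{T}$ is therefore unchanged, while in $\mathcal{G}$ I would substitute $z_e \mapsto xz$ and $z_s \mapsto yz$ (the bare last-east and first-south steps of $\lambda$, marked both by $z$ for semi-perimeter and by $x$ or $y$ for the row/column) and $e \mapsto x\mathcal{T}$, $s \mapsto y\mathcal{T}$ (the remaining east and south steps of $\lambda$, each decorated with a tree of $F_e$ or $F_s$). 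For Step~3, composition then gives
$$
\mathcal{G}(y\mathcal{T},\, x\mathcal{T},\, yz,\, xz) \;=\; \frac{xyz^2}{1-(x+y)\mathcal{T}},
$$
and clearing the factor $\tfrac{1}{2}$ hidden in $\mathcal{T}=(1-\sqrt{1-4z})/2$ yields the announced expression.

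The argument is essentially mechanical once Step~1 is in hand; I do not expect any real obstacle. The only point that deserves a line of justification is that \emph{every} east (resp.\ south) step of $\lambda$ --- bare or tree-bearing alike --- should be marked by $x$ (resp.\ $y$), which is forced by the equalities $m_e+1 = \alpha(P_D)+1$ and $m_s+1 = \beta(P_D)+1$ coming from Proposition~\ref{prop:directed}. Everything else reduces to a one-line algebraic simplification.
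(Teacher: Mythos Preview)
Your proposal is correct and follows essentially the same approach as the paper: both mark every east step of the cut by $x$ and every south step by $y$ (bare and tree-bearing alike), then compose with $\mathcal{T}$ to obtain $\frac{xyz^2}{1-(x+y)\mathcal{T}}$. The only cosmetic difference is that the paper folds the $x,y$ markers into a modified cut generating function $\mathcal{G}(s,e,z_s,z_e)=\frac{y z_s\, x z_e}{1-(ys+xe)}$ before substituting $\mathcal{G}(\mathcal{T},\mathcal{T},z,z)$, whereas you keep the original $\mathcal{G}$ and attach the markers at substitution time; the computations are literally identical.
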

\begin{proof}
Let $z$, $x$, $y$ take into account the semi-perimeter, the size of the top row and the size of the rightmost column of the associated parallelogram polyominoes, respectively.
We just need to observe that the generating function for the cut is 
$\mathcal{G}(s, e, z_s, z_e ) = \frac{y\,z_s\, x\,z_e}{1-(y\,s+x\,e)}$.
So we obtain the desired generating function $\mathcal{G}(\mathcal{T},\mathcal{T},z,z)$.
\end{proof}

\begin{proposition}
The generating function of directed convex polyominoes according to the semiperimeter,
the size of the top row and the size of the rightmost column of the polyominoes is
$$
xy \, \frac{(1-\mathcal{T})^2}{(1-x\,\mathcal{T})(1-y\,\mathcal{T})} \, \frac{z^2}{\sqrt{1-4z}},
$$
where $\mathcal{T}=\frac{1-\sqrt{1-4z}}{2}$.
\end{proposition}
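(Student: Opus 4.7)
The plan is to apply the three-step method of this section, refining the cut generating function so that it tracks the two new statistics. The combinatorial key is the observation that, for a directed convex polyomino $D$, the size of its top row equals the length of the maximal initial east run of the cut $\lambda_D$, while the size of its rightmost column equals the length of the maximal final south run; indeed $\lambda_D$ starts at the leftmost corner of the top row of $D$ and, by the convexity of that row, goes east across the top of every one of its cells before being forced to turn south, and dually at the end of $\lambda_D$.

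For Step 2, I would decompose any cut as $\lambda = e^a \cdot M \cdot s^b$ with $a,b \geq 1$ the lengths of the initial east run and final south run; by maximality of these runs, $M$ is either empty or of the form $M = s \cdot M' \cdot e$ for an arbitrary monotone path $M'$. Following the weighting scheme used in the previous two propositions, the first east step and the last south step of $\lambda$ each contribute a factor $z$ (they are the two steps not matched to trees), while every other east (resp.\ south) step contributes its tree factor $\mathcal{T}$ coming from $F_e$ (resp.\ $F_s$); in addition, each east step of the initial run carries an extra $x$ and each south step of the final run carries an extra $y$. The $e^a$ block then contributes
\begin{equation*}
\sum_{a \geq 1} xz \cdot (x\mathcal{T})^{a-1} \;=\; \frac{xz}{1-x\mathcal{T}},
\end{equation*}
symmetrically the $s^b$ block contributes $\frac{yz}{1-y\mathcal{T}}$, and the $M$ block (either empty, or $s \cdot M' \cdot e$ with $M'$ any monotone path whose every step is weighted $\mathcal{T}$) contributes $1 + \frac{\mathcal{T}^2}{1 - 2\mathcal{T}} = \frac{(1-\mathcal{T})^2}{1 - 2\mathcal{T}}$.

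Multiplying the three independent factors and invoking the identity $1 - 2\mathcal{T} = \sqrt{1-4z}$ (immediate from $\mathcal{T} = z/(1-\mathcal{T})$) yields exactly the claimed generating function
\begin{equation*}
\frac{xz}{1-x\mathcal{T}} \cdot \frac{(1-\mathcal{T})^2}{1-2\mathcal{T}} \cdot \frac{yz}{1-y\mathcal{T}} \;=\; xy \, \frac{(1-\mathcal{T})^2}{(1-x\mathcal{T})(1-y\mathcal{T})} \, \frac{z^2}{\sqrt{1-4z}}.
\end{equation*}
The main obstacle is really the bookkeeping of Step 2: one must carefully reconcile the fact that the first east step and the last south step of $\lambda$ are the two distinguished steps that escape the tree matching (hence the factor $z$ each), with the fact that these very same steps lie in the initial east run and the final south run and so must still carry the statistics $x$ and $y$.
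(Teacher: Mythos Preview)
Your proposal is correct and follows essentially the same route as the paper: the same decomposition $e^{a}\cdot M\cdot s^{b}$ of the cut (with $M$ either empty or of the form $s\cdot M'\cdot e$), the same substitution of $\mathcal{T}$ for the tree-carrying steps, and the same final simplification via $1-2\mathcal{T}=\sqrt{1-4z}$. One small slip to fix: in the paper's convention it is the \emph{last} east step and the \emph{first} south step of $\lambda$ that are not matched to trees (not the first east and last south as you write), but since every matched step receives the same weight $\mathcal{T}$ this relocation of the two bare $z$ factors is harmless and your computation goes through unchanged.
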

\begin{proof}
Let $z$, $x$, $y$ take into account the semi-perimeter, the size of the top row and the size of the rightmost column of the polyominoes, respectively. All south (resp. east) steps of the cut are labelled $s$ (resp. $e$), but, since there are not trees in bijection with the first south step and the last east step of the cut, we need to multiply the final generating 
function by $\frac{z_e\,z_s}{s\,e}$.

We need to take into account the first (resp. last) sequence of east (resp. south) steps of $\lambda$, which are labelled $x\,e$ (resp. $y\,s$), since they give the size of the top (resp. rightmost) row (resp. column) of the polyominoes. 
So the regular expression for the cut is the following:
$x\,e(x\,e)^*\left( 1 + s(e+s)^*e \right) (y\,s)^*y\,s$.
So the generating function is : 
$\mathcal{G}(s, e, z_s, z_e ) = \frac{z_e\,z_s}{e\,s} \left[ \frac{x\,e}{1-x\,e} \, \left( 1 + \frac{e\,s}{1-(e+s)} \right) \,\frac{y\,s}{1-y\,s} \right]$.
So we obtain the desired generating function $\mathcal{G}(\mathcal{T},\mathcal{T},z,z)$.
\end{proof}

\begin{proposition}[\cite{LinChang88, Bousquet1992}]
The generating function of directed convex polyominoes according to the semi-perimeter, the width and the height of the polyominoes is
$$
\frac{xyz^2}{\sqrt{ (1-(x+y)z)^2 -4xyz^2 } } \, .
$$
\end{proposition}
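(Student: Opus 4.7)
The plan is to apply the three-step method of this section with $z$ marking semi-perimeter, $x$ marking width, $y$ marking height. The key structural observation is that each of the $l_1$ columns and $l_2$ rows of $D$ is represented exactly once in the triplet $(F_e, F_s, \lambda)$ given by Proposition \ref{prop:directed}: columns reaching the top row of $P_D$ correspond to east steps of $\lambda$, other columns correspond to dots in the forests that are topmost of their column; rows are handled symmetrically.

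\textbf{Step 1.} I would first verify that in the parallelogram polyomino $P_D$ the only cell simultaneously topmost of its column and rightmost of its row is the excluded cell $E$. This implies every dot of $P_D$ is either an \emph{$N$-dot} (topmost of its column only) or an \emph{$E$-dot} (rightmost of its row only), with $l_1 - 1$ $N$-dots (one for each non-rightmost column) and $l_2 - 1$ $E$-dots. By the construction of $\Phi$, the father of an $N$-dot is an $E$-dot and conversely, so the two types alternate along every root-to-leaf path in $F_e$ and $F_s$; no further constraint is imposed on the trees or on the cut.

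\textbf{Step 2.} Let $T_e(X,Y)$ and $T_s(X,Y)$ denote the generating functions of trees rooted at an $N$-dot and at an $E$-dot, respectively, with $X$ marking $N$-dots and $Y$ marking $E$-dots. The alternation yields
\begin{equation*}
T_e = \frac{X}{1 - T_s}, \qquad T_s = \frac{Y}{1 - T_e},
\end{equation*}
which recovers $\mathcal{T}$ under $X = Y = z$. To count by (semi-perimeter, width, height) I substitute $X = xz$ and $Y = yz$, so each $N$-dot contributes one column and one unit of semi-perimeter, and each $E$-dot contributes one row and one unit of semi-perimeter.

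\textbf{Step 3.} An east step of $\lambda$ and the root of its paired tree in $F_e$ represent the same column, so to avoid double counting I substitute $e \leftarrow T_e(xz, yz)$ rather than $xz\, T_e$. The unique unpaired east step $z_e$ represents the rightmost column of $P_D$, whose topmost cell $E$ is not a dot, so $z_e \leftarrow xz$; south steps are treated symmetrically. Plugging into $\mathcal{G}(s,e,z_s,z_e) = \frac{z_s z_e}{1 - s - e}$ gives $\frac{xyz^2}{1 - T_e - T_s}$. It remains to simplify $1 - T_e - T_s$: from the defining system one obtains $T_e - T_s = (x-y)z$ and a quadratic equation for $T_e$ with discriminant $(1 + xz - yz)^2 - 4xz$, and a short computation shows that $1 - T_e - T_s$ equals this square root. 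The elementary identity $(1 + xz - yz)^2 - 4xz = (1 - (x+y)z)^2 - 4xyz^2$ then identifies it with the radical in the statement. The only delicate point is the bookkeeping of what each element of the triplet contributes to each variable; the algebra is routine.
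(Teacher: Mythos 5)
Your proof is correct and follows essentially the same route as the paper: the alternation of "topmost-of-column" and "rightmost-of-row" dots along tree paths is exactly the paper's parity-of-height observation, and your system $T_e = xz/(1-T_s)$, $T_s = yz/(1-T_e)$ together with the cut generating function $\frac{xz\cdot yz}{1-T_e-T_s}$ is precisely the computation given there. The only addition is your explicit check that $E$ is the unique doubly-enlightened cell, which the paper leaves implicit.
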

\begin{proof}
Let $z$, $x$, $y$ take into account the semi-perimeter, the width and the height of the polyominoes, respectively. 
Given a directed convex polyomino $D$, the width (resp. height) of $D$ is given by the number of cells of $P_D$ which are enlightened from north to south (resp. from east to west). 
By construction of $\Phi$, the cells contributing to the width (resp. height) of $D$ are 
associated with the nodes having an odd (resp. even) height in $F_e$ (resp. $F_s$)
and with the nodes having an even (resp. odd) height in $F_s$ (resp. $F_e$).
So we need the two generating functions $\mathcal{T}_e$ and 
$\mathcal{T}_s$ for the trees of $F_e$ and $F_s$, respectively. 
The nodes at odd (resp. even) height in $\mathcal{T}_e$ (resp. $\mathcal{T}_s$) 
are labelled by $x\,z$ and the nodes at even (resp. odd) height are labelled by $y\,z$.
The two generating functions are obtained by solving the following system:
$$
\mathcal{T}_e = \frac{x z}{1 - \mathcal{T}_s}\quad
\mbox{ and }
\quad \mathcal{T}_s = \frac{y z}{1 - \mathcal{T}_e} \, $$ from which we have
$$
\mathcal{T}_e(x,y) = \frac{1+(x-y)z-\sqrt{\left( 1+(x-y)z \right)^2-4xz} }{2}
\quad
\mbox{ and }
\quad
\mathcal{T}_s(x,y) = \mathcal{T}_e(y,x).$$
The generating function for the cut is 
$$\mathcal{G}(e, s, z_e, z_s ) = \frac{x z_e y z_s}{1-(e+s)},$$ where 
$z_s$ and $z_e$ represent the first south step and the last east 
step of the cut, while $e$ and $s$ represent all the other east and south steps of the cut, respectively.
So the final generating function is :
$$
\mathcal{G}(\mathcal{T}_e,\mathcal{T}_s,z,z)
=
\frac{xyz^2}{\sqrt{ (1-(x+y)z)^2 -4xyz^2 } }.
$$
\end{proof}

Now we consider a further statistic, i.e. the number of {\em inside/outside corners} of a directed convex polyomino. Let us recall that, following the boundary of a directed convex polyomino clockwise, any turn to the right (reps. left) is called an outside (resp. inside) corner. Also the corner at $(0,0)$ is an outside corner. Let us recall that the number of outside and inside corners of any polyomino differ in the amount of $4$. For instance, the polyomino in Fig.\ref{fig:Dir} has $11$ outside corners and $7$ inside corners.

\begin{proposition}\label{prop:corners}
The generating function of directed convex polyominoes according to the semi-perimeter and the number of outside corners is given by
$$
\frac{x^4 z^2}{1-2\mathcal{T}_c + (1-x)\mathcal{T}_c^2},
$$
where $\mathcal{T}_c = \frac{1 + (1-x)z - \sqrt{ (1 + (1-x)z)^2 - 4z }}{2}$.
\end{proposition}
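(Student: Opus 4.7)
The plan is to apply the three-step method of Section~4 with outside corners as the distinguished statistic: I will (i)~decompose the outside corners of a directed convex polyomino $D$ into contributions from $F_e$, $F_s$ and $\lambda$, (ii)~determine the corresponding weighted generating functions, and (iii)~compose. For Step~1, I trace the boundary of $D$ clockwise starting at $(0,0)$ and identify three ``structural'' outside corners, independent of the shape of $D$: the W-to-N turn at $(0,0)$, the N-to-E turn at the top-left of $D$'s top row (where $\lambda$ begins), and the S-to-W turn at the bottom of $D$'s rightmost column (where $\lambda$ ends). Let $p$ and $q$ denote the number of north-segments of the upper path and the number of east-segments of the lower path of $P_D$ respectively, and let $p_\lambda$ be the number of east-to-south transitions of $\lambda$. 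The non-structural outside corners split as $p-1$ right turns interior to the upper-left staircase of $D$, $p_\lambda$ east-to-south transitions of $\lambda$, and $q-1$ right turns interior to the lower-right staircase of $D$. To translate this into a statistic on $(F_e,F_s)$, I prove
\[
\mathrm{int}(F_e) + \mathrm{int}(F_s) \;=\; p + q - 2,
\]
where $\mathrm{int}$ denotes the number of internal (non-leaf) nodes, by counting internal dots by type: a rightmost-of-row dot is internal iff its row contains some topmost-of-column dot, which occurs for exactly $p-1$ of the rows $j<l_2$; symmetrically, a topmost-of-column dot is internal iff its column contains some rightmost-of-row dot, which occurs for exactly $q-1$ of the columns $i<l_1$. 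Hence the total number of outside corners of $D$ equals $\mathrm{int}(F_e)+\mathrm{int}(F_s)+p_\lambda+3$.

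For Step~2, the tree generating function with $x$ marking each internal node satisfies $\mathcal{T}_c = z + xz\,\mathcal{T}_c/(1-\mathcal{T}_c)$, by separating the single-node tree (weight $z$) from those whose root has at least one ordered subtree (the root is internal, weighted by $xz$; each subtree contributes $\mathcal{T}_c$). This rearranges to $\mathcal{T}_c^2-(1+(1-x)z)\mathcal{T}_c+z=0$, whose analytic branch at $z=0$ is exactly the closed form for $\mathcal{T}_c$ given in the proposition. For the cut, writing $\lambda = e^{c_1}s^{d_1}\cdots e^{c_{p_\lambda}}s^{d_{p_\lambda}}$ with $c_i,d_i\geq 1$, denoting the last east and first south steps by the distinguished variables $z_e$ and $z_s$, the remaining east and south steps by $e$ and $s$, and marking each east-to-south transition by $x$, a direct summation over $p_\lambda\geq 1$ gives
\[
\mathcal{G}(e,s,z_e,z_s) \;=\; \frac{x\,z_e z_s}{(1-e)(1-s)-x\,e s}.
\]

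Step~3 is then purely algebraic: substituting $e=s=\mathcal{T}_c$ and $z_e=z_s=z$ and multiplying by $x^3$ to account for the three structural corners yields
\[
x^3\,\mathcal{G}(\mathcal{T}_c,\mathcal{T}_c,z,z) \;=\; \frac{x^4 z^2}{(1-\mathcal{T}_c)^2 - x\,\mathcal{T}_c^2} \;=\; \frac{x^4 z^2}{1-2\mathcal{T}_c+(1-x)\mathcal{T}_c^2},
\]
as required. The main obstacle will be the identity $\mathrm{int}(F_e)+\mathrm{int}(F_s)=p+q-2$: the tempting individual equalities $\mathrm{int}(F_e)=p-1$ and $\mathrm{int}(F_s)=q-1$ can fail, because the zigzag fatherhood rule of $\Phi$ may redistribute internal nodes between the two forests, so the identity must be proved by counting internal dots across both forests globally, not one forest at a time.
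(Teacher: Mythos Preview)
Your proof is correct and follows essentially the same three-step approach as the paper: the same cut generating function (your $(1-e)(1-s)-xes$ expands to the paper's $1-(e+s)+(1-x)es$), the same tree equation (your $\mathcal{T}_c=z+xz\mathcal{T}_c/(1-\mathcal{T}_c)$ rearranges to the paper's $T_c=(1-x)z+zx/(1-T_c)$), and the same extra factor $x^3$ for the three fixed corners. The only noteworthy difference is in justifying the forest contribution: the paper asserts a bijection between the remaining outside corners and the \emph{first sons} in $F_e\cup F_s$, whereas you count \emph{internal nodes} via the identity $\mathrm{int}(F_e)+\mathrm{int}(F_s)=p+q-2$, which is the same statistic phrased differently. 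Your global counting argument (showing a rightmost-of-row dot has row-children iff some column tops out in that row, and using that the plateaux of the upper path give exactly $p-1$ such rows below $l_2$) is a welcome explicit proof of what the paper leaves as a one-line claim, and your caveat that the individual equalities $\mathrm{int}(F_e)=p-1$, $\mathrm{int}(F_s)=q-1$ can fail is well taken.
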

\begin{proof}
The regular expression for the cut is $(ee^*xss^*)(ee^*xss^*)^*$ where $x$ counts
the number of east-south corners in the cut.
The generating function of the cut is equal to :
$$
\mathcal{G}(e,s,z_e,z_s)
=
\frac{z_e z_s}{es}
\cdot
\left[
	\frac{esx}{(1-e)(1-s)} 
	\cdot
	\frac{1}{1 - \frac{esx}{(1-e)(1-s)} }
\right]
=
\frac{x z_e z_s}{1 - (e+s) + (1-x)es}
.
$$

Except for the topmost and leftmost (resp. rightmost and lowest) outside corners of the directed convex polyomino $D$, the outside corners of type north-east and south-east are bijective with the first son of each node of the forests.
The generating function $T_c$ for the trees is obtained by solving the 
following equation:
$$
T_c = z( 1 + x( T_c+ T_c^2 + \dots ) ) = (1-x)z + \frac{zx}{1-T_c}.
$$
We obtain the generating function
$$
T_c = \frac{1 + (1-x)z - \sqrt{(1 + (1-x)z)^2 - 4z } }{2}.
$$

Now the desired generating function is: 
$$
x^3 \mathcal{G}( T_c, T_c, z, z ).
$$
In the formula above we have added the factor $x^3$, which takes into account the following three corners  of the polyomino: the topmost and leftmost outside corner; the rightmost and lowest outside corner; and the lowest and leftmost outside corner.
\end{proof}

\begin{corollary}\label{prop:insidecorners}
The generating function of directed convex polyominoes according to the semi-perimeter and the number of inside corners of polyominoes is given by
$$
\frac{z^2}{1-2\mathcal{T}_c + (1-x)\mathcal{T}_c^2},
$$
where $\mathcal{T}_c = \frac{1 + (1-x)z - \sqrt{ (1 + (1-x)z)^2 - 4z }}{2}$.
\end{corollary}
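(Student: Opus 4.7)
The plan is to derive this corollary as an immediate consequence of Proposition~\ref{prop:corners}, via the elementary identity $o(P) - i(P) = 4$ recalled just above that proposition, where $o(P)$ and $i(P)$ denote the numbers of outside and inside corners of the polyomino $P$. This identity is a consequence of the fact that the sum of signed exterior angles along any simple closed polygonal curve equals $2\pi$: each outside corner contributes $+\pi/2$ and each inside corner contributes $-\pi/2$. Since a convex polyomino is simply connected (convexity forbids holes), its boundary is a simple closed curve, so the identity applies uniformly to every directed convex polyomino.

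From this identity, the generating function according to inside corners is just a global rescaling of the one according to outside corners:
$$
\sum_{P} z^{\text{semi-per}(P)}\, x^{i(P)}
\;=\; \sum_{P} z^{\text{semi-per}(P)}\, x^{o(P)-4}
\;=\; x^{-4}\,\Bigl(\sum_{P} z^{\text{semi-per}(P)}\, x^{o(P)}\Bigr).
$$
Substituting the closed form from Proposition~\ref{prop:corners} and cancelling the factor $x^4$ in the numerator gives precisely the expression claimed in the corollary, with the \emph{same} series $\mathcal{T}_c$.

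There is essentially no obstacle: the proof is one line once Proposition~\ref{prop:corners} is in hand. The only subtlety worth flagging is that $\mathcal{T}_c$ remains literally unchanged, even though its $x$ was originally introduced to mark outside corners (the ``first son'' of each node of the forests). This is legitimate because every occurrence of $x$ in the formula of Proposition~\ref{prop:corners}, whether coming from the explicit prefactor $x^3$, from the cut, or from $\mathcal{T}_c$, is tracking outside corners of the polyomino; the single global multiplication by $x^{-4}$ converts the total exponent from $o(P)$ to $o(P)-4 = i(P)$, and no internal change of variable inside $\mathcal{T}_c$ is needed.
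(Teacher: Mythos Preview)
Your proof is correct and follows essentially the same approach as the paper: both deduce the result directly from Proposition~\ref{prop:corners} by invoking the identity $o(P)=i(P)+4$ and multiplying the outside-corner generating function by $x^{-4}$. Your additional remarks justifying the identity and explaining why $\mathcal{T}_c$ is unchanged are fine elaborations but not required.
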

\begin{proof}
We have already observed that the number of outside corners of a polyomino is given by the number of inside corners plus $4$. 
So we obtain the desired generating function from the generating function for directed convex polyominoes with respect to the number of outside corners and semi-perimeter (obtained in Proposition \ref{prop:corners}) and multiplying by the factor $x^{-4}$.
\end{proof}

 We recall that the \emph{site-perimeter} of a polyomino is the number of nearest-neighbour vacant cells. This parameter is of considerable interest to physicists and probabilists.
In \cite{DG} the generating function for the parallelogram polyominoes with respect to the site-perimeter was computed and in \cite{MBR} the authors find the generating function with respect to the site-perimeter for the family of bargraphs polyominoes.

The enumeration according to corners lets us freely obtain the enumeration of directed convex polyominoes according to the site-perimeter. 

\begin{proposition}
The generating function of directed convex polyominoes according to the site-perimeter and the semi-perimeter is given by
$$
\frac{
	(y^2z)^2
}{
	1-2\mathcal{T}_s + 
	( 1 - y^{-1} ) \mathcal{T}_s^2
},
$$
where  $\mathcal{T}_s =  \mathcal{T}_c( y^{-1}, y^2z ) $ and
$\mathcal{T}_c(x,z) = \frac{1 + (1-x)z - \sqrt{ (1 + (1-x)z)^2 - 4z }}{2}$.

\end{proposition}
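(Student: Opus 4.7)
The plan is to derive this generating function from the one in Corollary \ref{prop:insidecorners} by a simple change of variables, using a geometric identity that relates the site-perimeter, the semi-perimeter, and the number of inside corners for any convex polyomino. Because Corollary \ref{prop:insidecorners} already delivers the full bivariate generating function in $(z,x)$ for semi-perimeter and inside corners, no fresh work on cuts or forests is needed; only a substitution.

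\textbf{Step 1: Prove the identity $S = 2p - I$}, where $S$ denotes the site-perimeter, $p$ the semi-perimeter, and $I$ the number of inside corners. Each unit edge of the boundary (there are $2p$ of them) faces exactly one outside cell, so $2p = \sum_{c} k_c$, where $c$ ranges over outside cells with at least one neighbor in the polyomino and $k_c$ is the number of such neighbors. The site-perimeter is $S = \sum_{c} 1$, so $2p - S = \sum_c (k_c - 1)$. Row- and column-convexity exclude the configurations $k_c = 3$, $k_c = 4$, or $k_c = 2$ with two opposite neighbors, since any of these would force a row or column of the polyomino that fails to be connected. The only remaining possibility is $k_c = 2$ with two adjacent neighbors, which occurs precisely at an inside corner (the outside cell sits in the concave notch). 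Therefore $\sum_c (k_c - 1) = I$ and $S = 2p - I$.

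\textbf{Step 2: Encode the identity as a substitution.} By Corollary \ref{prop:insidecorners}, the generating function with $z$ marking semi-perimeter and $x$ marking inside corners is
$$
F(x,z) \;=\; \frac{z^2}{1 - 2\mathcal{T}_c + (1 - x)\mathcal{T}_c^2}, \qquad \mathcal{T}_c = \mathcal{T}_c(x,z).
$$
To mark $S$ by $y$ while keeping $z$ for $p$, we observe that
$$
y^S z^p \;=\; y^{2p - I} z^p \;=\; (y^2 z)^p\, (y^{-1})^I,
$$
so the desired generating function is $F(y^{-1},\,y^2 z)$.

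\textbf{Step 3: Carry out the substitution.} In the explicit formula, $z^2$ becomes $(y^2 z)^2$, the factor $(1 - x)$ becomes $(1 - y^{-1})$, and $\mathcal{T}_c$ becomes $\mathcal{T}_s := \mathcal{T}_c(y^{-1},\,y^2 z)$. This reproduces exactly the claimed expression. The main (and really the only non-routine) obstacle lies in Step~1: one must carefully invoke the convexity hypothesis to rule out outside cells with three or more polyomino neighbors, since without that exclusion the clean identity $S = 2p - I$ would pick up extra correction terms and the substitution argument would collapse. Once that geometric point is settled, the rest is pure algebraic bookkeeping.
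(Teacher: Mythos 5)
Your proof is correct and follows essentially the same route as the paper: both rely on the identity (site-perimeter) $= 2p - I$ for convex polyominoes and then substitute $x \mapsto y^{-1}$, $z \mapsto y^2 z$ into the generating function of Corollary~\ref{prop:insidecorners}. The only difference is that you actually justify the geometric identity (correctly, via convexity ruling out vacant cells with three or more, or two opposite, occupied neighbours), whereas the paper merely asserts it.
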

\begin{proof}
In a convex polyomino the site-perimeter is equal to the perimeter minus the number of inside corners.
According to Corollary \ref{prop:insidecorners}, let ${\cal D}_c(x,z)$ be the generating function for directed convex polyominoes where $x$ (resp. $z$) takes into account the number of inside corners (resp. the semi-perimeter) of polyominoes.
So, the generating function of directed convex polyominoes with respect to their site-perimeter and semi-perimeter is:
$${\cal D}_c(y^{-1},y^2z)$$
where $y$ (resp. $z$) takes into account the site-perimeter (resp. the semi-perimeter). 
\end{proof}

\begin{proposition}[\cite{Deutsch2003}]
The generating function of the symmetric directed convex polyominoes according 
to the semi-perimeter is given by
$$
\frac{z^2}{\sqrt{1 - 4z^2}}.
$$
\end{proposition}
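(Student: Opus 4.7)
The plan is to apply the three-step method of Section~4 to directed convex polyominoes invariant under the diagonal reflection $(x,y) \leftrightarrow (y,x)$ --- the only reflection preserving the distinguished root at the origin.

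First, I would verify that, under the bijection of Proposition~\ref{prop:directed}, the diagonal reflection corresponds to the involution
$$
(F_e, F_s, \lambda) \;\longmapsto\; (\widetilde{F_s}, \widetilde{F_e}, \widetilde{\lambda}),
$$
where $\widetilde{F}$ reverses the tree-sequence of a forest $F$ (each tree itself unchanged as an ordered tree) and $\widetilde{\lambda}$ is obtained by reversing $\lambda$ and simultaneously swapping $e \leftrightarrow s$. This is a direct check against the construction of $\Phi$: under reflection, rows and columns swap, ``rightmost in row'' becomes ``topmost in column'', east-to-west sibling order becomes north-to-south, and the roots of $F_e$ (in the top row, ordered left-to-right) reflect to dots in the rightmost column, whose natural top-to-bottom $F_s$-order is the \emph{reverse} of the original. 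Consequently, a directed convex polyomino is symmetric iff $F_s = \widetilde{F_e}$ and $\lambda = \widetilde{\lambda}$.

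Next, Step~2: I enumerate the admissible data. A cut of length $2m$ satisfying $\lambda = \widetilde{\lambda}$ and starting with $e$ is determined uniquely by its first half $\lambda_1 \cdots \lambda_m$, with $\lambda_1 = e$ forced and the remaining $m-1$ letters arbitrary in $\{e,s\}$; this yields $2^{m-1}$ symmetric cuts. Under the reverse-and-swap involution, the $m-1$ tree-carrying east steps pair bijectively with the $m-1$ tree-carrying south steps, and the condition $F_s = \widetilde{F_e}$ forces each pair to carry \emph{identical} trees. A paired tree of size $n$ contributes $z^n$ to $|F_e|$ and $z^n$ to $|F_s|$, hence $z^{2n}$ overall, so the generating function of a single tree in such a pair is $\mathcal{T}(z^2) = (1 - \sqrt{1 - 4z^2})/2$.

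Finally, Step~3 assembles the series. With the two special boundary steps (last east and first south) contributing the usual factor $z_e z_s = z^2$:
$$
\sum_{m \ge 1} z^2 \cdot 2^{m-1} \cdot \mathcal{T}(z^2)^{m-1} \;=\; \frac{z^2}{1 - 2\,\mathcal{T}(z^2)} \;=\; \frac{z^2}{\sqrt{1 - 4z^2}},
$$
using the identity $1 - 2\,\mathcal{T}(z^2) = \sqrt{1 - 4z^2}$ already exploited in the enumeration of all directed convex polyominoes. The main obstacle lies in the first step: correctly recognising that the root order within each forest is \emph{reversed} under reflection (so the symmetry condition is $F_s = \widetilde{F_e}$, not $F_s = F_e$) and that the cut's involution is reverse-and-swap. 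Once this identification is secured, the counting of symmetric cuts and the generating-function composition are entirely routine.
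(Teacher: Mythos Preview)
Your proof is correct and follows essentially the same three-step approach as the paper: identify the symmetry constraint on the triplet $(F_e,F_s,\lambda)$, observe that the data reduce to a half-cut together with a single forest whose trees are each counted with weight $z^2$, and compose $\mathcal{G}$ with $\mathcal{T}(z^2)$ to obtain $z^2/\sqrt{1-4z^2}$. The only difference is cosmetic: the paper states the fixed-point condition simply as $F_e=F_s$ with $\lambda$ equal to a half-cut followed by its mirror, whereas you track the root-order reversal explicitly and write $F_s=\widetilde{F_e}$, $\lambda=\widetilde{\lambda}$; since in either formulation $F_s$ is uniquely determined by $F_e$ and $\lambda$ by its first half, the enumeration is unaffected and the computation is identical.
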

\begin{proof}
Let $D$ be a symmetric directed convex polyomino and 
$\Phi(D) = (F_e, F_s, \lambda)$.
By construction of $\Phi$, since $D$ is symmetric,
$F_e = F_s$ and $\lambda$ is the concatenation of the half-cut $\lambda_{1/2}$
which is a path starting with an east step $e$, and the mirror of $\lambda_{1/2}$.

To compute the generating function, we need to determine the generating 
function of the half-cut and plug a tree inside. Then we need to count 
each node twice.

The generating function for $\lambda_{1/2}$ is
$$
\mathcal{G}(e,z_e) = \frac{z_e}{e} \left[ \frac{e}{1-(e+s)} \right].
$$

The generating function for the trees is $\mathcal{T}$.

The final result is 
$$
\mathcal{G}( \mathcal{T}(z^2), z^2 ) = \frac{z^2}{\sqrt{1 - 4z^2}}.
$$
\end{proof}

\section{Enumeration of directed $k$-convex polyominoes}
In this section we apply the method described in the previous section in order to obtain the generating function of directed $k$-convex polyominoes. We start recalling some basic definitions and enumerative results about $k$-convex polyominoes.  
\subsection{$k$-convexity constraint}
A \emph{path connecting two cells}, $b$ and $c$, of a convex polyomino $P$, is a path,
entirely contained in $P$, which starts from the center of $b$, and ends at 
the center of $c$ (see Fig.~\ref{paths}$(a)$).
Figure~\ref{paths}$(b)$ shows a monotone path connecting two cells of a polyomino.
Let $p=p_1\dots p_j$ be a path, each pair of 
steps $p_i p_{i+1}$ such that $p_i\neq p_{i+1}$, $1 \le i < j$, is called a 
\emph{change of direction} (see Fig.~\ref{paths}).

In \cite{Castiglione2003290} the authors prove that a polyomino is convex if and only if every pair of its cells can be joined by a monotone path.
Given $k\in\mathbb{N}$, a convex polyomino is said to be \emph{$k$-convex} if every pair of its cells can be connected by a monotone path with at most $k$ changes of direction. 
For the sake of clarity, we point out that a $k$-convex polyomino is also $h$-convex for every $h > k$.
We define the \emph{degree of convexity} of a convex polyomino $P$ as the smallest $k\in \mathbb{N}$ such that $P$ is $k$-convex.

Fig.~\ref{paths}$(b)$ shows a convex-polyomino with degree of convexity $k=2$.

\begin{figure}[htb]
\begin{center}
\includegraphics[width=.4\textwidth]{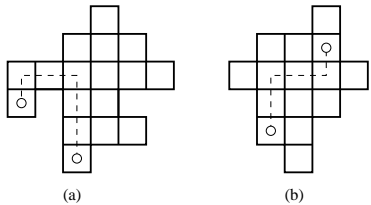}
\end{center}
\caption{$(a)$ a path connecting two cells of the polyomino with two changes of direction; $(b)$ a monotone path between two cells of the polyomino with two changes of direction. \label{paths}}
\end{figure}

In this paper we will deal with the class $\mathbb{D}_{\le k}$ (resp. $\mathbb{P}_{\le k}$) of directed $k$-convex polyominoes (resp. $k$-parallelogram polyominoes), i.e. the subclass of $k$-convex polyominoes which are also directed convex polyominoes (resp. parallelogram polyominoes).

Let $b=(i,j)$, $c=(i',j')$ be two cells of $D$. Without loss of generality, we can suppose that $i \le i'$. Now if $j'<j$, since $D$ is a directed convex polyomino, we can always join $b$ and $c$ by means of a monotone path with at most one change of direction (see Fig.~\ref{u-v}). 
Therefore, from now on, we will consider the case where $j'\geq j$. Let us define the \emph{bounce paths joining $b$ to $c$} as the two monotone paths internal to $D$ starting at $(i -\frac{3}{2},j-\frac{1}{2})$ (resp. at $(i-\frac{1}{2},j-\frac{3}{2})$) with an east (resp. north) unit step and ending at the center of $c$, denoted by $r_{b,c}$ (resp. $u_{b,c}$), where every side has maximal length (see Fig.~\ref{bouncepaths}$(a)$). 

\begin{figure}[htb]
\begin{center}
\includegraphics[width=.4\textwidth]{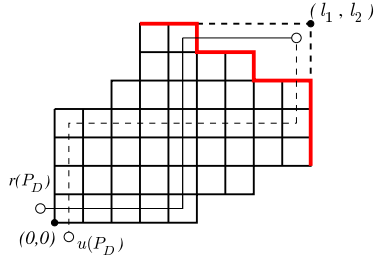}
\end{center}
\caption{$(a)$ A directed convex polyomino $D$ and its associated parallelogram polyomino $P_D$.}\label{fig:PD}
\end{figure}

We need to observe that the bounce paths joining two cells $b$ and $c$ are slightly different from the paths from $b$ to $c$, due to the presence of the tails. Observe that this is just a precaution to ensure that the two bounce paths are always different, and it will not affect the computation of the degree of convexity.   

The \emph{minimal bounce path joining $b$ to $c$} (denoted by $m_{b,c}$) is the bounce path joining $b$ to $c$ with the minimal number of changes of direction. If the two bounce paths joining $b$ to $c$ have the same number of changes of direction, by convention we define the minimal bounce path joining $b$ to $c$ to be the bounce path $r_{b,c}$.

In what follows, we will prove some combinatorial properties concerning directed convex polyominoes, understanding that they hold for parallelogram polyominoes a fortiori. 

\begin{proposition}
Let $D$ be a directed convex polyomino. Given $k\geq 1$, then $D$ is a directed $k$-convex polyomino if and only if $S$ can be connected to each cell of $D$ by means of a path having at most $k$ changes of direction. 
\end{proposition}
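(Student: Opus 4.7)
The forward implication is immediate from the definition of $k$-convexity, specialized to the pair consisting of $S$ and any other cell of $D$.

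For the converse, I would fix two cells $b=(i,j)$ and $c=(i',j')$ of $D$ with the goal of exhibiting a monotone path between them with at most $k$ changes of direction. Assume without loss of generality that $i\le i'$. The case $j'<j$ has already been treated just before this proposition: the directed convexity of $D$ yields a monotone path from $b$ to $c$ with at most one change of direction, which is at most $k$ since $k\ge 1$. So it remains to handle the case $j\le j'$, in which $c$ lies weakly northeast of $b$.

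The plan here is to use the hypothesis to fix a monotone NE path $\pi$ from $S=(1,1)$ to $c$ with at most $k$ changes of direction, and then to ``shortcut'' its initial portion using $b$. The cell $b$ lies in the rectangle $[1,i']\times[1,j']$ and, with respect to $\pi$, it is in exactly one of three positions: on $\pi$, strictly NW of $\pi$, or strictly SE of $\pi$. If $b$ lies on $\pi$, the tail of $\pi$ starting at $b$ already yields the desired path. If $b$ is strictly NW of $\pi$, let $(x^{\ast},j)$ be the first cell of $\pi$ in row $j$; then $x^{\ast}>i$, and by row-convexity the entire east segment from $(i,j)$ to $(x^{\ast},j)$ is contained in $D$. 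I would then define $\rho$ as the concatenation of this east segment with the portion of $\pi$ from $(x^{\ast},j)$ onwards. The case $b$ strictly SE of $\pi$ is symmetric, using column-convexity and the first cell of $\pi$ in column $i$.

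The point that requires care, and the main obstacle in this proof, is to check that $\rho$ has no more changes of direction than $\pi$. The idea is that the prefix of $\pi$ replaced by the new initial segment of $\rho$ is a non-trivial NE path reaching a cell distinct from $S$, hence it contains at least one maximal run in the direction perpendicular to the replacement; this run disappears in $\rho$, while the immediately following run of $\pi$ merges with the new initial segment. A short case analysis depending on whether the meeting cell lies in the interior of a run of $\pi$ or at the boundary between two runs should confirm that $\rho$ has at most as many runs as $\pi$, hence at most $k$ changes of direction, completing the argument.
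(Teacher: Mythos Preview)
Your proof is correct and follows essentially the same route as the paper: take a monotone path from $S$ to the northeast cell with at most $k$ changes of direction, then shortcut its initial portion by joining the other cell to the first point of the path lying in its row or column. The paper frames this by contradiction and invokes the minimal bounce path $m_{S,b}$ rather than an arbitrary path satisfying the hypothesis, but the idea is identical; in fact you are more explicit than the paper in checking that the shortcut does not increase the number of direction changes.
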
 
\begin{proof}
($\Rightarrow$) It follows from the definition of directed $k$-convex polyomino.\\
($\Leftarrow$) We suppose, by contradiction, that $D$ is not a directed $k$-convex polyomino, then there exist two cells $a=(i_1,j_1)$ and $b=(i_2,j_2)$ (with $a,b\neq S$) which have to be connected by means of a path with at least $k+1$ changes of direction. We can suppose that $j_2> j_1$ and $i_2>i_1$ (otherwise we can find a path connecting the two cells with at most one change of direction). We consider the bounce path $m_{S,b}$: by hypothesis it has at most $k$ changes of direction; so it is possible to find a monotone path connecting $a$ to $b$ with at most $k$ changes of direction. To find this path, we just have to consider $m_{S,b}$ and, when it passes through a cell $c$ in the same column or row of $a$, we join $c$ to $a$. This is contradiction, so we have the proof.
\end{proof}

\begin{figure}[htb]
\begin{center}
\includegraphics[width=0.32\textwidth]{u-v}
\end{center}
\caption{Two cells $b=(i,j)$, $c=(i',j')$ of a directed convex polyomino where $j'<j$. \label{u-v}}
\end{figure}

\begin{lemma}
\label{lem:bounce}
Let $D$ be a directed convex polyomino and let $b$ and $c$ be two cells of $D$. The number of changes of direction of the minimal bounce path joining $b$ to $c$ is less than or equal to the number of changes of direction of any path joining $b$ to $c$. 
\end{lemma}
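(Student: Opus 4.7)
The plan is to prove the lemma by a greedy transformation: I would show that any path $p$ from $b$ to $c$ can be modified into one of the two bounce paths ($r_{b,c}$ or $u_{b,c}$) without ever increasing the number of changes of direction. Since $m_{b,c}$ is by definition the bounce path having the minimum number of changes of direction, this will immediately yield the claimed inequality.

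The first step is to reduce to the case of monotone paths. If $p$ contains a change of direction opposite to the overall displacement from $b$ to $c$, the row-convexity and column-convexity of $D$ allow me to replace the corresponding detour by a straight segment, which either decreases or preserves the number of changes of direction; this uses the characterization of convex polyominoes already recalled in the paper, namely that any two cells are joined by a monotone path. Without loss of generality I then assume $i \le i'$ and $j \le j'$, so that the relevant monotone paths from $b$ to $c$ use only east and north unit steps.

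Next, given a monotone path $p$ starting with an east step, I would compare it to $r_{b,c}$ using a segment-lengthening operation: each maximal east or north run of $p$ is stretched to its maximum length inside $D$. Convexity of $D$ guarantees that the extended path stays inside the polyomino. After such an extension, two consecutive runs may collapse into a single one (strictly decreasing the number of changes of direction), or the count is preserved. Iterating this operation until no further extension is possible produces exactly the bounce path $r_{b,c}$, so the number of changes of direction of $r_{b,c}$ is at most that of $p$. A symmetric argument treats the case of a monotone path starting with a north step and the bounce path $u_{b,c}$. Taking the minimum over the two cases, the number of changes of direction of $m_{b,c}$ is at most that of $p$.

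The main obstacle I anticipate is the careful verification that the greedy segment-lengthening never introduces a new change of direction and always keeps the path inside $D$; the row- and column-convexity of $D$ are essential here. One must also handle the small offset between the starting points of $r_{b,c}$, $u_{b,c}$ and the cell $b$ (the \emph{tails} discussed in the paper): I would check that these short initial tails are absorbed into the first maximal run without creating an additional change of direction, so that the comparison between $p$ and the chosen bounce path remains valid throughout the transformation.
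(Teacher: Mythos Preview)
Your proposal is correct and follows essentially the same approach as the paper: both argue by a greedy transformation that pushes each change of direction of $p$ as far as possible in the current direction (your ``segment-lengthening''), observing that convexity keeps the path inside $D$ and that the number of turns can only weakly decrease, until one reaches $r_{b,c}$ (resp.\ $u_{b,c}$). You are a bit more explicit than the paper about the preliminary reduction to monotone paths and about absorbing the initial tail, but these are minor elaborations of the same argument.
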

\begin{proof}
Let $p$ be a path joining $b=(i,j)$ to $c=(i',j')$ and starting at the centre of $b$ with an  east unit step.
The other case, where the starting step is a north step, is completely analogous.
We consider the change of direction of $p$ from east to north (similarly from north to east) occurring in the first cell $t$ such that there exists a cell $t'$ on the east (resp. north) of $t$. We remark that if such a cell $t'$ does not exist then, adding an unit east step to $p$ such that it starts at $(i -\frac{3}{2},j-\frac{1}{2})$  we obtain the bounce path $r_{b,c}$, hence the result follows.
Starting from $p$, we can build a new path $p'$, which has a change of direction at $t'$ instead of at $t$ and joins $b$ to $c$ with a number of changes of direction which is less then or equal to the number of changes of direction of $p$.
If we repeat this operation, and at the end we add an east step to the obtained path, such that it starts at $(i -\frac{3}{2},j-\frac{1}{2})$, we obtain the bounce path $r_{b,c}$ joining $b$ to $c$.
We can conclude that the number of changes of direction of the bounce path $r_{b,c}$ (resp. $u_{b,c}$) is less than or equal to the number of changes of direction of any path joining $b$ to $c$, starting with an east (resp. north) unit step.
So we have the result for the minimal bounce path joining $b$ to $c$.  
\end{proof}

Therefore, it is worth defining the \emph{degree of a cell $b$} of a directed convex polyomino as the number of changes of direction of the minimal bounce path joining $S$ to $b$. 

Given a \pp\ $P$, we denote by $E$ the rightmost cell of the top row of $P$.
We define the \emph{bounce paths} of $P$ to be the two bounce paths joining $S$ to $E$ and we denote by $r(P)$ (resp. $u(P)$) the bounce path $r_{S,E}$ (resp. $u_{S,E}$)(see Fig.~\ref{bouncepaths}$(b),(c)$).
Henceforth, if no ambiguity occurs, we will write $u$ (resp. $r$) in place of $u(P)$ (resp. $r(P)$).
The \emph{minimal bounce path of $P$} (denoted by $m(P)$ or $m$) is the minimal bounce path joining $S$ to $E$.
%
\begin{figure}[htb]
\begin{center}
\includegraphics[width=1\textwidth]{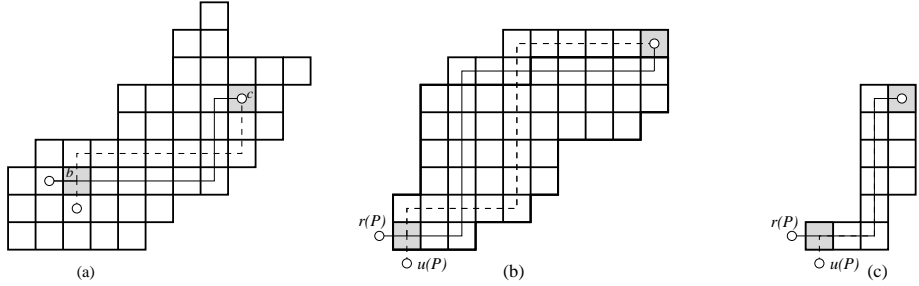}
\end{center}
\caption{$(a)$ The bounce paths joining two cells of a directed convex polyomino; $(b),(c)$ The bounce paths of a parallelogram polyomino. \label{bouncepaths}}
\end{figure}

\subsection{$k$-parallelogram polyominoes}

Now, we restrict our investigation to the case of $k$-parallelogram polyominoes. We will show that the convexity degree of a parallelogram polyomino can be expressed in terms of the heights of the two forests associated with the polyomino, via the mapping $\Phi$.

\medskip

Let $b$ and $c$ be two cells of a given \pp\ $P$. There may exist a cell starting 
from which the two paths $r_{b,c}$ and $u_{b,c}$ are superimposed. 
In this case, we denote such a cell by $J_{b,c}$ ($J$ is for ``joining").
Clearly $J_{b,c}$ may even coincide with $b$ and if such cell does not exist, 
we assume that $J_{b,c}$ coincides with $c$.
We denote by $J$ or $J(P)$ the cell $J_{S,E}$.

\begin{lemma}
\label{lem:difference_u_r}
Let $P$ be \pp\ and let $b$,$c$ be two cells of $P$. The number of changes of direction between $r_{b,c}$ and $u_{b,c}$
differ in the amount of $\pm 1$ if $J_{b,c} \neq c$ and by $0$ if $J_{b,c} = c$.
\end{lemma}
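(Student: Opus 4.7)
My plan is to perform a case analysis on whether $J_{b,c}=c$ or $J_{b,c}\neq c$, handling the first case directly via the geometry of the sub-region enclosed by $r_{b,c}$ and $u_{b,c}$, and reducing the second case to the first by a local analysis at $J_{b,c}$ combined with a recursion.

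I would first treat $J_{b,c}=c$. Writing $b=(i_b,j_b)$ and $c=(i_c,j_c)$, I would prove that the row of $b$ must extend east in $P$ up to at least column $i_c$ and the column of $b$ must extend north up to at least row $j_c$. Indeed, if the row of $b$ stopped short of $i_c$, the maximal-segment rule would force $r$ to turn north before column $i_c$; by the non-decreasing monotonicity of rightmost columns in the parallelogram polyomino $P$, $r$ would eventually reach row $j_c$ still strictly west of $c$, where it would intersect $u$'s final east segment in row $j_c$, contradicting $J_{b,c}=c$. A symmetric argument rules out the column of $b$ stopping short of $j_c$. Once these extent conditions hold, convexity of $P$ forces the rectangle $[i_b,i_c]\times[j_b,j_c]$ to lie entirely in $P$, and both $r$ and $u$ reduce to L-shapes with exactly one corner (or zero corners if $b=c$), proving $n_r=n_u$.

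I would next treat $J_{b,c}\neq c$. The portion of each path from $J_{b,c}$ to $c$ is common and contributes equally to both corner counts. At $J_{b,c}$ itself, the two paths must arrive from different directions, for otherwise they would already be superimposed one cell earlier, contradicting minimality of $J_{b,c}$. The common outgoing step from $J_{b,c}$ agrees with exactly one of these two incoming directions, so exactly one of $r,u$ turns at $J_{b,c}$, contributing $\pm 1$ to $n_r-n_u$. For the portion from $b$ to $J_{b,c}$ the restrictions of $r$ and $u$ meet only at these two endpoints, and inside the sub-parallelogram polyomino $R'$ they enclose they are themselves the bounce paths from $b$ to $J_{b,c}$; applying the first case to $R'$ with $J_{b,c}$ playing the role of $c$ gives equal corner counts on these two restrictions. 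The sub-case $J_{b,c}=b$ is covered identically, the enclosed sub-polyomino being trivial and the $\pm 1$ coming from the corner at $b$ produced by whichever initial step disagrees with the common first internal direction. Summing the three contributions yields $|n_r-n_u|=1$.

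The main obstacle is to justify the recursive step: the restrictions of $r$ and $u$ from $b$ to $J_{b,c}$ must really be the bounce paths of $R'$ with target $J_{b,c}$, so that the first case can be invoked. The bounce paths depend on both the ambient polyomino and the target cell through the maximality constraint, and both change when passing from $(P,c)$ to $(R',J_{b,c})$. The key is that on the restricted portion $r$ cannot go east of the column of $J_{b,c}$ (monotonicity would then forbid reaching $J_{b,c}$), so the $c$-constraint automatically tightens to the $J_{b,c}$-constraint, while the rightmost column of each row of $R'$ is by construction exactly the column where $r$ turns in that row, so the $P$-constraint also survives restriction. The symmetric remarks handle $u$ and north segments.
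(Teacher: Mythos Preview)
Your argument has a genuine gap in the first case. You claim that $J_{b,c}=c$ forces the whole rectangle $[i_b,i_c]\times[j_b,j_c]$ to lie in $P$, so that $r_{b,c}$ and $u_{b,c}$ are both $L$-paths. This is false. Take the parallelogram polyomino whose rows are, from bottom to top, $[1,3]$, $[1,4]$, $[1,4]$, $[2,4]$, with $b=(1,1)$ and $c=(4,4)$. Then $r$ goes $(1,1)\to(3,1)\to(3,4)\to(4,4)$ and $u$ goes $(1,1)\to(1,3)\to(4,3)\to(4,4)$; they cross transversally at $(3,3)$, meet again only at $c$, and each has two changes of direction. Neither the row of $b$ reaches column $i_c$ nor the column of $b$ reaches row $j_c$, and the cell $(4,1)$ is not in $P$.

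The flaw in your reasoning is the sentence ``$r$ would eventually reach row $j_c$ still strictly west of $c$, where it would intersect $u$'s final east segment in row $j_c$''. This presupposes that $u$'s final segment is the east run in row $j_c$, i.e.\ that the column of $b$ reaches row $j_c$; but that is exactly the hypothesis of your ``symmetric argument'', so the two halves of the argument lean on each other. When both extent conditions fail, the two bounce paths may cross in the interior without ever becoming superimposed. The same oversight infects your second case: the restrictions of $r$ and $u$ from $b$ to $J_{b,c}$ need not meet only at the endpoints, so the enclosed region $R'$ need not be a parallelogram polyomino, and your recursion does not apply.

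What is missing is precisely the crossing structure that the paper exploits directly: between $b$ and $J_{b,c}$ the paths $r_{b,c}$ and $u_{b,c}$ cross a certain number of times, and each crossing contributes exactly one change of direction to each path, so the counts on that portion agree; then at $J_{b,c}$ (when $J_{b,c}\neq c$) exactly one of the two paths turns, producing the $\pm 1$. If you want to repair your approach, the induction should be on the number of crossings rather than a single reduction to an $L$-shaped rectangle.
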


\begin{proof}
In the subpath between $b$ and $J_{b,c}$, each time $r_{b,c}$ crosses $u_{b,c}$, the number of 
changes of direction of $r_{b,c}$ and of $u_{b,c}$ increases by $1$.
If $J_{b,c}=c$ then the number of changes of direction of the two paths differ in the amount of $0$.
If $J_{b,c}\neq c$ then the number of changes of direction of the two paths differ in the amount of 
$\pm 1$. Precisely, the difference is equal to $1$ if $r_{b,c}$ has a change of direction at $J_{b,c}$ and to $-1$ 
if $u_{b,c}$ has a change of direction at $J_{b,c}$.
\end{proof}

\begin{lemma}
\label{lem:max_bounce}
Let $P$ be a \pp\ and let $b$ and $c$ be two cells of $P$. 
The number of changes of direction of the minimal bounce path $m$ is greater than or equal to the number of changes of direction of the minimal bounce path $m_{b,c}$ joining $b$ to $c$. 
\end{lemma}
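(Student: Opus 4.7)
My plan is to reduce the lemma to two separate inequalities on bounce paths and prove each via a monotonicity argument. Let $\#p$ denote the number of changes of direction of a path $p$. Since $\#m(P) = \min(\#r(P), \#u(P))$ and $\#m_{b,c} = \min(\#r_{b,c}, \#u_{b,c})$, it suffices to establish
\[
\#r(P) \geq \#r_{b,c} \quad \text{and} \quad \#u(P) \geq \#u_{b,c},
\]
after which the conclusion follows by taking the minimum on both sides.

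To prove $\#r(P) \geq \#r_{b,c}$, I would split the comparison into two sub-monotonicity steps: (a) $\#r_{b,c} \leq \#r_{S,c}$ when $b$ is weakly northeast of $S$, and (b) $\#r_{S,c} \leq \#r_{S,E} = \#r(P)$ when $c$ is weakly southwest of $E$. Each would be shown by induction on the number of cells separating the shifted endpoint (or starting cell) from its target, the inductive step being a local analysis of how a one-cell shift of the starting cell toward the southwest (or of the endpoint toward the northeast) modifies the bounce trajectory. The key structural fact enabling the induction is that, in a parallelogram polyomino, the rightmost column of row $j$ and the topmost row of column $i$ are respectively weakly increasing in $j$ and $i$; hence extending the trajectory either preserves the existing sequence of bounces or introduces new ones, and in particular cannot decrease the count. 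A symmetric argument for the north-starting bounce paths yields $\#u(P) \geq \#u_{b,c}$.

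The main obstacle is the local analysis in the inductive step: a one-cell shift can cascade through the subsequent bouncing pattern, especially if it crosses a step of the upper or lower boundary path of $P$. Handling this rigorously requires a case analysis on the position of the shifted cell relative to these boundary paths, together with careful tracking of how the maximal side length condition propagates through the altered trajectory. Once this local verification is complete, the induction chains the sub-monotonicities together and combines with the minimum reduction to give $\#m(P) \geq \#m_{b,c}$, as required.
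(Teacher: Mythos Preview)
Your reduction to the pair of inequalities $\#r(P)\ge\#r_{b,c}$ and $\#u(P)\ge\#u_{b,c}$ is sound, and in a parallelogram polyomino both sub-monotonicity steps (a) and (b) do hold: the key reason is that the right boundary $R(j)$ of row $j$ and the top boundary $T(i)$ of column $i$ are weakly increasing, so the successive ``bounce coordinates'' of $r_{b,c}$ dominate those of $r_{b',c}$ whenever $b$ is weakly northeast of $b'$. Incidentally, this domination of bounce sequences gives (a) directly, without the one-cell inductive shift and the cascading case analysis you anticipate as the main obstacle.

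The paper proceeds quite differently. Step (b), comparing $m_{b,c}$ to $m_{b,E}$, is handled by the one-line observation that $u_{b,E}$ (resp.\ $r_{b,E}$) coincides with $u_{b,c}$ (resp.\ $r_{b,c}$) up to the last change of direction of the latter, so $\#m_{b,E}\ge\#m_{b,c}$. For the comparison in the starting cell, the paper does \emph{not} prove a direct monotonicity like your (a); instead it argues by contradiction: assuming $\#m<\#m_{b,c}$, it truncates $m=m_{S,E}$ at the first cell sharing a row or column with $b$ to produce a path from $b$ to $E$ with at most $\#m$ changes, which would beat $m_{b,E}$ and contradict the earlier optimality Lemma (that the minimal bounce path is optimal among \emph{all} monotone paths). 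Thus the paper trades your explicit start-point monotonicity for a single appeal to Lemma~\ref{lem:bounce}. Your route is more self-contained and actually yields the stronger statement that $r$ and $u$ are each individually monotone in both endpoints; the paper's route is shorter once Lemma~\ref{lem:bounce} is in hand.
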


\begin{proof}
Let $b$ and $c$ be two cells of a \pp\ $P$.
Let $u_{b,c}$ be the bounce path joining $b$ to $c$ (the argument is similar if we consider the bounce path $r_{b,c}$).
Now we consider the bounce path $u_{b,E}$ joining $b$ to $E$.
We observe that $u_{b,E}$ coincides with $u_{b,c}$ in the subpath from $b$ to the last change of direction of $u_{b,c}$.
We deduce that the number of changes of direction of $u_{b,E}$ is greater than or equal to the number of changes of direction of $u_{b,c}$.
By a similar argument, the number of changes of direction of $r_{b,E}$ is greater than or equal to the number of changes of direction of $r_{b,c}$.
We deduce that the number $h''$ of changes of direction of $m_{b,E}$ is greater than or equal to the number $h'$ of changes of direction of $m_{b,c}$.
Now we suppose by contradiction that the number $h$ of changes of direction of the minimal bounce path $m$ of $P$ is less than $h'$ (namely, the number of changes of direction of $m_{b,c}$). Then we can find a path $p$ joining $b$ to $E$, (in fact we consider $m=m_{S,E}$ and when it pass through a cell $t$ in the same column or row of $b$, we join $t$ to $b$) with a number of changes of direction less than or equal to $h$, while we supposed by contradiction that $h<h'$. We also observed above that $h'\leq h''$, and this contradicts Lemma~\ref{lem:bounce}, because we have a path $p$ joining $b$ to $E$ with number of changes of direction less than the number of changes of direction of $m_{b,E}$.  
\end{proof}

\begin{lemma}
The degree of convexity of a \pp\ is equal to the number of changes of direction of the minimal bounce path $m$ of the \pp.\
\label{lem:minimal_convexity}
\end{lemma}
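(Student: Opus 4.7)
Write $|m|$ for the number of changes of direction of the minimal bounce path $m$ of $P$. The plan is to prove the equality by establishing the two inequalities: the degree of convexity of $P$ is at most $|m|$, and the degree of convexity of $P$ is at least $|m|$.

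For the lower bound, I would instantiate the $k$-convexity condition at the specific pair $(S,E)$. Since $m = m_{S,E}$, Lemma~\ref{lem:bounce} guarantees that every path from $S$ to $E$, and in particular every monotone one, uses at least $|m|$ changes of direction. Consequently, any $k$ witnessing that $P$ is $k$-convex must satisfy $k \geq |m|$, giving the lower bound on the degree of convexity.

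For the upper bound, I would fix two arbitrary cells $b$ and $c$ of $P$ and exhibit a monotone path between them with at most $|m|$ changes of direction. In the generic case, when one of $b$, $c$ lies strictly to the northeast of the other (say $c$ northeast of $b$), the minimal bounce path $m_{b,c}$ does the job: it is monotone by construction, it lies inside $P$, and it has at most $|m|$ changes of direction by Lemma~\ref{lem:max_bounce}; stripping the initial tail step, which is in the same direction as the first real side of the path and so introduces no additional change of direction, yields an honest monotone path from $b$ to $c$. The remaining positional configurations ($b$ and $c$ in the same row or column, or in opposite diagonal positions) are handled by the classical fact that any two cells of a convex polyomino can be joined by a monotone path with at most one change of direction, a number which does not exceed $|m|$ except in the trivial case $|m|=0$; but $|m|=0$ forces $P$ to be a single row or a single column, and in that degenerate case every pair of cells admits a change-free monotone connection. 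Combining the two inequalities yields the claim.

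The main, though modest, obstacle is the bookkeeping around the tails of the bounce paths: one must argue once and for all that removing the initial unit step of $m_{b,c}$ produces a monotone path lying in $P$ with the same count of direction changes. Since the tail and the first real side of the bounce path share the same direction by construction, the tail contributes no change of direction, so the verification is short and is essentially already flagged by the author's earlier remark that the tails do not affect the computation of the degree of convexity.
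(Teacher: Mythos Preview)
Your proposal is correct and follows essentially the same approach as the paper: both arguments combine Lemma~\ref{lem:bounce} (applied to the pair $(S,E)$) for the lower bound with Lemma~\ref{lem:max_bounce} for the upper bound. Your version is simply more explicit about the two inequalities, about the non-northeast positional configurations (which the paper dispatches earlier, before defining bounce paths), and about the tail bookkeeping, but the structure is the same.
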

\begin{proof}
Let $P$ be a \pp. 
According to Lemma~\ref{lem:bounce}, we just have to consider all the bounce paths of $P$.
Lemma~\ref{lem:max_bounce} states that the two cells $S$ and $E$ the two cells which require the maximal number of changes of direction to be connected.
We can conclude that the degree of convexity of $P$ is exactly equal to the number of changes of direction of the minimal bounce path $m$ of $P$.
\end{proof}

Let us now take in consideration the mapping $\Phi$, described in Proposition \ref{prop:bijection_pp_pair}, between \pps\ with semi-perimeter $n$ and pairs of forests with size $n-2$.
Let $r$ and $u$ be the two bounce paths of a \pp\ $P$.
At every change of direction the path $r$ (resp. $u$) individuates an enlightened cell. The sequence of these cells determines a path $d_r$ (resp. $d_u$) in a tree of $F_e$ or $F_s$ (see Figure~\ref{fig:dr_du} and Figure~\ref{fig:dr_du1} ). We observe that the last node of this path is a root of $F_e$ or $F_s$ and that its length is not necessarily maximal.   
By $l(d_r)$ (resp. $l(d_u)$) we denote the length of $d_r$ (resp. $d_u$) and we note that $l(d_r)$ (resp. $l(d_u)$) is the number of changes of direction of $r$ (resp. $u$).

\begin{center}
\begin{figure}[htb]
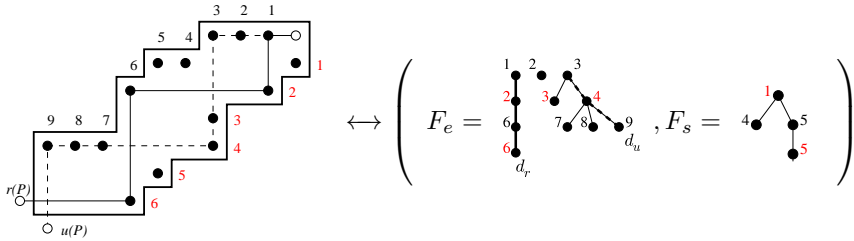

$
\begin{array}{c}
\includegraphics[width=.35\linewidth]{dr_duA}
\end{array}
\xleftrightarrow{\hspace{.2cm}}
\left(
\begin{array}{r}
F_e = 
\begin{array}{c}
\includegraphics[width=.15\linewidth]{dr_duB}
\end{array}
,
F_s =
\begin{array}{c}
\includegraphics[width=.075\linewidth]{dr_duC}
\end{array}
\end{array}
\right)
$
\caption{The simple paths $d_r$ and $d_u$ associated with the bounce paths $r(P)$ and $u(P)$ respectively.\label{fig:dr_du}}
\end{figure}
\end{center}

\begin{center}
\begin{figure}[htb]
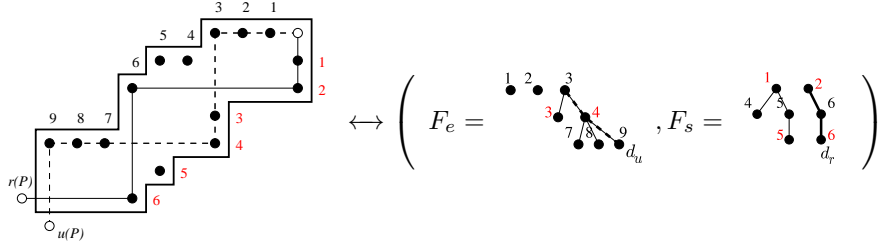

$
\begin{array}{c}
\includegraphics[width=.35\linewidth]{dr_du_a}
\end{array}
\xleftrightarrow{\hspace{.2cm}}
\left(
\begin{array}{r}
F_e = 
\begin{array}{c}
\includegraphics[width=.15\linewidth]{dr_du_b}
\end{array}
,
F_s =
\begin{array}{c}
\includegraphics[width=.1\linewidth]{dr_du_c}
\end{array}
\end{array}
\right)
$
\caption{The simple paths $d_r$ and $d_u$ associated with the bounce paths $r(P)$ and $u(P)$ respectively.\label{fig:dr_du1}}
\end{figure}
\end{center}


\begin{lemma}\label{lemma_h_d}
Let $P$ be a \pp\ and $\Phi(P)=(F_e,F_s)$. Then: 
$$ max(h(F_e), h(F_s)) = max( l(d_r), l(d_u)).$$
\end{lemma}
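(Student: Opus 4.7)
The plan is to prove both inequalities between $\max(l(d_r),l(d_u))$ and $\max(h(F_e),h(F_s))$.

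For the direction $\max(l(d_r),l(d_u)) \le \max(h(F_e),h(F_s))$, I will verify that the turn cells $c_1,\ldots,c_k$ of $r$ form a leaf-to-root chain in a single tree of $F_e \cup F_s$. At each east-to-north turn the cell $c_i$ is rightmost in its row (otherwise the east segment would extend), and the subsequent maximal north segment terminates at the topmost enlightened cell $c_{i+1}$ of $c_i$'s column; by the column-father rule of $\Phi$, $c_{i+1}$ is the father of $c_i$. The north-to-east case is symmetric. Iterating, $d_r$ is a root-to-node path in a single tree of $F_e \cup F_s$, so $l(d_r) \le \max(h(F_e),h(F_s))$, and the same reasoning applies to $l(d_u)$.

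For the reverse inequality, I will fix a tree $T$ in $F_e \cup F_s$ with $h(T) = \max(h(F_e),h(F_s))$ and a maximum-length root-to-leaf path $v_0,v_1,\ldots,v_{h-1}$ with $h = h(T)$. By the alternating father rules of $\Phi$, consecutive $v_i,v_{i+1}$ share alternately a row and a column of $P$, so the reversed sequence $v_{h-1},\ldots,v_0$ is the turn sequence of a monotone staircase inside $P$. I extend this staircase to a monotone path $\gamma$ from $S$ to $E$ by prepending an east or north segment from $S$ to $v_{h-1}$ and appending an east or north segment from $v_0$ to $E$; the resulting $\gamma$ has exactly $h$ interior turns.

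The crux is that $\gamma$ must coincide with $r$ or with $u$, which amounts to verifying that each segment of $\gamma$ is maximal in $P$. Maximality between consecutive $v_{i+1}$ and $v_i$ is immediate because $v_{i+1}$ is rightmost in its row or topmost in its column by the father rule, so the next cell along the current direction lies outside $P$; maximality of the final segment to $E$ is immediate from the definition of $E$. The main obstacle I anticipate is the maximality of the initial segment from $S$ to $v_{h-1}$, which I will address by choosing $v_{h-1}$ carefully among the maximum-depth leaves of $T$: since each father-to-son edge in $T$ strictly decreases either the row or the column of the current cell, one of the maximum-length chains necessarily terminates at a leaf lying in the bottom row or in the leftmost column of $P$, so that the initial run from $S$ is forced to be a single maximal east or north segment. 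Granting this, $\gamma$ equals one of $r,u$ by the uniqueness of bounce paths from $S$, and so $\max(l(d_r),l(d_u)) \ge h$, completing the proof.
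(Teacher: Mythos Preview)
Your argument for the inequality $\max(l(d_r),l(d_u)) \le \max(h(F_e),h(F_s))$ is fine and matches the paper's: $d_r$ and $d_u$ are root-to-node chains in trees of $F_e\cup F_s$, so their lengths are bounded by the forest heights.

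The gap is in the reverse inequality. You take a maximum-length root-to-leaf chain $v_0,\ldots,v_{h-1}$ and want to show that the associated staircase $\gamma$ coincides with $r$ or $u$. For this you need the \emph{initial} segment from $S$ to $v_{h-1}$ to be maximal, and your justification is that ``since each father-to-son edge in $T$ strictly decreases either the row or the column of the current cell, one of the maximum-length chains necessarily terminates at a leaf lying in the bottom row or in the leftmost column of $P$''. This deduction does not follow: the fact that each edge decreases a coordinate only tells you that leaves are south-west of the root, not that some deepest leaf reaches the boundary of $P$. Worse, even reaching the bottom row (or leftmost column) is not enough: for $\gamma$ to be a bounce path starting at $S$, the leaf $v_{h-1}$ must be precisely the \emph{rightmost} cell of the bottom row (respectively the \emph{topmost} cell of the leftmost column), i.e.\ exactly the first turn cell of $r$ (respectively $u$). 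Asserting that some maximum-length chain ends at one of these two specific cells is equivalent to asserting that $d_r$ or $d_u$ already realises the maximum height, which is the statement you are trying to prove.

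The paper avoids this circularity. Given a maximal chain $p$ in $F_e$ or $F_s$, it builds a bounce path $g$ between the two cells at the extremities of $p$, then extends it to a bounce path $g'$ from the leaf to $E$ (not from $S$). The number of changes of direction of $g'$ equals the length of $p$, and then Lemma~\ref{lem:max_bounce} together with Lemma~\ref{lem:difference_u_r} bounds the changes of direction of $g'$ by $\max(l(d_r),l(d_u))$. In other words, the paper does not try to identify $\gamma$ with $r$ or $u$; it only needs that no bounce path to $E$ can have more turns than the worse of $r,u$, which is exactly the content of the earlier bounce-path lemmas. To repair your argument you should invoke those lemmas rather than attempt to force the deepest leaf to sit at the first turn of $r$ or $u$.
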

\begin{proof}
Let $p$ be a maximal path of a tree of $F_e$ (resp. $F_s$) starting at a root $r$ of $F_e$ (resp. $F_s$).
By construction of $\Phi$, we can find a monotone path $g$ internal to $P$, such that:
\begin{itemize}
\item $g$ is a bounce path joining the cells associated with the extremities of 
      $p$;
\item $g$ changes direction at each cell associated with the nodes of $p$, except for the root $r$.
\end{itemize}
Now we define $g'$ as the bounce path obtained from $g$ by joining $r$ to $E$. 
We remark that the length of $p$, namely the height of $F_e$ (resp. $F_s$), is equal to the number of changes of direction of $g'$. 

According to Lemma~\ref{lem:difference_u_r} and Lemma~\ref{lem:max_bounce}, we have that the number of changes of direction of $g'$ is less than or equal to the maximum between $l(d_r)$ and $l(d_u)$.
Hence we have: $\mbox{ max }(h(F_e),h(F_s))\leq \mbox{ max }(l(d_r),l(d_u)).$
Since $d_r$ and $d_u$ are two chains of $F_e$ or $F_s$, we conclude with the 
statement.
\end{proof}

\begin{lemma}\label{prop:equivalence_height}
Let $P$ be a \pp\ and $\Phi(P)=(F_e,F_s)$.
The following equivalences hold:
\begin{description}
	\item{i.} $h(F_e) = h(F_s) \Leftrightarrow d_r \text{ and } d_u \text{ are chains of different trees} \Leftrightarrow l(d_r) = l(d_u);$
	\item{ii.} $h(F_e) > h(F_s) \Leftrightarrow d_r \text{ and } d_u \text{ are chains of }F_e \Leftrightarrow |l(d_r)-l(d_u)|=1 \text{ and } d_r,d_u \in F_e;$
  \item{iii.} $h(F_s) > h(F_e) \Leftrightarrow d_r \text{ and } d_u \text{ are chains of }F_s \Leftrightarrow |l(d_r)-l(d_u)|=1 \text{ and } d_r,d_u \in F_s.$
\end{description}
\end{lemma}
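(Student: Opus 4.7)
The plan is to combine Lemma \ref{lem:difference_u_r} (which gives $|l(d_r) - l(d_u)| \in \{0,1\}$) with Lemma \ref{lemma_h_d} (which gives $\max(h(F_e),h(F_s)) = \max(l(d_r),l(d_u))$), adding one new parity observation about the final direction of the bounce paths $r$ and $u$. Specifically, since $r$ starts with an east step, after $k$ changes of direction its current direction is east if $k$ is even and north if $k$ is odd. The last node of $d_r$---the cell at which $r$ performs its last turn---must lie in the top row when $l(d_r)$ is even (so that node is a root of $F_e$) and in the rightmost column when $l(d_r)$ is odd (so that node is a root of $F_s$). Symmetrically, since $u$ starts north, the last node of $d_u$ belongs to $F_s$ when $l(d_u)$ is even and to $F_e$ when $l(d_u)$ is odd.

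Combining this parity observation with Lemma \ref{lem:difference_u_r}, the ``middle'' equivalences in each of (i), (ii), (iii) follow directly. Equal lengths $l(d_r)=l(d_u)$ force $l(d_r)$ and $l(d_u)$ to have the same parity, hence $d_r$ and $d_u$ lie in different forests---one in a tree of $F_e$, the other in a tree of $F_s$. A difference of exactly $1$ forces opposite parities, hence both chains lie in the same forest: in $F_e$ if $l(d_r)$ is even and $l(d_u)$ is odd, and in $F_s$ otherwise. These three sub-cases are mutually exclusive and jointly exhaustive, and they match precisely the trichotomy $h(F_e) = h(F_s)$, $h(F_e) > h(F_s)$, $h(F_s) > h(F_e)$.

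It then suffices to prove the forward implication in each case. For (i), when $d_r$ and $d_u$ sit in different forests and $l(d_r) = l(d_u) = \ell$, each forest contains a chain of length $\ell$, so $h(F_e) \geq \ell$ and $h(F_s) \geq \ell$; together with $\max(h(F_e), h(F_s)) = \ell$ given by Lemma \ref{lemma_h_d}, this forces $h(F_e) = h(F_s) = \ell$. For (ii), with both chains in $F_e$ and, without loss of generality, $l(d_r) = l(d_u) + 1$, the same reasoning yields $h(F_e) = l(d_r) \geq h(F_s)$. The only real subtlety is upgrading this to a strict inequality; the cleanest route is an appeal to (i): if we had $h(F_e) = h(F_s)$, then by the already-established equivalence in (i) the chains $d_r, d_u$ would have to lie in different forests, contradicting the case assumption $d_r, d_u \in F_e$. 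Hence $h(F_e) > h(F_s)$, and case (iii) is entirely symmetric. I expect this partition/contradiction step---rather than any additional geometric construction---to be the only real obstacle in the proof.
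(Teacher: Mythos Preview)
Your parity observation is correct and gives a clean way to see which forest each of $d_r,d_u$ lands in; combined with Lemma~\ref{lem:difference_u_r} it does establish the second biconditional in each of (i)--(iii), and your proof of the forward implication in~(i) is fine. However, the step you flag as ``the only real subtlety'' is in fact a genuine gap: when you write ``if we had $h(F_e)=h(F_s)$, then by the already-established equivalence in~(i) the chains $d_r,d_u$ would have to lie in different forests'', you are invoking the direction $h(F_e)=h(F_s)\Rightarrow\text{different forests}$, which is precisely the converse of what you proved in~(i). At that point you have only $A_1\Rightarrow B_1$, $A_2\Rightarrow B_1\vee B_2$, $A_3\Rightarrow B_1\vee B_3$ (where $A_i$ are the ``forest'' alternatives and $B_i$ the height alternatives), and these do \emph{not} force the desired bijection between the two trichotomies: nothing you have written rules out, for instance, that some polyomino with $d_r,d_u\in F_e$ still has $h(F_e)=h(F_s)$. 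The partition argument only closes once all three forward implications are proved with \emph{strict} inequalities, so you cannot bootstrap the converse of~(i) from inside the proof of~(ii).

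The paper closes exactly this gap with the extra geometric input you hoped to avoid: assuming $h(F_e)=h(F_s)$ in the $J\neq E$ case, it takes the maximal chain of $F_s$ ending at the rightmost leaf and argues, via $\Phi^{-1}$, that such a chain must coincide with one of $d_r,d_u$, contradicting $d_r,d_u\in F_e$. Equivalently, one needs the fact that any chain in $F_s$ of length $\max(l(d_r),l(d_u))$ would, traced back in $P$, produce a bounce path from $S$ to $E$ with that many changes of direction and hence be $r$ or $u$. Some such independent argument is required; your parity/partition framework alone does not supply it.
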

\begin{proof}
We consider the two bounce paths $r$ and $u$ of $P$ and the cell $J(P)$.
There are two cases: $J=E$ or $J\neq E$.

\begin{description}
\item[case $J=E$.] The last change of direction of $r$ is in the top row or in the rightmost column.
Without loss of generality, we will suppose that the last change of direction of $r$ is 
in the top column.
Then, since $J=E$, the last change of direction of $u$ is in the rightmost column.
By construction of $\Phi$, we deduce that $d_r$ and $d_u$ are chains of trees of
different forests.

According to Lemma~\ref{lem:difference_u_r}, $l(d_r)=l(d_u)$.

To summarize, $d_r$ and $d_u$ are in trees of different forests, $l(d_r)=l(d_u)$ and 
Lemma~\ref{lemma_h_d} says that $max(h(F_e), h(F_s)) = max( l(d_r), l(d_u) )$.
It follows that $h(F_e) = h(F_s)$.

\item [case $J\neq E$.] 
According to Lemma~\ref{lem:difference_u_r}, $| l(d_r) - l(d_u) |=1$.

Without loss of generality, we will suppose that the number of changes of direction of $r$ is greater than the number of changes of direction of $u$. Starting from $J\neq E$ the two paths $r$ and $u$ are superimposed and so the chains $d_r$ 
and $d_u$ are in the same tree of $F_e$ or $F_s$.

Without loss of generality, we will suppose that $d_r$ and $d_u$ are chains of a tree of $F_e$. 
According to Lemma~\ref{lemma_h_d}, it follows that $h(F_e) \ge h(F_s)$, we just have to prove that $h(F_e) \neq h(F_s)$.
We suppose, that $h(F_e)=h(F_s)$.  
We consider the maximal chain $d_1$ (resp. $d_2$) of $F_e$ (resp. $F_s$) where its extremity is the rightmost leaf of $F_e$ (resp. $F_s$). By construction of $\Phi^{-1}$, we can prove that $d_r$ and $d_u$ coincide with $d_1$ and $d_2$.
And this leads to a contradiction because of $d_r$ and $d_u$ are chains of the same tree in $F_e$.
\end{description}
\end{proof}

\begin{proposition}
Let $P$ be a \pp, and $\Phi(P)=(F_e,F_s)$.
The degree of convexity of $P$ is equal to
$$
max( h(F_e), h(F_s)) -
\left\{
\begin{array}{ll}
0 & \text{if }  h(F_e) = h(F_s); \\
1 & \text{otherwise}.
\end{array}
\right.
$$
\label{prop:minimal_convexity_PP}
\end{proposition}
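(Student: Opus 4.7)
The plan is to chain together the preceding three lemmas. By Lemma~\ref{lem:minimal_convexity} the degree of convexity of $P$ equals the number of changes of direction of the minimal bounce path $m = m(P)$. Since $m$ is, by definition, the bounce path among $r$ and $u$ with the smaller number of changes of direction, and since $l(d_r)$ and $l(d_u)$ were defined to be exactly the numbers of changes of direction of $r$ and $u$, respectively, the degree of convexity equals $\min(l(d_r), l(d_u))$.

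Now I split into the two cases distinguished in Lemma~\ref{prop:equivalence_height}. If $h(F_e)=h(F_s)$, then case (i) of that lemma gives $l(d_r)=l(d_u)$, so
$$
\min(l(d_r),l(d_u)) = \max(l(d_r),l(d_u)),
$$
and Lemma~\ref{lemma_h_d} converts this to $\max(h(F_e),h(F_s))$, matching the claim with the ``$0$'' branch.

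If $h(F_e)\neq h(F_s)$, then case (ii) or (iii) of Lemma~\ref{prop:equivalence_height} gives $|l(d_r)-l(d_u)|=1$, whence
$$
\min(l(d_r),l(d_u)) = \max(l(d_r),l(d_u)) - 1,
$$
and Lemma~\ref{lemma_h_d} again turns the right-hand side into $\max(h(F_e),h(F_s))-1$, matching the ``$-1$'' branch.

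No step in this combination is delicate — the real work is already concentrated in Lemmas~\ref{lemma_h_d} and~\ref{prop:equivalence_height}. The only thing worth being careful about is the implicit identification ``number of changes of direction of $r$ (resp.\ $u$) equals $l(d_r)$ (resp.\ $l(d_u)$)'', which is built into the definition of $d_r$, $d_u$ given just before Lemma~\ref{lemma_h_d}, and the identification of $m$ as the $r/u$ that minimizes this quantity, which follows from the definition of the minimal bounce path. With these observations recorded, the proof is a one-line case distinction.
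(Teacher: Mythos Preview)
Your proof is correct and follows essentially the same route as the paper's own argument: both invoke Lemma~\ref{lem:minimal_convexity} to reduce to $\min(l(d_r),l(d_u))$, then split cases via Lemma~\ref{prop:equivalence_height} and finish with Lemma~\ref{lemma_h_d}. If anything, you are slightly more explicit than the paper in justifying why the degree of convexity equals $\min(l(d_r),l(d_u))$.
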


\begin{proof}
According to Lemma~\ref{lem:minimal_convexity}, we know that the degree of convexity of a \pp\ $P$ is equal to $min( l(d_r), l(d_u) )$.
According to Lemma~\ref{prop:equivalence_height},
\begin{itemize} 
\item if $h(F_e) = h(F_s)$, then $l(d_r) = l(d_u)$. 
Using Lemma~\ref{lemma_h_d}, the degree of convexity of $P$ is equal to $max(h(F_e), h(F_s))$ .
\item if $h(F_e) \neq h(F_s)$, then $|l(d_r) - l(d_u)| =1$.
Using Lemma~\ref{lemma_h_d}, the degree of convexity of $P$ is equal to $max(h(F_e), h(F_s))-1$.
\end{itemize}
\end{proof}

We are now ready to calculate the generating function for $k$-parallelogram polyominoes, which was determined for the first time in \cite{BatFedRinSoc}, by using a purely analytic method. The following proposition provides a bijective proof of this enumerative result.
\begin{proposition}[\cite{BatFedRinSoc}]\label{prop:fgkpar}
The generating function of $k$-\pps\ with respect to the semi-perimeter is given by
$$
\mathcal{P}_{\le k} = z^2\left( \frac{F_{k+2}}{F_{k+3}} \right)^2 - z^2
\left( \frac{F_{k+2}}{F_{k+3}} - \frac{F_{k+1}}{F_{k+2}} \right)^2,
$$
where $F_k$ are the Fibonacci polynomials.
\end{proposition}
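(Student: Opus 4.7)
The plan is to apply the three-step method from Section~4 to the class $\mathbb{P}_{\le k}$. By Corollary~\ref{cor:parallelogram}, \pps\ of semi-perimeter $n+2$ are in bijection with pairs of forests $(F_e, F_s)$ of total size $n$ via $\Phi$, so the task reduces to enumerating such pairs under the height constraint dictated by Proposition~\ref{prop:minimal_convexity_PP}.

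Step~1 (combinatorial characterization). I would first translate ``$P$ is $k$-convex'' into a condition on $\Phi(P) = (F_e, F_s)$. Proposition~\ref{prop:minimal_convexity_PP} says that the degree of convexity of $P$ equals $\max(h(F_e),h(F_s))$ when $h(F_e)=h(F_s)$, and $\max(h(F_e),h(F_s))-1$ otherwise. A direct case analysis then yields: $P\in\mathbb{P}_{\le k}$ if and only if both forests have height at most $k+1$ and are not simultaneously of height exactly $k+1$. Indeed, when the two heights differ and both are $\le k+1$ the degree is $\le k$; when the two heights are equal the degree is that common value, so $k+1$ is forbidden while any value $\le k$ is allowed.

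Step~2 (generating functions of the pieces). A forest whose trees each have height at most $j$ is a sequence of trees counted by $\mathcal{T}_{\le j} = zF_j/F_{j+1}$, hence its generating function is
$$
\frac{1}{1-\mathcal{T}_{\le j}} \;=\; \frac{F_{j+1}}{F_{j+1}-zF_j} \;=\; \frac{F_{j+1}}{F_{j+2}},
$$
using the recurrence $F_{j+2}=F_{j+1}-zF_j$. Taking $j=k+1$ gives the generating function $F_{k+2}/F_{k+3}$ of forests with all trees of height $\le k+1$; subtracting the $j=k$ case produces the generating function $F_{k+2}/F_{k+3} - F_{k+1}/F_{k+2}$ of forests whose maximum tree-height is exactly $k+1$.

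Step~3 (assembly). Pairs $(F_e,F_s)$ in which both forests have height $\le k+1$ have generating function $\bigl(F_{k+2}/F_{k+3}\bigr)^{2}$, while pairs in which both forests have height exactly $k+1$ have generating function $\bigl(F_{k+2}/F_{k+3}-F_{k+1}/F_{k+2}\bigr)^{2}$. Subtracting and multiplying by $z^2$ to account for the ``$+2$'' in the semi-perimeter gives exactly the stated formula. I expect no substantive obstacle, since all the combinatorial work is done by Proposition~\ref{prop:minimal_convexity_PP}; the only point that deserves care is verifying that the inclusion-exclusion in Step~3 correctly removes only the polyominoes of degree $k+1$. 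Pairs with heights $(k+1,k+1)$ produce degree $k+1$ and must be excluded, whereas pairs with unequal heights (one of which may be $k+1$) produce degree $\max-1\le k$ and must be retained, which is exactly the effect of the difference above.
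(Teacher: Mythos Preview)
Your proposal is correct and follows essentially the same route as the paper's proof: translate the $k$-convexity constraint into the height condition on $(F_e,F_s)$ via Proposition~\ref{prop:minimal_convexity_PP}, then subtract from the pairs of forests of height $\le k+1$ those with both heights exactly $k+1$, and multiply by $z^2$. The only cosmetic difference is that the paper obtains the forest generating function $F_{k+2}/F_{k+3}$ by invoking the tree--forest bijection (removing the root from a tree of height $\le k+2$), whereas you sum the geometric series $1/(1-\mathcal{T}_{\le k+1})$; these are of course equivalent.
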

\begin{proof}
First we recall that ordered trees of size $n+1$ are bijective to ordered forests with size $n$ by removing the root; so it follows that the function $f =\left( \frac{F_{k+2}}{F_{k+3}} \right)^2$ is the 
generating function of pairs of ordered forests with height less than or equal to $k+1$ with respect to their size.
According to Proposition~\ref{prop:minimal_convexity_PP}, to obtain $\mathcal{P}_{\leq k}$
we need to remove from $f$ the generating function 
$\left( \frac{F_{k+2}}{F_{k+3}} - \frac{F_{k+1}}{F_{k+2}} \right)^2$ of pairs of ordered forests with height exactly equal to $k+2$ and finally multiply by $z^2$.
\end{proof}

\subsection{Directed $k$-convex polyominoes}

In this section we extend the method used in the previous section in order to count directed $k$-convex polyominoes.
Let $D$ be a directed convex polyomino; a sequence of cells of $P_D$ is naturally determined by the points where the bounce 
paths $r(P_D)$ and $u(P_D)$ cross each other. The sequence of these cells (denoted by $L_h=(i_h,j_h)$) starts from $S$ and ends with $J$. Let $M$ be the index such that $L_M=J$.
We extend $(L_h)_h$ to a new sequence $(I_h)_h$ by defining 
$I_{h\le M} = L_h$,  
and $I_{h > M}$ by the cells where the two superimposed bounce paths 
change direction. Each cell $I_h$ will be labelled by $h$.

From now on, we will refer to the set of directed convex polyominoes such that $J(P_D)=E$, as {\em flat} directed convex polyominoes. 

\begin{lemma}\label{lem:degree_ih}
The degree of the cell $I_h$ is $h$ if $h \leq M$, otherwise is $h-1$.
\end{lemma}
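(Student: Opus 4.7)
My plan is to compute directly the number of changes of direction of the two bounce paths from $S$ to $I_h$ -- namely $r_{S,I_h}$ and $u_{S,I_h}$ in $P_D$ -- and then invoke Lemma~\ref{lem:bounce}, which identifies the degree of a cell with the minimum of these two counts. As a preliminary structural fact, I first establish: for $1\le i\le M$, each of $r$ and $u$ has exactly one change of direction in the sub-path strictly between $L_{i-1}$ and $L_i$, and at each $L_i$ with $i<M$ neither path changes direction (they cross perpendicularly without turning). The reason is that $r$ and $u$ are monotone and have sides of maximal length, and do not cross strictly inside the bubble $(L_{i-1},L_i)$; any extra turn would, by the maximal-sides property, force an additional crossing inside the bubble, contradicting consecutiveness.

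In the case $h\le M$ we have $I_h=L_h$, and $r_{S,I_h}$ (respectively $u_{S,I_h}$) is the initial portion of $r$ (resp.\ $u$) up to $L_h$. By the structural fact this prefix accumulates exactly $h$ changes of direction (one per bubble traversed; none at the intermediate crossings $L_1,\dots,L_{h-1}$ nor at the endpoint $L_h$). Since the minimal bounce path $m_{S,L_h}$ is one of these two, Lemma~\ref{lem:bounce} gives that the degree of $L_h$ equals $h$.

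In the case $h>M$, $I_h$ lies strictly after $J$ on the superimposed part, so $J\neq E$. I decompose $r_{S,I_h}$ into the initial portion up to $J=L_M$ (contributing $M$ changes of direction by the structural fact), the superimposed segment from $J$ to $I_h$ (contributing exactly $h-1-M$ changes, namely the internal corners $I_{M+1},\dots,I_{h-1}$, since the endpoint $I_h$ does not count), and possibly a change of direction at $J$ itself; the analogous decomposition holds for $u_{S,I_h}$. Lemma~\ref{lem:difference_u_r}, applied to $(b,c)=(S,E)$ with $J\neq E$, says that the total changes of direction of $r$ and $u$ differ by exactly one, and this extra corner is located at $J$ for precisely one of the two paths. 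Hence one of $r_{S,I_h}$ and $u_{S,I_h}$ has $h-1$ changes of direction and the other has $h$, and Lemma~\ref{lem:bounce} yields degree $h-1$. The main obstacle is making the ``one corner per bubble'' structural claim fully rigorous; once it is in place, everything else reduces to careful bookkeeping enabled by Lemmas~\ref{lem:bounce} and~\ref{lem:difference_u_r}.
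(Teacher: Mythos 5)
Your proof is correct and is essentially the paper's own argument: both identify the bounce paths from $S$ to $I_h$ with prefixes of the global bounce paths $r$ and $u$, and count one change of direction per segment between consecutive crossings before $J$, a single asymmetric turn at $J$ carried by exactly one of the two paths, and one further turn per corner of the superimposed part thereafter. The only difference is presentational --- you track both $r_{S,I_h}$ and $u_{S,I_h}$ and take the minimum, invoking Lemma~\ref{lem:difference_u_r} to locate the extra turn at $J$, whereas the paper follows only the minimal bounce path $m$ --- and your ``one corner per bubble'' claim is precisely the geometric fact the paper itself relies on (in the proofs of Lemma~\ref{lem:difference_u_r} and of this lemma) without a more detailed justification, so you are at the same level of rigor.
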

\begin{proof}
By definition, the degree of $I_h$ is the number of changes of direction of the minimal 
bounce path $m_{S,I_h}$ joining $S$ to $I_h$. By construction of $I_h$, 
$m_{S,I_h}$ coincides with a first part of the minimal bounce path $m$ of the \pp\ $P_D$ associated with the directed convex polyomino $D$; so we can consider $m$ in place of $m_{S,I_h}$. The minimal bounce path $m$ changes direction to reach $I_{h+1}$ from $I_h$, and, since the degree of $I_0$ is $0$, then the degree of $I_h$ is $h$ if $h\leq M$. Furthermore, at $I_{M+1}$, the bounce path $m$ does not change direction and so the degree of $I_{M+1}$ is $M$. If $h>M+1$, the path $m$ changes direction at each $I_h$, hence, in this case, the degree of $I_h$ is $h-1$. 
\end{proof}

\begin{corollary}\label{lem:maximum_label}
Let $D$ be a directed convex polyomino, the maximal label of $I_h$ is equal to the degree of 
convexity of $P_D$.
\end{corollary}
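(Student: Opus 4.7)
The plan is to reformulate the statement via Lemma~\ref{lem:minimal_convexity}: since the degree of convexity of $P_D$ equals the number of changes of direction of the minimal bounce path $m$ of $P_D$, it suffices to show that this number coincides with the maximal label $N$ of the sequence $(I_h)$. I would then split the argument into two cases according to whether $J(P_D) = E$ or not.

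In the flat case $J = E$, the crossing sequence ends at $L_M = J = E$, so no further cells are added and $N = M$. Applying Lemma~\ref{lem:degree_ih} directly to the cell $E = I_M$ gives that $m = m_{S,E}$ has exactly $M$ changes of direction, so the degree of convexity of $P_D$ equals $M = N$.

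In the non-flat case $J \neq E$, the plan is to decompose $m$ into its portion from $S$ to $J$ and the superimposed portion from $J$ to $E$, and to count changes of direction in each piece. The first portion contributes exactly $M$ turns by Lemma~\ref{lem:degree_ih} applied to $I_M = J$. For the second portion, by the very construction of $(I_h)_{h>M}$ the cells $I_{M+1}, \ldots, I_N$ are precisely the points at which the superimposed bounce paths $r$ and $u$ (which here coincide with $m$) change direction, so this segment contributes exactly $N - M$ turns. Summing the two contributions, $m$ has $M + (N - M) = N$ changes of direction, which is the desired equality.

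The delicate point on which I expect the argument to hinge is making sure that no turn of $m$ is double-counted at the junction cell $J$. This will be handled by reusing the observation from the proof of Lemma~\ref{lem:degree_ih} that $m$ does not make a new turn while moving from $I_M$ to $I_{M+1}$: the two bounce paths merge smoothly at $J$ and continue along a common straight segment until the first shared turn at $I_{M+1}$. This guarantees that the $M$ turns counted along the pre-$J$ segment and the $N - M$ turns occurring at $I_{M+1}, \ldots, I_N$ together enumerate, without overlap, all changes of direction of $m$, completing the proof.
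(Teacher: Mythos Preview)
Your proof is correct and follows essentially the same approach as the paper: both arguments invoke Lemma~\ref{lem:minimal_convexity} (explicitly in your case, implicitly in the paper through the degree of $E$) and rely on Lemma~\ref{lem:degree_ih} to count the changes of direction of $m$, splitting into the flat and non-flat cases. The only minor difference is the anchor point in the non-flat case: the paper applies Lemma~\ref{lem:degree_ih} directly to the last cell $I_N$ (degree $N-1$) and adds one for the final turn at $I_N$, whereas you decompose at $J=I_M$ and count the $N-M$ subsequent turns from the definition of $(I_h)_{h>M}$; both bookkeeping choices lead to the same total $N$.
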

\begin{proof}
If $D$ is flat polyomino, we have $I_{M} = J(P_D)=E$, so $M$ is the degree of convexity of $P_D$.
Otherwise, at $I_l\neq E$ the two superimposed bounce paths have a last change of direction, before reaching $E$. So, according to previous Proposition, the degree of $I_l$ is $l-1$, then the degree of $E$ is $l$. 
\end{proof}

\begin{corollary}\label{cor:maximum_label}
Let $D$ be a directed convex polyomino such that the degree of convexity of $P_D$ 
is $k$. The sequence $(I_h)_h$ has length $k$. If $D$ is flat 
then $I_k=E$; otherwise $I_k \not= E$ and $I_k$ is the cell where the minimal bounce path $m$ of $P_D$ has the last change of direction.
\end{corollary}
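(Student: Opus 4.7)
The plan is to derive this statement as an almost direct consequence of Corollary~\ref{lem:maximum_label} and the construction of the sequence $(I_h)_h$, proceeding by a case analysis on whether $D$ is flat.

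First I would recall from Corollary~\ref{lem:maximum_label} that the maximum label appearing in the sequence $(I_h)_h$ equals the degree of convexity of $P_D$, hence equals $k$ by hypothesis. In particular this determines the length of the sequence, since the labels appear consecutively starting from $I_0=S$ (which has degree $0$, i.e.\ no changes of direction needed to reach it from itself) and the indexing grows by one at every element.

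Next, for the flat case I would invoke the definition: $D$ is flat precisely when $J(P_D)=E$, so using the notation of the paragraph defining $(I_h)_h$, we have $L_M=J=E$, and since in this range the index $h$ coincides with the degree of $I_h$ (Lemma~\ref{lem:degree_ih}), the degree of $E$ is $M$. By Corollary~\ref{lem:maximum_label} this common value equals $k$, so $I_k=I_M=E$ as required.

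For the non-flat case I would argue as follows. We still have $I_M=J\neq E$ with $\deg(J)=M$, but the sequence continues with cells $I_{M+1},I_{M+2},\dots$ defined as the successive cells at which the two (now superimposed) bounce paths change direction; by uniqueness of the superimposed portion, this is exactly the list of changes of direction of the minimal bounce path $m$ of $P_D$ lying strictly after $J$. By Lemma~\ref{lem:degree_ih}, each such $I_h$ has degree $h-1$, and since the maximum label in the sequence equals $k$ (Corollary~\ref{lem:maximum_label}), the last element is $I_k$, and it is distinct from $E$ because $m$ reaches $E$ without any further change of direction after its last turn. This yields that $I_k$ is precisely the final cell at which $m$ changes direction, completing the statement. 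The only delicate point, and the one I would spend most care on, is justifying that the labels in the sequence are exhausted exactly up to $k$ without gaps in the non-flat regime; this is secured by Lemma~\ref{lem:degree_ih} together with the definition that each new $I_h$ produces one additional change of direction of $m$.
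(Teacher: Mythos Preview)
Your proposal is correct and follows the route the paper intends: the paper actually gives no separate proof of this corollary, treating it as an immediate reformulation of Corollary~\ref{lem:maximum_label} together with Lemma~\ref{lem:degree_ih} and the very definition of $(I_h)_h$, which is precisely the chain of reasoning you have written out. One small imprecision: when you say the cells $I_{M+1},I_{M+2},\dots$ are ``exactly the list of changes of direction of the minimal bounce path $m$ lying strictly after $J$'', note that the paper's proof of Lemma~\ref{lem:degree_ih} records that $m$ does \emph{not} change direction at $I_{M+1}$ (it is the other bounce path that turns there to merge), while $m$ does change direction at each $I_h$ with $h>M+1$; this does not affect your conclusion about $I_k$, but you should phrase that intermediate sentence more carefully.
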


\begin{proposition}
Let $h,\eta$ be two integers and $\eta\geq 1$, we consider the cell $I_h=(i_h,j_h)$. If the cell $d=(i_h+1, j_h+\eta)$ (resp. $d=(i_h+\eta,j_h+1)$) exists, then the degree of $d$ is exactly $h+1$.
\end{proposition}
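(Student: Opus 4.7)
By the evident symmetry between the two bounce paths (swapping east/north and $F_e/F_s$), it suffices to treat the case $d=(i_h+1,j_h+\eta)$ with $\eta\ge 1$. Recall that the degree of $d$ is the number of direction changes of the minimal bounce path $m_{S,d}$, and by Lemma~\ref{lem:bounce} this equals the minimum number of direction changes over all monotone paths from $S$ to $d$. I will establish the equality by proving matching upper and lower bounds.

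For the upper bound ($\le h+1$), my plan is to exhibit an explicit monotone path from $S$ to $d$ with $h+1$ direction changes. By Lemma~\ref{lem:degree_ih}, the minimal bounce path $m_{S,I_h}$ already has $h$ direction changes in the regime $h\le M$. Since $I_h$ is a crossing of $r(P_D)$ and $u(P_D)$, one can choose $m_{S,I_h}$ arriving at $I_h$ either moving east or moving north. In the first case, extend by one east step to $(i_h+1,j_h)$ and then turn north up to $d$; in the second, extend by $\eta$ north steps to $(i_h,j_h+\eta)$ and then turn east to $d$. Either extension adds exactly one direction change. Row- and column-convexity of $D$, combined with the hypothesis $d\in D$, ensure that every intermediate cell of the extension belongs to $D$. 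The regime $h>M$ (past the joining cell $J$) is handled by the analogous extension, using the now-superimposed bounce path of $P_D$ and exploiting that $d$ sits on the opposite side of the bounce with respect to the turn at $I_h$, which costs one extra change so that the total is still $h+1$.

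For the lower bound ($\ge h+1$), by Lemma~\ref{lem:bounce} it suffices to show that both $r_{S,d}$ and $u_{S,d}$ have at least $h+1$ direction changes. The strategy is to compare these bounce paths with $r(P_D)$ and $u(P_D)$: the bounce path $r_{S,d}$, starting east from $S$ and forced to remain inside $D$, must perform the same initial sequence of bounces as $r(P_D)$ — in particular, it must accumulate at least $h$ direction changes by the time it has reached the column and row structure surrounding $I_h$ — and then incur at least one further direction change because $d$ lies strictly northeast of $I_h$ (specifically at offset $(1,\eta)$) rather than on the continuation of $r(P_D)$ toward $E$. The same reasoning, started with a north step, shows that $u_{S,d}$ also has at least $h+1$ direction changes.

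The main obstacle will be making precise the match between the bounce paths aimed at $d$ inside $D$ and the bounce paths of $P_D$ aimed at $E$, so as to justify that exactly $h$ direction changes are forced before the final one. The secondary difficulty is the $h>M$ regime, where the two bounce paths of $P_D$ are superimposed beyond $J$: here one must carefully analyze the local geometry of the boundary near the turn at $I_h$ in order to still read off $h+1$ changes, and in particular confirm that if $d$ exists in $D$ then the corresponding bounce paths cannot short-circuit past the $I_h$ structure.
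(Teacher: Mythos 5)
Your plan follows essentially the same route as the paper's proof: both reduce the degree of $d$ to the degree of $I_h$ given by Lemma~\ref{lem:degree_ih} plus the additional change(s) of direction needed to leave $I_h$ and reach $d$, with the same dichotomy according to whether the bounce path turns at $I_h$ (case $h>M$, giving $(h-1)+2$) or merely crosses there (case $h\le M$, giving $h+1$). The step you single out as the main obstacle is exactly the paper's opening (unproved) observation that $m_{S,d}$ passes through $I_h$, and it follows from monotonicity: before reaching $I_h$ the bounce paths towards $E$ stay in columns $\le i_h$ and rows $\le j_h$, hence never overshoot the column $i_h+1$ or the row $j_h+\eta$ of $d$, so the bounce paths towards $d$ cannot deviate from them any earlier.
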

\begin{proof}
First we observe that the minimal bounce path $m_{S,d}$ always passes trough $I_h$.
Now, we have to consider two cases.
In the first case we suppose that $h\leq M$, then the degree of $I_h$ is exactly $h$, the two bounce paths do not change direction at $I_h$ and, since that to reach $d$ from $I_h$ we have to do one change of direction, it follows that the degree of $d$ is $h+1$.
Otherwise, we have $h>M$, then the degree of $I_h$ is $h-1$, the two superimposed bounce paths change direction at $I_h$ and, since that to reach $d$ from $I_h$ we have to do one change of direction, we conclude that the degree of $d$ is $h-1+2=h+1$.
\end{proof}

\begin{proposition}\label{prop:labels}
Let $D$ be a directed convex polyomino. Let us consider the cells $I_{h-1}=(i_{h-1},j_{h-1})$ and $I_h=(i_h,j_h)$.
Then we have:
\begin{enumerate}[1)]
	\item \label{label:case_1} each cell $(i,j)$ of $P_D$ (resp. $D$) with $i_{h-1} < i\leq i_h$ and $j_{h-1} < j$ has degree $h$;
	\item \label{label:case_2} each cell $(i,j)$ of $P_D$ (resp. $D$) with $j_{h-1} < j \leq j_h$ and $i_h < i$ has degree $h$;
	\item\label{label:case_3} each cell $(i,j)$ of $P_D$ (resp. $D$) with $i_h < i$ and $j_h < j$ has degree greater than $h$.
\end{enumerate}
\end{proposition}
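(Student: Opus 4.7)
The plan is to prove each of the three parts by combining the preceding Proposition with the row- and column-convexity of the polyomino. For each region I will establish both the upper bound on the degree (by exhibiting an explicit path with the claimed number of direction changes) and the lower bound (by arguing that any minimal bounce path from $S$ to a cell in that region passes through the relevant $I_h$, so that the case analysis used in the proof of the preceding Proposition carries over).

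For Part 1, given a cell $c = (i, j)$ with $i_{h-1} < i \leq i_h$ and $j > j_{h-1}$, the preceding Proposition applied at $I_{h-1}$ (in its two symmetric forms, with $\eta = j - j_{h-1}$ and $\eta = i - i_{h-1}$ respectively) gives that both cells $c_0 = (i_{h-1}+1, j)$ and $c_0' = (i, j_{h-1}+1)$ have degree exactly $h$. Row-convexity (resp.\ column-convexity) of the polyomino ensures that the horizontal segment from $c_0$ to $c$ (resp.\ the vertical segment from $c_0'$ to $c$) lies entirely in the polyomino. Concatenating a suitable witness path with one of these two extensions yields a path from $S$ to $c$ with $h$ direction changes; at least one of the two extensions can always be done without introducing an additional turn, depending on whether the bounce paths realizing the degree of $c_0$ and $c_0'$ end with an east or a north step. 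This establishes the upper bound.

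For the lower bound, I argue as in the proof of the preceding Proposition that every minimal bounce path from $S$ to $c$ must pass through $I_{h-1}$. This follows from the geometry of bounce paths: the cells $I_0, I_1, \ldots$ form the corners of the staircase backbone of the polyomino, and a cell $c$ strictly north-east of $I_{h-1}$ with $i \leq i_h$ cannot be reached by a bounce path from $S$ that bypasses $I_{h-1}$. Once this is granted, the conclusion splits into the same two cases as in the preceding Proposition: if $h - 1 \leq M$, the cell $I_{h-1}$ has degree $h - 1$ and one further turn is required to reach $c$, giving degree at least $h$; if $h - 1 > M$, the cell $I_{h-1}$ has degree $h - 2$ but the superimposed bounce paths change direction at $I_{h-1}$, so the degree is at least $(h-2) + 2 = h$.

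Part 2 follows from Part 1 by the obvious symmetry that exchanges rows with columns and east with north. For Part 3, the same template is applied at $I_h$ rather than at $I_{h-1}$: the preceding Proposition gives cells of degree $h + 1$ immediately north-east of $I_h$, row- and column-convexity extend this to the full region, and the lower-bound step shows that any minimal bounce path from $S$ to a cell strictly north-east of $I_h$ must pass through $I_h$, forcing the degree to be at least $h + 1 > h$. The main obstacle I anticipate is the geometric claim that the minimal bounce path from $S$ is always forced through $I_{h-1}$ (respectively $I_h$); this requires a careful case analysis separating the crossing case $h \leq M$ from the superimposed case $h > M$, with the latter further subdivided according to whether the superimposed path leaves $I_{h-1}$ going east or going north.
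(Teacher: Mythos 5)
Your proposal is correct in substance and rests on the same engine as the paper's proof --- the preceding proposition supplying ``seed'' cells of known degree adjacent to $I_{h-1}$ and $I_h$, together with the dichotomy $h\leq M$ versus $h>M$ --- but it is organized differently. The paper opens with a one-line monotonicity remark (if $c$ lies weakly south-west of $d$ then the degree of $c$ is at most the degree of $d$), and this does almost all the work: case 3 follows immediately from the seed cells $(i_h+1,j_h+\eta)$ and $(i_h+\eta,j_h+1)$ of degree $h+1$, and cases 1 and 2 follow by sandwiching each cell of the region between two cells of degree $h$ lying in the distinguished rows and columns through $I_{h-1}$ and $I_h$. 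You instead prove two-sided bounds by hand: an upper bound via an explicit path extended by row/column convexity, and a lower bound via the claim that the minimal bounce path to any cell strictly north-east of $I_{h-1}$ is forced through $I_{h-1}$. That claim is true --- before reaching $I_{h-1}$ both bounce paths $r$ and $u$ stay weakly south-west of $I_{h-1}$, so the bounce paths to $c$ coincide with them up to that cell --- but it is exactly the part you leave as an ``anticipated obstacle,'' and it is also the part the monotonicity remark renders unnecessary. Two smaller points you should repair if you keep your route: the cell $c_0=(i_{h-1}+1,j)$ need not exist (a short column $i_{h-1}+1$ can stop below row $j$), whereas $c_0'=(i,j_{h-1}+1)$ always exists because the bounce path leaving $I_{h-1}$ eastward shows that row $j_{h-1}$ reaches column $i_h\geq i$; and the assertion that ``at least one of the two extensions adds no turn'' should be replaced by the observation that the witness path to $c_0'$ from the preceding proposition arrives with a north step, so the vertical extension to $c$ is turn-free. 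Also note that in the case $h>M$ the paper observes that $I_{h-1}$ and $I_h$ are aligned, so one of the two regions in cases 1--2 is empty; your symmetry argument for Part 2 should acknowledge this degeneracy.
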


The content of Proposition~\ref{prop:labels} is illustrated by Figure~\ref{fig:label}.

\begin{proof}
We remark that, if we consider two cells $c=(i,j)$ and $d=(k,l)$ such that $i<k$ and $j<l$, 
then the degree of $c$ is less than or equal to the degree of $d$.
Since that the degree of the cell $(i_h+1, j_h+\eta)$ (resp. $(i_h+\eta,j_h+1)$), where $\eta\geq 1$, is equal to $h+1$, then we have that case~\ref{label:case_3} holds for every $h$.

Now, we suppose that $h>M$, then at $I_h$ the bounce path changes direction and $I_h$ and $I_{h-1}$ are aligned.
Without loss of generality we can suppose that at $I_h$ the path $m$ has a change of direction of type east-north. So, the cells $I_h$ and $I_{h-1}$ are aligned horizontally, then case~\ref{label:case_2} holds because there are not cells in this area.
The cells on top of $I_h$ in the same column have the degree equal to $h$ as each cell $(i_{h-1}+1, j_{h-1}+\eta )$ with $\eta\geq 1$. So case~\ref{label:case_1} is verified. 

Now we suppose that $h\leq M$, then the cells $I_h$ and $I_{h-1}$ are not aligned.
The cells on top of $I_h$ (resp. $I_{h-1}$) in the same column and the cells 
on the right of $I_h$ (resp. $I_{h-1})$ in the same row have the degree equal to 
$h$ (resp. $h-1$).
Furthermore, each cell $(i_{h}+1, j_{h}+\eta)$ (resp. $(i_h+\eta,j_h+1)$) with $\eta\geq 1$ has the degree equal to $h+1$, while the cell $(i_{h-1}+1, j_{h-1}+1)$ has the degree equal to $h$.
We deduce that case \ref{label:case_1} and case \ref{label:case_2} hold.
\end{proof}


\begin{figure}[htb]
$$
\begin{array}{c}
\begin{tikzpicture}[scale=.4]                                                   
\AddSquare{0}{0}
\AddSquare{0}{1}
\AddSquare{0}{2}

\AddSquare{1}{0}
\AddSquare{1}{1}
\AddSquare{1}{2}

\AddSquare{2}{0}
\AddSquare{2}{1}
\AddSquare{2}{2}
\AddSquare{2}{3}
\AddSquare{2}{4}

\AddSquare{3}{1}
\AddSquare{3}{2}
\AddSquare{3}{3}
\AddSquare{3}{4}

\AddSquare{4}{2}
\AddSquare{4}{3}
\AddSquare{4}{4}

\AddSquare{5}{2}
\AddSquare{5}{3}
\AddSquare{5}{4}

\AddSquare{6}{3}
\AddSquare{6}{4}
\AddSquare{6}{5}

\AddSquare{7}{4}
\AddSquare{7}{5}
\AddSquare{7}{6}{4}

\AddSquare{8}{5}
\AddSquare{8}{6}

\AddSquare{9}{6}
\AddSquare{9}{7}

\AddSquare{10}{6}{5}
\AddSquare{10}{7}
\AddSquare{10}{8}

\node at \Point{12}{-0.5} {$h\!\!=\!\!0$};

\draw[color=red, line width=1.5*\LineWidth] (.5,.5) -- (.5, 10.5)  ;
\draw[color=red, line width=1.5*\LineWidth] (.5,.5) -- (12.5, .5)  ;

\node at \Point{12}{1.5} {$h\!\!=\!\!1$};

\draw[color=blue, line width=1.5*\LineWidth] (2.5,2.5) -- (2.5, 10.5)  ;
\draw[color=blue, line width=1.5*\LineWidth] (2.5,2.5) -- (12.5, 2.5)  ;

\node at \Point{12}{3.5} {$h\!\!=\!\!2$};

\draw[color=red, line width=1.5*\LineWidth] (5.5,4.5) -- (5.5, 10.5)  ;
\draw[color=red, line width=2.5*\LineWidth] (5.5,4.5) -- (12.5, 4.5)  ;

\draw[color=blue, line width=2.5*\LineWidth] (7.5,4.5) -- (7.5, 10.5)  ;
\draw[color=blue, line width=1.5*\LineWidth] (7.5,4.5) -- (12.5, 4.5)  ;

\node at \Point{6.5}{10} {$h\!\!=\!\!3$};
\node at \Point{12}{5.5} {$h\!\!=\!\!4$};

\draw[color=red, line width=1.5*\LineWidth] (7.5,6.5) -- (7.5, 10.5)  ;
\draw[color=red, line width=2.5*\LineWidth] (7.5,6.5) -- (12.5, 6.5)  ;

\node at \Point{9}{10} {$h\!\!=\!\!5$};

\draw[color=blue, line width=1.5*\LineWidth] (10.5,6.5) -- (10.5, 10.5)  ;
\draw[color=blue, line width=1.5*\LineWidth] (10.5,6.5) -- (12.5, 6.5)  ;

\node at \Point{12}{10} {$h\!\!>\!\!5$};

\Ih{0}{0}{0}{red}
\Ih{2}{2}{1}{blue}
\Ih{5}{4}{2}{red}
\Ih{7}{4}{3}{blue}
\Ih{7}{6}{4}{red}
\Ih{10}{6}{5}{blue}

\end{tikzpicture}
\hspace{1cm}
\end{array}
\begin{array}{c}
\begin{tikzpicture}[scale=.4]                                                   
\AddSquare{0}{0}
\AddSquare{0}{1}
\AddSquare{0}{2}

\AddSquare{1}{0}
\AddSquare{1}{1}
\AddSquare{1}{2}

\AddSquare{2}{0}
\AddSquare{2}{1}
\AddSquare{2}{2}
\AddSquare{2}{3}
\AddSquare{2}{4}

\AddSquare{3}{1}
\AddSquare{3}{2}
\AddSquare{3}{3}
\AddSquare{3}{4}

\AddSquare{4}{2}
\AddSquare{4}{3}
\AddSquare{4}{4}

\AddSquare{5}{2}
\AddSquare{5}{3}
\AddSquare{5}{4}

\node at \Point{7}{-0.5} {$h\!\!=\!\!0$};

\draw[color=red, line width=1.5*\LineWidth] (.5,.5) -- (.5, 6.5)  ;
\draw[color=red, line width=1.5*\LineWidth] (.5,.5) -- (8.5, .5)  ;

\node at \Point{7}{1.5} {$h\!\!=\!\!1$};

\draw[color=blue, line width=1.5*\LineWidth] (2.5,2.5) -- (2.5, 6.5)  ;
\draw[color=blue, line width=1.5*\LineWidth] (2.5,2.5) -- (8.5, 2.5)  ;

\node at \Point{7}{3.5} {$h\!\!=\!\!2$};

\draw[color=red, line width=1.5*\LineWidth] (5.5,4.5) -- (5.5, 6.5)  ;
\draw[color=red, line width=1.5*\LineWidth] (5.5,4.5) -- (8.5, 4.5)  ;

\Ih{0}{0}{0}{red}
\Ih{2}{2}{1}{blue}
\Ih{5}{4}{2}{red}

\node at \Point{7}{5.5} {$h\!\!>\!\!2$};

\end{tikzpicture}
\end{array}
$$
\caption{The degrees of the cells inside a flat and a non flat polyomino parallelogram polyomino. \label{fig:label}}
\end{figure}
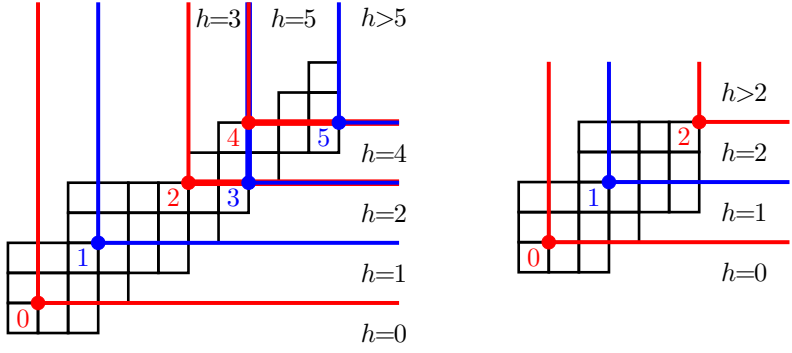

Now, given a parallelogram polyomino $P$ with degree of convexity equal to $k$, we define $R_P$ as the set of cells 
$c=(i,j)$ of $P$ such that $i_{k-1} < i$ and $j_{k-1} < j$.

\begin{corollary}\label{cor:k}
Let $D$ be a directed convex polyomino such that $P_D$ has degree of convexity 
$k$. 
Then the cells of $R_{P_D}$ (resp. $R_{P_D} \cap D$) are the only cells with 
degree $k$ in $P_D$ (resp. $D$).
\end{corollary}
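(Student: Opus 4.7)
The plan is to combine Proposition~\ref{prop:labels} at two consecutive levels with the fact that $k$ is the maximal degree attained in $P_D$ (Corollary~\ref{lem:maximum_label}). First I would invoke Corollary~\ref{cor:maximum_label} to observe that the sequence $(I_h)_h$ runs up to index $k$, so in particular both $I_{k-1}=(i_{k-1},j_{k-1})$ and $I_k=(i_k,j_k)$ are defined. Then, applying case~\ref{label:case_3} of Proposition~\ref{prop:labels} with $h=k-1$, every cell of $R_{P_D}$ has degree strictly greater than $k-1$, hence at least $k$; since no cell can exceed the degree of convexity $k$, these cells have degree exactly $k$. The same reasoning, using the parenthetical version of Proposition~\ref{prop:labels} stated for $D$, shows that every cell of $R_{P_D}\cap D$ has degree $k$ in $D$.

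For the converse, I would apply case~\ref{label:case_3} of Proposition~\ref{prop:labels} one level higher, with $h=k$: any cell with $i>i_k$ and $j>j_k$ would have degree strictly greater than $k$, contradicting the maximality of $k$, so no such cell exists in $P_D$ (or in $D$). Hence any cell $(i,j)$ outside $R_{P_D}$ must satisfy $i\leq i_{k-1}$ or $j\leq j_{k-1}$, and iterating cases~\ref{label:case_1} and~\ref{label:case_2} of Proposition~\ref{prop:labels} for $h=1,\ldots,k-1$ places such a cell in a region of degree at most $k-1<k$. This closes the converse direction uniformly for both $P_D$ and $D$.

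I do not expect any serious obstacle: the argument is essentially a bookkeeping exercise on top of Proposition~\ref{prop:labels}. The only subtle point is the reconciliation of the quadrant description of $R_{P_D}$ (cells with $i>i_{k-1}$ and $j>j_{k-1}$) with the stripe-wise partition produced by Proposition~\ref{prop:labels} at $h=k$, and this is exactly what the vanishing of the region $\{(i,j):i>i_k \text{ and } j>j_k\}$ ensures.
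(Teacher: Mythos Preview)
Your proposal is correct and follows essentially the same approach as the paper: use case~\ref{label:case_3} of Proposition~\ref{prop:labels} at level $h=k-1$ to get degree $\ge k$ on $R_{P_D}$, then cap by the degree of convexity. The paper's own proof is considerably terser and leaves the ``only'' direction implicit, whereas you spell it out via the vanishing of the region beyond $I_k$ and the iterated use of cases~\ref{label:case_1}--\ref{label:case_2}; this is a harmless elaboration of the same idea.
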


\begin{proof}
According to Proposition~\ref{prop:labels}, the cells of $R_{P_D}$ (resp. $R_{P_D} \cap D$) have degree greater than
or equal to $k$.
Furthermore, the degree of convexity of $P_D$ is $k$, so we deduce that the cells of $R_{P_D}$ (resp. $R_{P_D} \cap D$) are the only cells with degree $k$ in $P_D$ (resp. $D$).
\end{proof}

\begin{lemma}
\label{lem:rectangle}
If $D$ is a flat directed convex polyomino, then $R_{P_D}$ is a non empty rectangle.
\end{lemma}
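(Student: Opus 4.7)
The plan is to identify $R_{P_D}$ explicitly as a rectangle determined by the last crossing $I_{k-1}$ and the endpoint $E$. Let $k$ be the degree of convexity of $P_D$. Since $D$ is flat, Corollary~\ref{cor:maximum_label} gives $I_k = E$; write $I_{k-1} = (a, b)$ and $E = (c, d)$. The crossings being distinct cells forces $a < c$ and $b < d$. The goal is then to prove $R_{P_D} = [a+1, c] \times [b+1, d]$.

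I would first examine the last portions of the two bounce paths. In the flat case Lemma~\ref{lem:difference_u_r} gives $l(d_r) = l(d_u) = k$, so both $r$ and $u$ make exactly $k$ changes of direction in total; the staircase nature of bounce paths (maximal sides between corners), combined with the fact that no further crossing occurs strictly between $I_{k-1}$ and $E$, forces each path to make exactly one change of direction on this last segment. Without loss of generality, $r$ leaves $I_{k-1}$ going east, reaches $(c, b)$, turns north, and reaches $E$, while symmetrically $u$ goes north to $(a, d)$ and then east to $E$. Thus all cells on the four sides of the rectangle $[a, c] \times [b, d]$ lie in $P_D$. I would then invoke column-convexity: for each column $i$ with $a \le i \le c$ the cells $(i, b)$ and $(i, d)$ belong to $P_D$, so the entire vertical segment between them does as well, yielding $[a, c] \times [b, d] \subseteq P_D$. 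Finally, since $E = (c, d)$ is by construction the north-east corner of the bounding rectangle of $P_D$, no cell of $P_D$ has column $> c$ or row $> d$; hence $R_{P_D} = P_D \cap \bigl((a, \infty) \times (b, \infty)\bigr)$ equals $[a+1, c] \times [b+1, d]$, which is non-empty because $c > a$ and $d > b$.

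The step I expect to be the main obstacle is the assertion that each bounce path makes exactly one change of direction between $I_{k-1}$ and $E$. This is geometrically clear from the interleaving staircase picture, but a clean rigorous argument requires some case analysis on the directions in which $r$ and $u$ arrive at $I_{k-1}$: if either path performed two or more changes of direction in this last segment, the two paths would share additional interior cells and thereby force an intermediate crossing, contradicting that $I_k = E$ is the next crossing after $I_{k-1}$. Once that structural fact is in place, the remaining argument reduces, as indicated, to the defining properties of parallelogram polyominoes and column-convexity.
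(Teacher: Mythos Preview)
Your proposal is correct and follows essentially the same approach as the paper's own proof. Both arguments hinge on the same key geometric observation: since $D$ is flat we have $I_k=E$, and the last change of direction of one bounce path occurs in the cell sharing the column of $I_{k-1}$ and the row of $E$, while the other's occurs in the cell sharing the row of $I_{k-1}$ and the column of $E$; this forces $R_{P_D}$ to be exactly the rectangle with opposite corners $(i_{k-1}+1,j_{k-1}+1)$ and $E$. The paper simply asserts this structural fact in one sentence, whereas you spell out more carefully why each path has exactly one turn on that last segment (via the absence of an intermediate crossing) and invoke column-convexity to fill the interior---so your version is a more detailed write-up of the same idea rather than a different route.
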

\begin{proof}
Les $D$ be a flat directed convex polyomino, $P_D$ the parallelogram polyomino associated with $D$ and $k$ the convexity degree of $P_D$. By construction, the cells $I_k=E$ and $(i_{k-1}+1,j_{k-1}+1)$ belong to $R(P_D)$.
Now we observe that the two bounce paths of $P_D$ have their last change of direction at two cells: one placed in the same column of $I_{k-1}$ and in the same row of $I_k=E$, the other one placed in the same row of $I_{k-1}$ and in the same column of $I_k=E$.
So, $R_{P_D}$ is a non empty rectangle.
\end{proof}

Let $P$ be a flat \pp\ with degree of  convexity $k$. Let $a$ and $b$ be the width and the height of $R_P$, respectively.
We denote by $\lambda_{R_P}$ the cut $ee^{\alpha(P)-a}s^be^as^{\beta(P)-b}s$ of $P$, which is precisely the cut of the directed convex polyomino obtained removing $R_P$ from $P$.

The following statement gives a characterization of the class $\mathbb{D}_{\le k}$, which will lead us to the desired generating function.

\begin{proposition}
\label{prop:charactkdir}
Every directed $k$-convex polyomino $D$ is uniquely determined by one of the 
two (mutually exclusive) situations: 
\begin{enumerate}[1)]
	\item \label{des:case_1} a $k$-parallelogram polyomino $P$ and a cut $\lambda$, which is a cut 
	of $P$, or,
	\item \label{des:case_2} a flat $k\!+\!1$-parallelogram polyomino $P$, with degree of convexity 
	$k+1$, and a cut $\lambda$, which is a cut of $P$ such that 
$\lambda \subseteq \lambda_{R_P}$, where the notation $\lambda \subseteq \lambda_{R_P}$ is used to mean that the path $\lambda_{R_P}$ is weakly above~$\lambda$.
\end{enumerate}
\end{proposition}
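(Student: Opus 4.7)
The strategy is to begin with a directed $k$-convex polyomino $D$, form the pair $(P,\lambda)=(P_D,\lambda_D)$ uniquely determined by Proposition~\ref{prop:directed}, and show this pair falls in exactly one of the two displayed situations. The decisive reduction is Proposition~\ref{prop:labels}: for every cell $b$ of $D$, its degree inside $D$ equals its degree inside $P_D$. Consequently, $D$ is $k$-convex if and only if no cell of $P$ of degree strictly greater than $k$ is inherited by $D$. The proof then proceeds by case analysis on the degree of convexity $k^{\ast}$ of $P$.

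If $k^{\ast}\le k$, every cell of $P$ already has degree $\le k$ and any cut $\lambda$ produces a $k$-convex $D$; this is situation~(1). If $k^{\ast}=k+1$, Corollary~\ref{cor:k} identifies the cells of $P$ of degree $k+1$ with $R_{P}$, so the $k$-convexity of $D$ reads as $R_{P}\cap D=\emptyset$. By Lemma~\ref{lem:rectangle}, $R_{P}$ is a rectangle exactly when $P$ is flat, and in that case the exclusion condition is expressible cleanly as $\lambda\subseteq\lambda_{R_{P}}$, which is situation~(2). The non-flat sub-case must be excluded: any cut starts with an east step and ends with a south step, so the leftmost cell of the top row of $P$ and the lowest cell of the rightmost column of $P$ are always inherited by $D$; a direct geometric check using the positions of $I_{k}$ and $I_{k+1}$ (in particular the strictness $c_{L}>i_{k}$ and $l_{2}>j_{k}$ in the non-flat situation) shows that at least one of these two cells lies in $R_{P}$, producing an unavoidable cell of degree $k+1$ in $D$ and hence a contradiction.

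The main obstacle is ruling out $k^{\ast}\ge k+2$. The plan is to produce a cell of $P$ of degree exactly $k+1$ that every admissible cut inherits to $D$. When the crossing index $M$ satisfies $M\ge k+1$, Lemma~\ref{lem:degree_ih} gives the crossing cell $I_{k+1}$ degree exactly $k+1$; as a genuine crossing of two distinct bounce paths, $I_{k+1}$ is interior to $P$, lying neither on the top row nor on the rightmost column, so it survives in $D$ for every cut. When instead $M\le k$, the same leftmost-top-row / lowest-rightmost-column argument as in the non-flat case applies, strengthened by the fact that $k^{\ast}\ge k+2$ forces these two cells to live strictly inside $\{(i,j) : i>i_{k},\ j>j_{k}\}$, a region which by Proposition~\ref{prop:labels} (case~3) consists entirely of cells of degree $\ge k+1$. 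Either way $D$ contains a cell of degree $\ge k+1$, contradicting $k$-convexity, so $k^{\ast}\ge k+2$ cannot occur. Mutual exclusivity of situations~(1) and~(2) is immediate from the different prescribed values of $k^{\ast}$, and the converse direction --- each pair $(P,\lambda)$ of situation~(1) or~(2) yields a directed $k$-convex polyomino --- is obtained by running the same degree analysis in reverse.
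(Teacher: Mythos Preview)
Your approach is essentially the paper's: both perform a case analysis on the degree of convexity $k^{\ast}$ of $P_D$, invoke Corollary~\ref{cor:k} to identify $R_{P_D}$ as the locus of cells of maximal degree, and argue that when $k^{\ast}>k$ some cell of high degree must survive in $D$. The paper's $(\Rightarrow)$ direction runs through the same three contradictory situations (degree $k+1$ flat with a bad cut; degree $k+1$ non-flat; degree $\ge k+2$), and its $(\Leftarrow)$ direction likewise reduces to Corollary~\ref{cor:k}.

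One step in your argument is not justified. In the sub-case $k^{\ast}\ge k+2$ with $M\ge k+1$ you assert that the crossing cell $I_{k+1}$ ``survives in $D$ for every cut'' simply because it lies neither on the top row nor on the rightmost column of $P_D$. That inference is invalid: a cut can excise any cell of $P_D$ whose column index is at least that of the leftmost top-row cell \emph{and} whose row index is at least that of the lowest rightmost-column cell, and nothing you say rules out $I_{k+1}$ lying in that rectangle. The paper avoids trying to locate an interior witness altogether. For the non-flat case it argues directly that the lowest cell of the rightmost column of $P_D$---which is always in $D$ because $\lambda$ ends with a south step---has the same degree as $E$, namely $k^{\ast}$; for $k^{\ast}\ge k+2$ it observes (briefly) that $D$ then has degree of convexity $k^{\ast}$ or $k^{\ast}-1$, both $>k$. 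Your own leftmost-top-row / lowest-rightmost-column idea from the $M\le k$ sub-case is in fact the right one and, once the inequalities $a>i_k$, $l_2>j_k$ (and symmetrically) are established from $k^{\ast}\ge k+2$, covers all sub-cases uniformly; the split on $M$ and the interior-cell argument should be dropped.
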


\begin{proof}
We recall that a directed convex polyomino $D$ is uniquely determined by 
$P_D$ and the cut $\lambda_D$ of $P_D$. So, we just need to prove that $D$ is a 
directed $k$-convex polyomino if and only if $P_D$ and $\lambda_D$ have the 
properties listed in case~\ref{des:case_1} or in case~\ref{des:case_2}.\\
$(\Rightarrow)$ We proceed by contradiction, so we have to consider three cases:
\begin{itemize}
	\item $P_D$ is a flat $k\!+\!1$-parallelogram polyomino, with degree of 
	convexity equal to $k+1$ and $\lambda_D\nsubseteq \lambda_{R_{P_D}}$. 
	Since $\lambda_D\nsubseteq \lambda_{R_{P_D}}$, then 
	$R_{P_D}\cap D\neq\varnothing$ and, according to Corollary~\ref{cor:k}, 
	$D$ is not a directed $k$-convex polyomino, against our hypothesis. 
	In Fig.~\ref{fig:case1} we can see a directed convex polyomino $D$ (which 
	is not a $2$-directed convex polyomino), where $P_D$ is a flat $3$-parallelogram 
	polyomino with degree of convexity equal to $3$, $R_{P_D}\neq\varnothing$, 
	and $\lambda_D=e^3sese^3s \nsubseteq \lambda_{R_{P_D}}=e^2s^2e^5s$. 
	\item $P_D$ is a $k+1$-parallelogram polyomino which is not flat, with 
	degree of convexity equal to $k+1$. 
	From Corollary~\ref{cor:maximum_label}, $I_{k+1}\neq E$ and $I_{k+1}$ is the cell where the minimal bounce path $m$ has the 
	last change of direction. 
	Without loss of generality, we can suppose that the change of direction at $I_{k+1}$ is of type north-east.
	The cells of the rightmost column of $D$ have the same degree as the cell $E$, 
	namely $k+1$.
	Furthermore, we recall that the last step of the cut $\lambda_D$ is a south step, so the lowest cell of the 
	rightmost column of $P_D$ belongs to $D$, it follows that $D$ is not a directed $k$-convex 
	polyomino, against our hypothesis.
	\item  $P_D$ is a $h$-parallelogram polyomino, with degree of convexity 
	equal to $h\geq k+2$. If $\lambda_D$ is a cut of $P_D$ such that 
	$\lambda_D\subseteq \lambda_{R_{P_D}}$, then $D$ will have degree of 
	convexity equal to $h-1$; in every other case $D$ will have degree of 
	convexity equal to $h$.
	In each case $D$ is not a directed $k$-convex polyomino, contradicting our 
	hypothesis.
\end{itemize}
$(\Leftarrow)$ If $P_D$ and $\lambda_D$ satisfy the properties of 
case~\ref{des:case_1} then trivially $D$ is a directed $k$-convex polyomino, 
while the statement in case~\ref{des:case_2} follows from 
Corollary~\ref{cor:k}.
\end{proof}

Our aim is to use the three steps of our method to obtain the generating function of directed $k$-convex polyominoes. However, since  the characterization of the cut turns out to be quite complex, it will be easier to obtain the desired generating function by difference.

Let $\mathbb{D}_{\le k}^-$ be the class of directed convex polyominoes which are bijective to triplets 
$(F_e, F_s, \lambda)$ where $\lambda$ is a generic cut, and $F_e$ and $F_s$ 
are forests with heights less than or equal to $k$, according to the mapping described in Proposition~\ref{prop:directed}. 
Proposition~\ref{prop:charactkdir} and the bound on the height of the forests ensure that $\mathbb{D}_{\le k}^-$ is included in $\mathbb{D}_{\le k}$.
Now $\mathbb{FD}_{=k}$ is the set of the flat directed convex 
polyominoes with degree of convexity $k$.
The following result can be proved using 
Proposition~\ref{prop:charactkdir} and the bijection $\Phi$.

\begin{proposition}
\label{prop:include_dk}
For any integer $k\geq 1$, we have $\mathbb{D}_{\le k+1}^- = \mathbb{D}_{\le k} \sqcup \mathbb{FD}_{= k+1}$ where $\sqcup$ is the disjoint union.
\end{proposition}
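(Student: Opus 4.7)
The plan is to push everything through the bijection $D \leftrightarrow (F_e, F_s, \lambda)$ of Proposition~\ref{prop:directed}, so that the claimed identity becomes a combinatorial equality on triplets in which all constraints are expressed in terms of the heights of $F_e$, $F_s$ and the relative position of $\lambda$ and $\lambda_{R_{P_D}}$.

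First, I will rewrite Proposition~\ref{prop:charactkdir}, applied at level $k$, in the triplet language, using Proposition~\ref{prop:minimal_convexity_PP} together with Lemma~\ref{prop:equivalence_height}. The case ``$P_D$ is a $k$-parallelogram, $\lambda$ arbitrary'' becomes (a) ``$h(F_e), h(F_s) \leq k+1$ and not both equal to $k+1$, with $\lambda$ arbitrary''. The case ``$P_D$ is a flat $(k+1)$-parallelogram of degree $k+1$ with $\lambda \subseteq \lambda_{R_{P_D}}$'' becomes (b) ``$h(F_e) = h(F_s) = k+1$ and $\lambda \subseteq \lambda_{R_{P_D}}$''. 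So on the triplet side $\mathbb{D}_{\leq k}$ is the disjoint union of (a) and (b).

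Second, I will characterize $\mathbb{FD}_{=k+1}$ on the triplet side as the family (c) ``$h(F_e) = h(F_s) = k+1$ and $\lambda \not\subseteq \lambda_{R_{P_D}}$''. Equality of heights is equivalent to $J(P_D) = E$, i.e. to the flatness of $D$, by Lemma~\ref{prop:equivalence_height} (case i). When $h(F_e) = h(F_s) = k+1$ and $D$ is flat, Proposition~\ref{prop:minimal_convexity_PP} forces $P_D$ to have degree of convexity exactly $k+1$; then Corollary~\ref{cor:k} identifies the cells of degree $k+1$ in $D$ with $R_{P_D} \cap D$. Hence $D$ has degree exactly $k+1$ iff $R_{P_D} \cap D \neq \emptyset$, iff $\lambda \not\subseteq \lambda_{R_{P_D}}$. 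Conversely, any flat $D$ of degree $k+1$ has forest heights equal to $k+1$ by the same stratification of cells.

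Finally, the union of (a), (b), (c) on the triplet side exhausts exactly the triplets with $h(F_e), h(F_s) \leq k+1$ and $\lambda$ arbitrary, which is the defining condition of $\mathbb{D}_{\leq k+1}^-$. Disjointness is immediate, since the three sub-families are separated either by a strict height inequality or by the mutually exclusive cut-conditions $\lambda \subseteq \lambda_{R_{P_D}}$ versus $\lambda \not\subseteq \lambda_{R_{P_D}}$. The main delicate step will be the converse implication in Step 2, namely arguing that a flat directed convex polyomino of degree exactly $k+1$ cannot have forest heights strictly larger than $k+1$; this is where Propositions~\ref{prop:charactkdir}, \ref{prop:labels} and Corollary~\ref{cor:k} are combined.
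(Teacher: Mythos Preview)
Your strategy of passing to triplets and splitting according to forest heights is exactly the fleshing-out that the paper's one-line justification (``use Proposition~\ref{prop:charactkdir} and $\Phi$'') calls for. Your decomposition $\mathbb{D}_{\le k+1}^- = (a)\sqcup(b)\sqcup(c)$ is correct, as is the identification $(a)\sqcup(b)=\mathbb{D}_{\le k}$ via Proposition~\ref{prop:charactkdir} and Proposition~\ref{prop:minimal_convexity_PP}, and the inclusion $(c)\subseteq\mathbb{FD}_{=k+1}$.

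The genuine gap is precisely where you place it: the converse $\mathbb{FD}_{=k+1}\subseteq(c)$. As the paper defines things, this inclusion is actually \emph{false}, so no combination of Propositions~\ref{prop:charactkdir}, \ref{prop:labels} and Corollary~\ref{cor:k} can produce it. Concretely, take any flat parallelogram polyomino $P$ with degree of convexity $k+2$ (equivalently $h(F_e)=h(F_s)=k+2$) and set $\lambda=\lambda_{R_P}$. The resulting $D$ is flat, since flatness depends only on $P_D$; it contains no cell of degree $k+2$, since $R_P\cap D=\emptyset$ (Corollary~\ref{cor:k}); and it does contain the cell $I_{k+1}$, which has degree $k+1$ (Lemma~\ref{lem:degree_ih}). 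Hence $D\in\mathbb{FD}_{=k+1}$, while its forests have height $k+2$, so $D\notin\mathbb{D}_{\le k+1}^-$. Thus $\mathbb{D}_{\le k}\sqcup\mathbb{FD}_{=k+1}$ strictly contains $\mathbb{D}_{\le k+1}^-$.

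What does hold, and what the paper actually uses downstream, is the identity $\mathbb{D}_{\le k+1}^- = \mathbb{D}_{\le k}\sqcup(c)$, which your argument proves completely. If you inspect the proof of Proposition~\ref{prop:gf_dkmoins}, the generating function computed there is exactly that of $(c)$: the two distinguished trees are weighted by $\mathcal{T}_{=k+1}$, and the cut is forced to intersect $R_{P_D}$. So Proposition~\ref{prop:gen_k_dcp} is unaffected. The clean repair is either to strengthen the definition of $\mathbb{FD}_{=k+1}$ by additionally requiring $P_D$ to have degree of convexity $k+1$, or simply to state and prove $\mathbb{D}_{\le k+1}^- = \mathbb{D}_{\le k}\sqcup(c)$ directly, which you have already done.
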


We just need to determine the generating functions for $\mathbb{D}_{\leq k+1}^-$ and 
$\mathbb{FD}_{=k+1}$.

\begin{proposition}
\label{prop:gf_dkplus}
For any $k\geq 1$, the generating function for $\mathbb{D}_{\le k}^-$ 
is
$$
z^2 \frac{F_{k+1}}{F_{k+2}-z F_{k}},
$$
where $z$ takes into account the semi-perimeter.
\end{proposition}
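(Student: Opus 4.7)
The plan is to apply the three-step generating function method introduced earlier in the section to the class $\mathbb{D}_{\le k}^-$, exactly as was done for the full class of directed convex polyominoes.

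For Step~1, by the very definition of $\mathbb{D}_{\le k}^-$ combined with Proposition~\ref{prop:directed}, the polyominoes in this class are in bijection with triplets $(F_e, F_s, \lambda)$ in which $F_e$ and $F_s$ are ordered forests whose constituent trees all have height at most $k$, and $\lambda$ is an unconstrained cut. No constraint is imposed on $\lambda$ beyond being a monotone path starting with an east step and ending with a south step.

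For Step~2, the generating function marking a single ordered tree of height at most $k$ by size is the classical formula of de Bruijn, Knuth and Rice recalled in Section~2, namely $\mathcal{T}_{\le k} = \frac{z F_k}{F_{k+1}}$. The generating function for the cut is identical to the one used in the unconstrained directed convex case, that is,
$$\mathcal{G}(s,e,z_s,z_e) = \frac{z_s z_e}{1-(s+e)},$$
where $z_e$ and $z_s$ mark the first east and the last south step of $\lambda$ (the two steps which are not associated to any tree under the bijection), while $e$ and $s$ mark the remaining east and south steps, each of which carries a tree of $F_e$ or $F_s$ respectively.

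For Step~3, I would perform the composition by substituting $\mathcal{T}_{\le k}$ for both $e$ and $s$, and $z$ for both $z_e$ and $z_s$, obtaining
$$\mathcal{G}(\mathcal{T}_{\le k}, \mathcal{T}_{\le k}, z, z) \;=\; \frac{z^2}{1 - 2\,\mathcal{T}_{\le k}} \;=\; \frac{z^2 F_{k+1}}{F_{k+1} - 2 z F_k}.$$
To reach the form stated in the proposition, the final step is a one-line algebraic simplification using the Fibonacci polynomial recurrence $F_{k+2} = F_{k+1} - z F_k$, which immediately gives $F_{k+1} - 2 z F_k = F_{k+2} - z F_k$. There is no genuine obstacle here: the method has been tailored for precisely this situation, and the only verification needed is that the forests on the two sides are indeed unconstrained apart from the height bound, so that $\mathcal{T}_{\le k}$ may be freely plugged in wherever the unconstrained tree generating function appeared in the classical example.
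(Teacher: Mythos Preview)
Your proof is correct and follows exactly the same three-step approach as the paper, which likewise plugs $\mathcal{T}_{\le k}$ into the cut generating function $\mathcal{G}(e,s,z_e,z_s)=\frac{z_e z_s}{1-(e+s)}$; you simply make the final Fibonacci-recurrence simplification explicit where the paper leaves it implicit. One minor slip: under the paper's bijection it is the \emph{last} east step and the \emph{first} south step of $\lambda$ that carry no tree (not the first east and last south as you wrote), but this has no effect on the computation since both variables are replaced by $z$.
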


\begin{proof}
The generating function for the cut is $\mathcal{G}(e,s,z_e,z_s) = \frac{z_e\,z_s}{1-(s+e)}$.
The generating function for the trees of the forests $F_e$ and $F_s$ is $\mathcal{T}_{\leq k} = \frac{z F_{k}}{F_{k+1}}$.
So the desired generating function is $\mathcal{G}( \mathcal{T}_{\leq k}, \mathcal{T}_{\leq k}, z, z )$.
\end{proof}

\begin{proposition}
\label{prop:gf_dkmoins}
For any $k \geq 1$, the generating function for $\mathbb{FD}_{=k+1}$ according to the semi-perimeter is
$$
z^2
\left(\frac{z^{k+1}}{F_{2k+3}}\right)^2
\frac{
	F_{k+2}
}{
	( F_{k+3} - z F_{k+1} )
}. 
$$
\end{proposition}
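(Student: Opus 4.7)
The plan is to apply the three-step method introduced at the beginning of the previous section, but now to the class $\mathbb{FD}_{=k+1}$ of flat directed convex polyominoes with degree of convexity exactly $k+1$. For Step~1, by combining Proposition~\ref{prop:minimal_convexity_PP} with Proposition~\ref{prop:charactkdir}, an element $D$ of $\mathbb{FD}_{=k+1}$ corresponds via the bijection $\Phi$ to a triplet $(F_e,F_s,\lambda)$ where both $F_e$ and $F_s$ have height exactly $k+1$ (ensuring $P_D$ is flat with degree of convexity $k+1$) and the cut $\lambda$ is not contained in $\lambda_{R_{P_D}}$ (ensuring $R_{P_D}\cap D\neq\emptyset$, so the degree of convexity of $D$ remains $k+1$ rather than dropping to~$k$).

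For Step~2, the key combinatorial input is the chain decomposition underlying Lemma~\ref{prop:equivalence_height}: the bounce paths $r$ and $u$ of $P_D$ determine two chains $d_r$ and $d_u$ of common length $k+1$, one lying in each of $F_e$ and $F_s$. These chains serve as the backbones around which the two forests are organized, and they also pin down the position and dimensions of $R_{P_D}$. Mimicking the argument that gave $\mathcal{T}_{=k}=\frac{z^k}{F_k F_{k+1}}$ in Section~2 (splitting a tree of height $k$ along its maximal chain into a sequence of subtrees of controlled heights), I would show that a forest of height $k+1$ equipped with a designated chain of length $k+1$ contributes a factor $\frac{z^{k+1}}{F_{2k+3}}$ to the joint generating function. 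The remaining body of $P_D$ outside the two chains, together with the cut crossing $R_{P_D}$, would then contribute the factor $\frac{F_{k+2}}{F_{k+3}-zF_{k+1}}$.

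For Step~3, composing these pieces and including the $z^2$ contribution from the first east step and last south step of the cut yields the stated formula. The appearance of $F_{2k+3}$ rests on the Fibonacci polynomial identity $F_{2k+3}=F_{k+2}^2-zF_{k+1}^2$, which can be proved by induction from the defining recurrence and captures how the two chains (one per forest) combine.

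The main obstacle will be Step~2: since $R_{P_D}$ depends on the fine structure of both forests, the cut constraint does not factor independently from the forest data. Overcoming this requires using $d_r$ and $d_u$ as joint anchors so that, once the two chains are fixed, the remaining forest data and the admissible cuts split into pieces that can be enumerated independently and then recombined via the Fibonacci identity above.
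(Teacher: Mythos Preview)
Your Step~1 characterization is correct as far as it goes: $D\in\mathbb{FD}_{=k+1}$ iff $h(F_e)=h(F_s)=k+1$ and $\lambda\not\subseteq\lambda_{R_{P_D}}$. But the factorization you propose in Step~2 does not match the combinatorics, and this is where the argument breaks down. The factor $\frac{z^{k+1}}{F_{2k+3}}$ does \emph{not} come from ``a forest of height $k+1$ equipped with a designated chain of length $k+1$''; in the actual computation it arises as $\frac{\mathcal{T}_{=k+1}}{1-\mathcal{T}_{\le k}-\mathcal{T}_{\le k+1}}$, which is a single tree of height exactly $k+1$ together with an alternating sequence of trees from \emph{both} forests, some of height $\le k$ and some of height $\le k+1$. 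In other words, each occurrence of $F_{2k+3}$ mixes contributions from $F_e$ and $F_s$; it is not a per-forest quantity. Your plan to anchor on $d_r$ and $d_u$ separately and then recombine via $F_{2k+3}=F_{k+2}^2-zF_{k+1}^2$ therefore does not produce the right objects.

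The missing idea is that the decomposition should run along the \emph{cut}, not along the two forests. The cut has two distinguished steps $u_e$ and $u_s$ sitting in the column and row of $I_k$, and the height bound on a tree depends on where its root lies relative to these markers: trees rooted before $u_e$ (in $F_e$) have height $\le k$, the tree at $u_e$ has height exactly $k+1$, trees rooted after $u_e$ have height $\le k+1$, and symmetrically for $F_s$. Writing $\lambda=p_l\cdot u_e\cdot\tau\cdot u_s\cdot p_r$ with these position-dependent constraints, the condition $R_{P_D}\cap D\neq\varnothing$ becomes simply ``$\tau$ contains the factor $e_2s_2$'', which is a local regular-language condition. This is what makes the generating function factor; your forest-by-forest approach never isolates the cut constraint in a tractable form.
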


\begin{proof}
We will detail the 3 steps of our method:

\noindent \textbf{Step 1:}
Let $D$ be a flat directed polyomino with degree of convexity equal to $k$. 
Let us describe the trees of $F_e$ and $F_s$ of $P_D$.
From Lemma~\ref{lem:rectangle} we know that $R_{P_D}$ is a non empty rectangle and 
$I_k = E$.
Let $R$ (resp. $C$) be the row (resp. column) containing the cell $I_{k-1}$.
By construction of $\Phi$, the roots of $F_e$ (resp. $F_s$) are associated 
with the cells of the topmost (resp. rightmost) row (resp. column) of $P_D$.
Hence, the height of the trees depends on the position of their roots:

\begin{itemize}
	\item if the root is on the left (resp. below) of $C$ (resp. $R$) the height of the tree is less than or equal to $k$;
	\item if the root belongs to $C$ (resp. $R$), the height is exactly equal to $k+1$ (the two bounce 
	      paths have $k+1$ changes of direction and are the image through $\phi^{-1}$ of chains $d_r$ and $d_u$
	      of $F_e$ and $F_s$);
	\item if the root is on the right (resp. above) of $C$ (resp. $R$), the height is less than equal to $k+1$.
\end{itemize}

Let us now give a characterization of the cuts.
In the cuts, we will label the east steps by $e_1$ (resp. $e_2$) if they are mapped onto trees with height less than or equal to 
$k$ (resp. $k+1$). Similarly, we will label the south steps by $s_1$ (resp. $s_2$) if they are mapped onto trees with height less than or equal to $k$ (resp. $k+1$). Moreover, the first south (resp. last east) step of the cut
is labelled $s_2$ (resp. $e_2$). Since these two steps are not mapped onto a tree, at the end
of the process, we will obtain a bad generating function $\mathcal{G}'$ for the cut. The correct generating function is then obtained by multiplying $\mathcal{G}'$ by $\frac{z_e z_s}{e_2 s_2}$. We can use this trick because all the cuts have 
at least one south step and one east step.
Since $D$ is flat, the cut contains two special steps $u_e$ and $u_s$ in 
the columns $C$ and $R$, respectively. As $D$ is exactly $k+1$-convex,
from Corollary \ref{cor:k} we have that  
$R_{P_D} \cap D$ is non empty.
We also observe that in the cut $u_e$ precedes $u_s$.
Let $p_l$, $\tau$ and  $p_r$ be 3 paths such that 
$p_l \cdot u_e \cdot \tau \cdot u_s \cdot p_r$ is the cut $\lambda$.
\begin{figure}
\centering
\includegraphics[width=.5\linewidth]{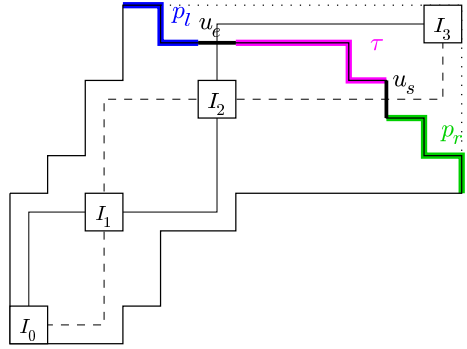} 
\caption{A directed convex polyomino $D$ where the paths $p_l$, $\tau$ and $p_r$ of $\lambda_D$ are highlighted.}\label{fig:fd}
\end{figure}
The path $p_l$ can be empty or it starts with an east step followed by any 
sequence of east/south steps.
The east steps are mapped onto trees having height less then or equal to $k$, so they are labelled by $e_1$.
The south steps are mapped onto trees having height less then or equal to $k+1$, so they are labelled by $s_2$. 
So, an unambiguous regular expression describing $p_l$ is $1 + e_1 (e_1 + s_2)^*$.
For similar argument, a regular expression for $p_r$ is 
$1+s_1 (s_1+e_2)^*$.
Consider now $\tau$: all the south and east steps are mapped onto trees with  
height less then or equal to $k+1$, so $\tau$ is a word in $e_2$ and $s_2$. 
Previously we have seen that there is at least one cell 
in $R_{P_D} \cap D$, so $\tau$ should contain the sub-word $e_2\,s_2$. An unambiguous regular expression for $\tau$ is 
$s_2^* e_2 (e_2+s_2)^* s_2 e_2^*$.
Now we just have to remark that the steps $u_e$ and $u_s$ are mapped onto 
trees having an height exactly equal to $k+1$. 

\smallskip

\noindent \textbf{Step 2:}
The generating function $\mathcal{G}(e_1, s_1, e_2, s_2, u_e, u_s, z_e. z_s)$ for the cut
$p_l \cdot u_e \cdot \tau \cdot u_s \cdot p_r$  is 
$$
\frac{z_e z_s}{e_2 s_2}
\,
\left[
	\left( 1 + \frac{e_1}{1 - e_1 - s_2} \right)
	\,
	u_e
	\,
	\frac{e_2 s_2}{ (1 - e_2) \cdot (1 - e_2 - s_2 ) \cdot (1 - s_2)}
	\,
	u_s
	\,
	\left( 1 + \frac{s_1}{1 - s_1 - e_2} \right)
\right]
,
$$
which is equal to
$
\frac{1}{1-e_1-s_2} \, \frac{z_e z_s u_e u_s}{1-e_2-s_2} \, \frac{1}{1-s_1-e_2}.
$
The generating function for the trees associated with $s_1$ and $e_1$ (resp. $s_2$ and $e_2$) is $\mathcal{T}_{\le k}$ (resp. $\mathcal{T}_{\le k+1}$), and 
the generating function for the trees associated with $u_e$ and $u_s$ is $\mathcal{T}_{= k+1}$.

\smallskip

\noindent \textbf{Step 3:}
The final generating function is 
$
\mathcal{G}(
	\mathcal{T}_{\le k}, \mathcal{T}_{\le k}, 
	\mathcal{T}_{\le k+1}, \mathcal{T}_{\le k+1}, 
	\mathcal{T}_{=k+1}, \mathcal{T}_{=k+1}, 
	z, z 
)
$ and is equal to \\
$
\left(
	\frac{\mathcal{T}_{=k+1}}{1 - \mathcal{T}_{\le k} - \mathcal{T}_{\le k+1}}
\right)^2
\,
\frac{
	z^2
}{
	1 - 2 \mathcal{T}_{\le k+1}
}
=
z^2
\,
\left(\frac{z^{k+1}}{F_{2k+3}}\right)^2
\,
\frac{
	F_{k+2}
}{
	( F_{k+3} - z F_{k+1} )
}
.
$
\end{proof}

Before calculating the generating function of directed $k$-convex polyominoes,
we need the following lemma.

\begin{lemma}\label{lem:propF}
Let $k \ge 1$ and $m \ge 1$, then the following property hold :  
$
F_{k+m-1} = F_k F_m - z F_{k-1} F_{m-1}.
$
\end{lemma}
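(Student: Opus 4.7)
The identity has a Fibonacci-style flavor, so the natural approach is induction on $m$, keeping $k$ fixed (and using the defining recurrence $F_{n+2} = F_{n+1} - z F_n$ on the left-hand side). The plan is the following.

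First, I would dispatch the base cases $m = 1$ and $m = 2$ by direct computation. For $m = 1$ the right-hand side collapses to $F_k F_1 - z F_{k-1} F_0 = F_k$ since $F_0 = 0$ and $F_1 = 1$, matching $F_{k+m-1} = F_k$. For $m = 2$ the identity reads $F_{k+1} = F_k F_2 - z F_{k-1} F_1 = F_k - z F_{k-1}$, which is exactly the defining recurrence of the Fibonacci polynomials (with indices shifted by one). These two base cases are what a two-step induction on $m$ will require.

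Next, for the inductive step, suppose the claim holds for $m-1$ and for $m$ (with the given fixed $k \ge 1$). Apply the defining recurrence to the left-hand side of the instance $m+1$:
\[
F_{k+m} \;=\; F_{k+m-1} \;-\; z\, F_{k+m-2}.
\]
Then substitute the two inductive hypotheses
$F_{k+m-1} = F_k F_m - z F_{k-1} F_{m-1}$ and $F_{k+m-2} = F_k F_{m-1} - z F_{k-1} F_{m-2}$,
factor the terms involving $F_k$ and $F_{k-1}$, and recognize the two resulting brackets as $F_m - z F_{m-1} = F_{m+1}$ and $F_{m-1} - z F_{m-2} = F_m$ by the recurrence. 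This yields
\[
F_{k+m} \;=\; F_k F_{m+1} \;-\; z\, F_{k-1} F_m,
\]
which is precisely the identity at $m+1$ and closes the induction.

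No serious obstacle is foreseen: the argument is entirely formal once the two base cases are in place. The only minor care needed is to set up a two-step induction (because the recurrence that drives the reduction of $F_{k+m}$ involves both $F_{k+m-1}$ and $F_{k+m-2}$), and to keep the algebraic manipulations symmetric in the two factors $F_k, F_{k-1}$ so that the collapse via the Fibonacci recurrence on $F_m, F_{m-1}, F_{m-2}$ is transparent.
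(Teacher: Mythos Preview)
Your proposal is correct and is essentially identical to the paper's own proof: the paper also checks the base cases $m=1$ and $m=2$ and then runs the same two-step induction on $m$, expanding $F_{k+m}=F_{k+m-1}-zF_{k+m-2}$, substituting the two inductive hypotheses, and factoring to recover $F_kF_{m+1}-zF_{k-1}F_m$.
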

\begin{proof}
We prove this property by induction.
\begin{description}
	\item[basis:] For $m=1$, as $F_1 = 1$ and $F_0=0$, we obtain $F_{k+1-1} = F_k.F_1 - z F_{k-1}.F_{0}$.
	
	For $m=2$, as $F_2=1$, we obtain $F_{k+2-1} = F_k - z F_{k-1} = F_k F_2 - z F_{k-1} F_{1}$.

	\item[inductive step:] We suppose that it holds for $h\leq m$ and we prove that it also holds for $m+1$.
	$$
	\begin{array}{rcl}
 		F_{k+(m+1)-1}
	&
		=
	&
		F_{k+m-1} - z F_{k+m-2}
	\\
	&
		=
	&
		( F_k F_m - z F_{k-1} F_{m-1} )    -   z ( F_{k} F_{m-1} -z F_{k-1} F_{m-2} )
	\\
	&
		=
	&
		F_k ( F_m - z F_{m-1} ) - z F_{k-1} ( F_{m-1} - z F_{m-2} )
	\\
	&
		=
	&
		F_k F_{m+1} - z F_{k-1} F_{m}.
	\end{array}
	$$ 
\end{description}
\end{proof}

\begin{figure}[htb]
\begin{center}
\includegraphics[width=.45\textwidth]{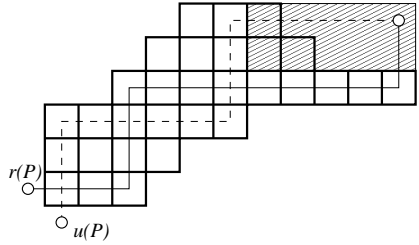}
\end{center}
\caption{A directed convex polyomino $D$, where the rectangle $R_{P_D}$ has been highlighted. \label{fig:case1}}
\end{figure}


\begin{proposition}
\label{prop:gen_k_dcp}
For $k=0$ the generating function for directed $k$-convex polyominoes with respect the semi-perimeter is  $$\mathcal{D}_0=\frac{z^2\, (1+z)}{1-z},$$ while for any $k\geq 1$, the generating function for directed $k$-convex polyominoes according to the semi-perimeter is  
$$
\mathcal{D}_{\leq k}=z^2\,  \left( \frac{F_{k+2}}{F_{2k+3}} \right)^2\,  F_{2k+2}.
$$
\end{proposition}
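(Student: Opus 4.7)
The plan is to derive the formula for $k\geq 1$ by combining Proposition~\ref{prop:include_dk} with the two generating functions provided in Propositions~\ref{prop:gf_dkplus} and~\ref{prop:gf_dkmoins}, and then simplifying through Lemma~\ref{lem:propF} and Proposition~\ref{prop:two_way_fk}. The $k=0$ case must be treated separately: a directed $0$-convex polyomino admits a path between any two cells with no change of direction, so it is either a single cell, a single horizontal bar or a single vertical bar; a direct listing yields $\mathcal{D}_0=z^2+2z^3+2z^4+\cdots=\dfrac{z^2(1+z)}{1-z}$.

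For $k\geq 1$, I would start from the disjoint union
$\mathbb{D}_{\le k+1}^- = \mathbb{D}_{\le k} \sqcup \mathbb{FD}_{= k+1}$
and rewrite it at the level of generating functions as $\mathcal{D}_{\leq k}=\mathcal{D}_{\leq k+1}^- - \mathcal{FD}_{=k+1}$. Plugging in the two explicit expressions from Propositions~\ref{prop:gf_dkplus} and~\ref{prop:gf_dkmoins} gives
\[
\mathcal{D}_{\leq k}\;=\;\frac{z^2 F_{k+2}}{F_{k+3}-zF_{k+1}}\Bigl(1-\frac{z^{2k+2}}{F_{2k+3}^{\,2}}\Bigr)\;=\;\frac{z^2 F_{k+2}}{F_{k+3}-zF_{k+1}}\cdot\frac{F_{2k+3}^{\,2}-z^{2k+2}}{F_{2k+3}^{\,2}}.
\]
So everything reduces to showing that this product collapses to $z^2\bigl(F_{k+2}/F_{2k+3}\bigr)^2 F_{2k+2}$.

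The key simplifications are two identities on Fibonacci polynomials. First, applying Lemma~\ref{lem:propF} with parameters chosen so that the two indices sum to $2k+5$, namely $F_{(k+3)+(k+2)-1}=F_{k+3}F_{k+2}-zF_{k+2}F_{k+1}$, yields
\[
F_{2k+4}\;=\;F_{k+2}\bigl(F_{k+3}-zF_{k+1}\bigr),
\]
so the awkward denominator $F_{k+3}-zF_{k+1}$ rewrites as $F_{2k+4}/F_{k+2}$. Second, Proposition~\ref{prop:two_way_fk} applied at index $2k+3$ gives $F_{2k+3}^{\,2}-z^{2k+2}=F_{2k+4}F_{2k+2}$. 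Substituting both identities, the factor $F_{2k+4}$ cancels and what remains is exactly $z^2\bigl(F_{k+2}/F_{2k+3}\bigr)^2 F_{2k+2}$.

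The main obstacle is really just recognizing which instances of the two Fibonacci identities to invoke: both $F_{k+3}-zF_{k+1}$ and $F_{2k+3}^{\,2}-z^{2k+2}$ look at first sight like expressions that have no reason to factor, and the whole proof hinges on seeing that Lemma~\ref{lem:propF} turns the former into $F_{2k+4}/F_{k+2}$ and that the latter is exactly the shape produced by Proposition~\ref{prop:two_way_fk}. Once the two rewritings are found, the cancellation is immediate and no further calculation is required.
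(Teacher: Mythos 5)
Your proof is correct and follows essentially the same route as the paper: the $k=0$ case by direct enumeration of bars, and for $k\geq 1$ the subtraction $\mathcal{D}_{\leq k}=\mathcal{D}_{\leq k+1}^- - \mathcal{FD}_{=k+1}$ followed by exactly the two rewritings the paper uses, namely $F_{2k+3}^{\,2}-z^{2k+2}=F_{2k+4}F_{2k+2}$ from Proposition~\ref{prop:two_way_fk} and $F_{2k+4}=F_{k+2}(F_{k+3}-zF_{k+1})$ from Lemma~\ref{lem:propF}. Nothing is missing.
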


\begin{proof}
We just observe that directed $0$-convex polyominoes are precisely vertical and horizontal bars. For $k\geq1$, the result can be obtained as the difference of the generating functions of $\mathbb{D}_{\leq k+1}^-$ and $\mathbb{FD}_{=k+1}.$We know that $D_k = \mathbb{D}_{k+1}^+ \setminus \mathbb{D}_{k+1}^-$.
The generating function $\mathbb{D}_{\leq k+1}^-$ is 
$z^2\, \frac{F_{k+2}}{F_{k+3}-z F_{k+1}}$ (cf. Proposition~\ref{prop:gf_dkplus}) and the generating function $\mathbb{FD}_{=k+1}$ is $
z^2\, \left(\frac{z^{k+1}}{F_{2k+3}}\right)^2\,
\frac{
	F_{k+2}
}{
	( F_{k+3} - z F_{k+1} )
}
$
(cf. Proposition~\ref{prop:gf_dkmoins}).
So the generating function $\mathcal{D}_{\leq k}$ is equal to 
$$
z^2 \,\left(
	1 - 
	\left(
		\frac{ z^{k+1} }{ F_{2k+3} }
	\right)^2
\right)\,
\frac{
	F_{k+2}
}{
	( F_{k+3} - z F_{k+1} )
}
=
z^2\,
\frac{
	(F_{2k+3})^2 - z^{2k+2}
}{
	(F_{2k+3})^2
}\,
\frac{ F_{k+2} }{ F_{k+3}-zF_{k+1} }.
$$
From Proposition~\ref{prop:two_way_fk} we know that 
$
(F_k)^2 - F_{k+1}F_{k-1}
=
z^{k-1}
$
and so we obtain:
\begin{equation}
\label{eq:calculus}
z^2\,
\frac{
	F_{2k+4} F_{2k+2}
}{
	(F_{2k+3})^2
}\,
\frac{ F_{k+2} }{ F_{k+3}-zF_{k+1} }.
\end{equation}
Using Lemma~\ref{lem:propF} we can express $F_{2k+4}$ in the following way:
$$
F_{2k+4} = 
F_{(k+2)+(k+3)-1} = 
F_{k+2}F_{k+3} - zF_{k+1}F_{k+2} = 
F_{k+2} ( F_{k+3}-zF_{k+1} ).
$$
So we conclude that (\ref{eq:calculus}) is equal to
$$
	z^2\,
	\frac{
		F_{k+2} \cancel{( F_{k+3} - zF_{k+1} )} F_{2k+2}
	}{
		(F_{2k+3})^2
	}\,
	\frac{F_{k+2}}{\cancel{F_{k+3}-zF_{k+1}}}
	=
	z^2\,
	\left(
		\frac{ F_{k+2} }{ F_{2k+3} }
	\right)^2\,
	F_{2k+2}.
$$
\end{proof}

\subsection{Asymptotics}
\label{sec:asymptotic_behavior}

It is now interesting to study some facts about the asymptotic behavior of the sequence $(d_{n,\leq k})_{n,k}$ of directed $k$-convex polyominoes with semi-perimeter $n$.

\begin{proposition}
The number $d_{n,\leq k}$ of directed $k$-convex polyominoes with semi-perimeter $n$ grows like 
$$d_{n,\leq k} \sim C_k\cdot \left ( 4\cos^2(\pi/2k+3) \right ) ^n\cdot n \, .$$
\end{proposition}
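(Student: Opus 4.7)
The approach is classical analytic combinatorics applied to the rational generating function obtained in Proposition~\ref{prop:gen_k_dcp},
$$\mathcal{D}_{\leq k}(z) = z^2 \left(\frac{F_{k+2}}{F_{2k+3}}\right)^2 F_{2k+2}.$$
The plan is to locate the dominant singularity $z_0$, verify that it is a pole of order exactly $2$, and then read off the growth from the standard expansion: a rational function whose dominant singularity is a double pole at $z_0$ with leading part $A_k/(1-z/z_0)^2$ has coefficients asymptotic to $A_k (n+1) z_0^{-n} \sim A_k \cdot n \cdot z_0^{-n}$.

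First I would pin down $z_0$ using the closed form of the Fibonacci polynomials recalled in the preliminaries. Substituting $z = 1/(4\cos^2\theta)$ with $\theta \in (0,\pi/2)$ gives $\sqrt{1-4z} = i\tan\theta$ and, after simplifying via Euler's formula,
$$F_m(z) = \frac{\sin(m\theta)}{(2\cos\theta)^{m-1}\sin\theta}.$$
Hence $F_m$ vanishes precisely when $\theta = j\pi/m$ with $1 \leq j \leq \lfloor (m-1)/2\rfloor$, and the associated $z$-values $1/(4\cos^2(j\pi/m))$ are strictly increasing in $j$ on this range. Applying this with $m = 2k+3$, the smallest positive zero of $F_{2k+3}$ is
$$z_0 = \frac{1}{4\cos^2\bigl(\pi/(2k+3)\bigr)},$$
the candidate dominant singularity.

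Next I would verify that $z_0$ is not cancelled by the numerator, so that it is a pole of order exactly $2$. This amounts to checking $F_{k+2}(z_0)\neq 0$ and $F_{2k+2}(z_0)\neq 0$, which, by the same closed form, reduces to ruling out $\cos(\pi/(2k+3)) = \cos(j\pi/(k+2))$ and $\cos(\pi/(2k+3)) = \cos(j\pi/(2k+2))$ for integers $j \geq 1$. Using $\gcd(2k+3,k+2) = \gcd(2k+3,2k+2) = 1$, both would force $j\geq 2k+3$, outside the range where these are valid roots. Since the angles $j\pi/(2k+3)$ are distinct in $(0,\pi/2)$, the zero of $F_{2k+3}$ at $z_0$ is simple, so $F_{2k+3}^2$ has a zero of order $2$ there and $z_0$ is a pole of order exactly $2$ of $\mathcal{D}_{\leq k}$. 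The other zeros of $F_{2k+3}$ lie at $1/(4\cos^2(j\pi/(2k+3)))$ for $j\geq 2$, all strictly greater than $z_0$, so $z_0$ is the \emph{unique} dominant singularity.

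Finally, the partial fraction expansion (or equivalently the transfer theorem for meromorphic functions on a larger disk) gives, near $z_0$,
$$\mathcal{D}_{\leq k}(z) = \frac{A_k}{(1-z/z_0)^2} + O\!\left(\frac{1}{1-z/z_0}\right),$$
with $A_k = F_{k+2}(z_0)^2 F_{2k+2}(z_0)/F_{2k+3}'(z_0)^2$ a positive constant depending only on $k$. Extracting coefficients via $[z^n](1-z/z_0)^{-2} = (n+1)z_0^{-n}$ yields
$$d_{n,\leq k} \sim A_k \cdot n \cdot \bigl(4\cos^2(\pi/(2k+3))\bigr)^n,$$
with $C_k = A_k$, as claimed. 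The only real obstacle is clerical: giving $C_k$ in fully closed form requires computing $F_{2k+3}'(z_0)$, which via the trigonometric representation of $F_m$ reduces to a direct but tedious chain-rule calculation in $\theta$; since only the order of growth is asserted in the statement, this computation may be left implicit.
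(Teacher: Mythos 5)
Your proof is correct and follows essentially the same route as the paper: identify $z_0 = 1/(4\cos^2(\pi/(2k+3)))$ as the smallest root of $F_{2k+3}$ via the trigonometric form of the Fibonacci polynomials, and read off the $n\cdot\mu(k)^n$ growth from the double pole of the rational generating function. You are in fact somewhat more careful than the paper, which cites the root locations from de Bruijn--Knuth--Rice and simply asserts the double pole, whereas you derive the roots from the closed form and explicitly check that $F_{k+2}$ and $F_{2k+2}$ do not vanish at $z_0$.
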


\begin{proof}
Since $\mathcal{D}_{\leq k}(z)=z^2\,\frac{(F_{k+2})^2\,(F_{2k+2})}{(F_{2k+3})^2}$ is a rational function, it is known that the asymptotic form of the coefficients is
$$d_{n,\leq k}=[z^n]\mathcal{D}_{\leq k}(z)\sim C_k\cdot\mu(k)^n\cdot n$$
where $C_k$ is a $k$-dependent constant, $\mu(k)$ is given by $1/d_{2k+3}$, where $d_{2k+3}$ is the smallest real root of $F_{2k+3}$. The fact that there is a double pole in $\mathcal{D}_{\leq k}$ is responsible for the factor $n$.
In \cite{BruKnuRic} the authors observe that the roots of $F_k(z)=0$ are $(4\cos^2(j\pi/k))^{-1}$, for $1\leq j<k/2.$
In particular the reciprocals of the roots of $F_{2k+3}(z)=0$ are $(4\cos^2(j\pi/2k+3))$, for $1\leq j\leq k+1$. With basic calculus one can easily prove that the biggest reciprocal occurs for $j=1$, and so $\mu(k)=4\cos^2(\pi/2k+3)$.
\end{proof}

\begin{proposition}
Let $\mathcal{D}_{\leq k}$ and $\mathcal{D}$ be the generating functions of the directed $k$-convex polyominoes and of directed convex polyominoes with respect to the semi-perimeter, respectively. Then we have 
$$\lim_{k\rightarrow +\infty}\mathcal{D}_{\leq k}=\mathcal{D}.$$
\end{proposition}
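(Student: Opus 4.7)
The plan is to interpret $\lim_{k\to\infty}\mathcal{D}_{\leq k} = \mathcal{D}$ as convergence in the formal power series topology on $\mathbb{Q}[[z]]$, i.e.\ coefficient by coefficient. There are two natural routes, combinatorial and algebraic; I would present them in that order.

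\emph{Combinatorial route.} Every $k$-convex polyomino is automatically $(k+1)$-convex, so $\mathbb{D}_{\leq k} \subseteq \mathbb{D}_{\leq k+1}$, and by definition $\bigcup_{k\geq 0}\mathbb{D}_{\leq k}$ is the entire class of directed convex polyominoes. For each fixed $n$, only finitely many directed convex polyominoes have semi-perimeter $n$, and each of them has some finite degree of convexity; hence for $k$ larger than the maximum of these degrees, $[z^n]\mathcal{D}_{\leq k} = [z^n]\mathcal{D}$. This is exactly the claimed coefficient-wise convergence.

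\emph{Algebraic route.} Starting from Proposition~\ref{prop:gen_k_dcp} and substituting the closed form $F_m = (\phi^m - \bar\phi^m)/\sqrt{1-4z}$ with $\phi = \tfrac{1+\sqrt{1-4z}}{2}$, $\bar\phi = \tfrac{1-\sqrt{1-4z}}{2}$ and $\phi\bar\phi = z$, one factors the dominant $\phi$-power out of every Fibonacci polynomial to obtain
$$\mathcal{D}_{\leq k} \;=\; \frac{z^2}{\sqrt{1-4z}} \cdot \frac{\bigl(1-(\bar\phi/\phi)^{k+2}\bigr)^{2}\bigl(1-(\bar\phi/\phi)^{2k+2}\bigr)}{\bigl(1-(\bar\phi/\phi)^{2k+3}\bigr)^{2}}.$$
The key observation is that $\bar\phi/\phi = z/\phi^{2}$, and since $\phi = 1 + O(z)$, this ratio has valuation $1$ as a formal power series; consequently $(\bar\phi/\phi)^m$ has valuation at least $m$ and vanishes in the $z$-adic topology as $m\to\infty$. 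Each bracketed factor therefore converges to $1$, yielding $\mathcal{D}_{\leq k}\to z^{2}/\sqrt{1-4z} = \mathcal{D}$.

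The main (and essentially only) obstacle is notational: one has to agree that the symbol $\lim$ in the statement refers to convergence in $\mathbb{Q}[[z]]$. Once this is set, both routes conclude immediately; the algebraic one has the additional merit of quantifying the rate of convergence in $k$ via the valuation of $(\bar\phi/\phi)^{k}$.
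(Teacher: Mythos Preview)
Your proposal is correct. The algebraic route is essentially the paper's own argument in different clothing: the paper writes $F_k=\dfrac{1-(z\mathcal{C}^2)^k}{\mathcal{C}^k\sqrt{1-4z}}$ with $\mathcal{C}=\dfrac{1-\sqrt{1-4z}}{2z}$, and your quantity $\bar\phi/\phi$ equals precisely $z\mathcal{C}^2$, so the two closed forms and the subsequent limit are identical term for term. One small divergence: the paper justifies the limit analytically (asserting $0<(z\mathcal{C}^2)^2<1$ on the relevant domain), whereas you work in the $z$-adic topology via the valuation of $\bar\phi/\phi$; your framing is arguably cleaner and sidesteps any discussion of where the series actually converge.

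Your combinatorial route, on the other hand, is not in the paper and is genuinely more elementary: it bypasses the explicit formula for $\mathcal{D}_{\leq k}$ entirely and deduces coefficient-wise stabilisation directly from the filtration $\mathbb{D}_{\leq 0}\subseteq\mathbb{D}_{\leq 1}\subseteq\cdots$ with finite union at each semi-perimeter. This is shorter and conceptually transparent, though of course it gives no quantitative information about the rate of convergence, which the algebraic route (in either notation) does.
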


\begin{proof}
Let $\mathcal{C}(z)=\frac{1-\sqrt{1-4z}}{2z}$ be the generating function of Catalan numbers. We can prove, by induction on $k\geq 0$, that
\begin{equation}\label{eq}
F_k=\frac{1-(z\,\mathcal{C}^2(z))^k}{\mathcal{C}^k(z)\sqrt{1-4z}}.
\end{equation}
Using the equivalence (\ref{eq}) we can write 
$$\mathcal{D}_{\leq k}=z^2\,\left( \frac{1-(z\,\mathcal{C}^2(z))^{k+2}}{1-(z\,\mathcal{C}^2(z))^{2k+3}}\right)^2\left( \frac{1-(z\,\mathcal{C}^2(z))^{2(k+1)}}{\sqrt{1-4z}}\right).$$
Clearly, the following statement holds
$$\lim_{k\rightarrow +\infty} \left( \frac{1-(z\,\mathcal{C}^2(z))^{k+2}}{1-(z\,\mathcal{C}^2(z))^{2k+3}}\right)^2=1.$$
Moreover, we have that $0<(z\,\mathcal{C}^2)^2<1$ in the domain of $\mathcal{D}_{\leq k}$, then 
$$\lim_{k\rightarrow +\infty} (z\,\mathcal{C}^2(z))^{2(k+1)}=0.$$
So we conclude that
$$\lim_{k\rightarrow +\infty}\mathcal{D}_{\leq k}=\frac{z^2}{\sqrt{1-4z}}=\mathcal{D}.$$

\end{proof}

\section{Conclusions and further work}
\label{sec:perspective}

In this paper we present a general method to calculate generating
functions for different families of directed convex polyominoes
with respect to several statistics, including the degree of convexity.
This allows to solve the problem of enumerating directed $k$-convex
polyominoes, which was presented in \cite{BatFedRinSoc} as an open
problem.

Our idea is that, for any statistic on directed convex polyominoes
that can be read on
the associated forests or on the cut, our method can suitably be
applied to obtain a
generating function according to these parameters.
We believe that our method can be applied in several different enumeration problems related with directed convex polyominoes, mainly because in Proposition~\ref{prop:directed} the sets of trees and the cut are not constrained.
Moreover, Proposition~\ref{prop:directed} can be used to write out efficient
algorithms to generate directed convex polyominoes with different
combinations of
fixed constraints. As a matter of fact, we used our method to implement a code
\footnote{
An implementation of $k$-directed polyominoes in \texttt{Sage}:
\url{http://trac.sagemath.org/ticket/17178}
.
}
in the open-source software \texttt{Sage}~\cite{Sage-Combinat, sage} for the
enumeration of directed convex polyominoes.
The code will be soon available in a future sage release.

There are several guidelines for further research. One is to study the
probability
distributions of the convexity degree for directed convex polyominoes with
a fixed semi-perimeter. Moreover, directed convex polyominoes can be
seen as special kind of tree-like tableaux
and the cut represents the states of the PASEP
(cf.~\cite{avbona13} for tree-like tableaux context and definitions).
So, our aim is to generalize Proposition~\ref{prop:directed}
for tree-like tableaux.
In tree-like tableaux $T$ associated with directed convex polyominoes, the area
counts the number of some patterns in the permutation associated with $T$.
We can try to obtain some generating function counting the area statistic.

\section*{Acknowledgements}                                                               
\label{sec:ack}

The authors are grateful to Tony Guttmann and Mireille Bousquet-Mélou for 
their help in the writing process of this article.

This research was driven by computer exploration using the open-source
mathematical software \texttt{Sage}~\cite{sage} and its algebraic
combinatorics features developed by the \texttt{Sage-Combinat}
community~\cite{Sage-Combinat}.

\nocite{*}                                                                      
\bibliographystyle{abbrvnat}

\end{document}